\newcommand\org@maketitle{}
\newcommand\@authors{}
\let\org@maketitle\maketitle
\def\maketitle{%
	\let\@authors\authors
	\nxandlist{; }{ and }{; }\@authors
	\hypersetup{
		linktocpage=true,
		pdftitle={\@title},
                pdfauthor={\@authors},
                pdfsubject={\subjclassname. \@subjclass},
		pdfkeywords={\@keywords}
	}%
	\org@maketitle
}
\renewcommand{\PrintDOI}[1]{\doi{#1}}
\numberwithin{equation}{section}
\newtheorem{theorem}{Theorem}[section]
\newtheorem{lemma}[theorem]{Lemma}
\newtheorem{corollary}[theorem]{Corollary}
\newtheorem{proposition}[theorem]{Proposition}
\theoremstyle{definition}
\newtheorem{definition}[theorem]{Definition}
\newtheorem{example}[theorem]{Example}
\theoremstyle{remark}
\newtheorem{remark}[theorem]{Remark}
\newcommand{\cA}{{\mathcal A}}
\newcommand{\cJ}{{\mathcal J}}
\newcommand{\cH}{{\mathcal H}}
\newcommand{\al}{\alpha}
\newcommand{\be}{\beta}
\newcommand{\g}{\gamma}
\newcommand{\de}{\delta}
\newcommand{\la}{\lambda}
\newcommand{\ka}{\kappa}
\newcommand{\R}{\mathbb{R}}
\newcommand{\vp}{\varphi}
\newcommand{\bg}{\mathbf{g}}
\newcommand{\bff}{\mathbf{f}}
\newcommand{\D}{\nabla}
\renewcommand{\div}{\operatorname{div}}
\newcommand{\loc}{\mathrm{loc}}
\newcommand{\mean}[1]{\langle #1\rangle}
\def\Xint#1{\mathchoice
  {\XXint\displaystyle\textstyle{#1}}%
  {\XXint\textstyle\scriptstyle{#1}}%
  {\XXint\scriptstyle\scriptscriptstyle{#1}}%
  {\XXint\scriptscriptstyle\scriptscriptstyle{#1}}%
  \!\int}
\def\XXint#1#2#3{{\setbox0=\hbox{$#1{#2#3}{\int}$}
    \vcenter{\hbox{$#2#3$}}\kern-.5\wd0}}
\def\dashint{\Xint-}
\DeclareRobustCommand{\rchi}{{\mathpalette\irchi\relax}}
\newcommand{\irchi}[2]{\raisebox{\depth}{$#1\chi$}}
\mathchardef\ordinarycolon\mathcode`\:
\author{Seongmin Jeon}
\author{Stefano Vita}
\address[S. Jeon]{Department of Mathematics
\newline\indent
KTH Royal Institute of Technology
\newline\indent
SE-100 44, Stockholm, Sweden}
\email{seongmin@kth.se}
\address[S. Vita]{Dipartimento di Matematica Giuseppe Peano
\newline\indent
Universit\`a degli Studi di Torino
\newline\indent
Via Carlo Alberto 10, 10124, Torino, Italy}
\email{stefano.vita@unito.it}
\title[Higher order boundary Harnack principles in Dini type domains]{Higher order boundary Harnack principles in Dini type domains}
\subjclass[2020]{35B45, 35B65, 35J70, 35J75  }
\keywords{Degenerate or singular equations; Dini continuity; higher order boundary Harnack principle; Schauder estimates; boundary regularity}
\begin{document}

\begin{abstract}
Aim of this paper is to provide higher order boundary Harnack principles \cite{DeSSav15} for elliptic equations in divergence form under Dini type regularity assumptions on boundaries, coefficients and forcing terms. As it was proven in \cite{TerTorVit22}, the ratio $v/u$ of two solutions vanishing on a common portion $\Gamma$ of a regular boundary solves a degenerate elliptic equation whose coefficients behave as $u^2$ at $\Gamma$. Hence, for any $k\ge1$ we provide $C^k$ estimates for solutions to the auxiliary degenerate equation under double Dini conditions, actually for general powers of the weight $a>-1$, and we imply $C^k$ estimates for the ratio $v/u$ under triple Dini conditions, as a corollary in the case $a=2$.
\end{abstract}

\maketitle

\section{Introduction and main results}

For a bounded domain $\Omega\subset\R^n$, $n\ge2$, we consider a second-order elliptic operator in divergence form 
\begin{align}\label{eq:div-operator}
Lu=\div(A\D u)=\sum_{i,j=1}^n\partial_i(a_{ij}\partial_ju).
\end{align}
Here, the variable coefficient matrix $A=(a_{ij})_{i,j=1}^n$ is symmetric $A=A^T$ and satisfies the following conditions for some constant $\Lambda\ge1$
\begin{align}
\label{eq:assump-coeffi}
\begin{cases}
    \text{the uniform ellipticity: }\,\,\,\frac1\Lambda|\xi|^2\le \mean{A(x)\xi,\xi}\le\Lambda|\xi|^2,\quad\xi\in\R^n,\,\,\,x\in\Omega, \\
    \text{the uniform boundedness: }\,\,\,|A(x)|\le \Lambda,\quad x\in\Omega.
\end{cases}
\end{align}
Let us consider two functions $u,v$ solving
\begin{equation}\label{BHconditions}
\begin{cases}
-Lv=f &\mathrm{in \ }\Omega\cap B_1,\\
-Lu=g &\mathrm{in \ }\Omega\cap B_1,\\
u>0 &\mathrm{in \ }\Omega\cap B_1,\\
u=v=0, \quad \partial_{\nu} u<0&\mathrm{on \ }\partial\Omega\cap B_1,
\end{cases}
\end{equation}
where $0\in\partial\Omega$ and $\nu$ stands for the unit outward normal vector to $\Omega$ on $\partial\Omega$.

Starting from \cite{Kem72,Dal77,Anc78,Wu78}, an important field of research concerns the study of the regularity properties of the ratio $v/u$.
From one side, boundedness or even H\"older continuity of the ratio $v/u$ up to the boundary $\partial\Omega$ is known in literature as \emph{boundary Harnack principle}, and the research goes in the direction of proving the latter properties by lowering as much as possible the regularity assumptions on boundaries and coefficients, just to name a few contributions \cite{CafFabMorSal81,JerKen82,FabGarMarSal88,BanBasBur91,BasBur91,BasBur94,Fer98,DeSSav20,DeSSav22a,DeSSav22b}, or even adding right hand sides \cite{AllSha19,TorLat21}.

From the other side, the \emph{higher order boundary Harnack principle} concerns the validity of Schauder estimates for the ratio; that is, uniform continuity of the higher derivatives of $v/u$, and was proved for the first time in \cite{DeSSav15} in the elliptic case and then extended to the parabolic case in \cite{BanGar16,Kuk22}, see also \cite{DeSSav16} for the case of slit domains. 

Recently, an alternative proof of the latter result was proposed first in \cite{TerTorVit22} and then independently also in \cite{Zha23}. The new argument relies on the following observation: the ratio $w=v/u$ of two solutions of \eqref{BHconditions} solves the degenerate elliptic equation
\begin{equation}\label{e:r}
-\div\left(u^2A\nabla w\right)=u (f-gw) \quad\mathrm{in \ }\Omega\cap B_1;
\end{equation}
that is, it is a weak solution in the weighted Sobolev space $H^1(\Omega\cap B_1,u^2dx)$. Hence, the higher order boundary Harnack principle is implied by Schauder estimates up to the boundary $\partial\Omega$ for solutions to the auxiliary degenerate equation above.

Both the higher order boundary Harnack principle and the Schauder estimates for degenerate equations are nowadays well understood requiring boundaries, forcing terms and coefficients to be in $C^{k,\alpha}$ H\"older spaces (for the second topic one may refer to \cite{FabKenSer82,SirTerVit21a,SirTerVit21b,TerTorVit22}). Then, the purpose of the present work is to obtain these results when the modulus of continuity of the given data is of Dini type and hence possibly weaker than any $\alpha$-H\"older one and even not homogeneous. Far from being sharp, our results are a first attempt in lowering the requirements on the given data in order to still guarantee $C^k$ estimates for solutions to degenerate equations as in \eqref{e:r} and consequently for ratios of solutions to \eqref{BHconditions}.


\begin{definition}[$j$-Dini functions]\label{dinidef}
We say that a nonnegative function $\omega:[0,1]\to\R$ is a \emph{Dini function} ($1$-Dini) if 
\begin{itemize}
\item[$(D1)$] \emph{(modulus condition)} $\omega$ is continuous and nondecreasing, $\omega(0)=0$, $\omega(r)>0$ for $r>0$;
\item[$(D2)$] \emph{(integrability condition)} $\omega(r) r^{-1}$ is integrable; that is
$$
\int_0^1\frac{\omega(r)}r\,dr<\infty;
$$
\item[$(D3)$] \emph{($\alpha$-nonincreasing condition)} there exists $\alpha\in(0,1)$ such that $\omega(r)r^{-\alpha}$ is nonincreasing.
\end{itemize}
For any $j\in\mathbb N\setminus\{0\}$, let
\begin{equation}\label{Jomega}
\mathcal{J}^0_\omega(r)=\omega(r),\qquad\mathcal{J}^j_\omega(r):=\int_0^r\frac{\mathcal J^{j-1}_\omega(s)}s\,ds.
\end{equation}
We say that $\omega$ is $j$-Dini if satisfies $(D1),(D3)$ and $\mathcal{J}^{j-1}_\omega$ satisfies $(D2)$; that is,  $\mathcal{J}^{j}_\omega(1)<\infty$. We define $\mathcal D_j$ the set of $j$-Dini functions. 
\end{definition}

\begin{remark} We would like to remark here that for any $\alpha\in(0,1]$ condition $(D3)$ is not restrictive. In fact, given any function $\sigma$ satisfying the conditions in the previous definition apart from $(D3)$ and any $\alpha\in(0,1]$, one can construct a function $\sigma_\alpha$ (see \eqref{sigmaA}) such that the desired properties hold true together with condition $(D3)$. Moreover, without loss of generality one can work with the new modulus since $\sigma(r)\leq\sigma_\alpha(r)$. Although this fact was already remarked in \cite{ApuNaz16,ApuNaz22,DonEscKim18,DonLeeKim20} and proved in some subcases, for seek of completeness in Proposition \ref{alphaincreasing} we will give a unified proof which works in the full range $\alpha\in(0,1]$ and fits with our notion of $j$-Dini moduli. In other words, if $\sigma$ satisfies $(D1)$ and $\cJ^{j-1}_\sigma$ satisfies $(D2)$ for a given $j\in\mathbb N\setminus\{0\}$, we show that $\sigma_\alpha$ is $j$-Dini if one chooses $\alpha\in(0,1)$.
\end{remark}

For an integrable function $f$ in $\Omega$, we define nonnegative functions $\eta_f$, $\omega_f:(0,1]\to\R$ by 
\begin{align*}
&\eta_f(r):=\sup_{x\in\Omega}\sup_{y,z\in\Omega\cap B_r(x)}|f(y)-f(z)|,\\
&\omega_f(r):=\sup_{x\in\Omega}\dashint_{\Omega\cap B_r(x)}|f-\mean{f}_{\Omega\cap B_r(x)}|,
\end{align*}
where $\mean{f}_{\Omega\cap B_r(x)}=\dashint_{\Omega\cap B_r(x)}f$.

\begin{definition}[Notions of $j$-Dini continuity]
    Let $j\in\mathbb N\setminus\{0\}$. We say that a function $f\in L^1(\Omega)$ is \emph{uniformly $j$-Dini continuous} in $\Omega$ if $\eta_f$ is a $j$-Dini function. On the other hand, we say that $f$ is of \emph{$j$-Dini mean oscillation} in $\Omega$ if $\omega_f$ is a $j$-Dini function.
\end{definition}

Clearly, one has $\omega_f(r)\le \eta_f(r)$ and the Dini mean oscillation condition is weaker than the uniform Dini continuity condition.

\begin{definition}[$C^{k,\omega}$ spaces]
Let $\Omega\subset \R^n$ be a bounded domain. For $k\in\mathbb N$ and a modulus of continuity $\omega$ (i.e. a nonnegative function satisfying the modulus condition $(D1)$), we say that a function $u:\Omega\to\R$ is in $C^{k,\omega}(\Omega)$ if $u\in C^k(\overline\Omega)$ and $$
[D^\beta u]_{C^{0,\omega}(\Omega)}:=\sup_{x,y\in\Omega, x\neq y}\frac{|D^\beta u(x)-D^\beta u(y)|}{\omega(|x-y|)}<\infty
$$
for any multiindex $\beta=(\be_1,\cdots,\be_n)\in\mathbb N^n$ with $|\beta|=\sum_{i=1}^n\beta_i=k$.
Then, the norm is defined as
$$
\|u\|_{C^{k,\omega}(\Omega)}=\sum_{|\beta|\leq k}\|D^\beta u\|_{L^\infty(\Omega)}+\sum_{|\beta|=k}[D^\beta u]_{C^{0,\omega}(\Omega)}.
$$
\end{definition}
Let us remark that if $u\in C^{k,\omega}(\Omega)$ with $\omega$ a $j$-Dini function for some $j\in\mathbb N\setminus\{0\}$, then it means that $u\in C^k(\overline\Omega)$ and derivatives $D^\beta u$ of order $|\beta|=k$ are uniformly $j$-Dini continuous in $\Omega$.

\begin{definition}[$C^{k, j-Dini}$ domains]\label{diniboundary}
Let $k,j\in\mathbb N\setminus\{0\}$. We say that a bounded domain $\Omega\subset\R^n$ is $C^{k, j-Dini}$, if for each point $x^0\in\partial\Omega$, there exists $r>0$ such that after a possible rotation of coordinate axes, one has $$
    B_r(x^0)\cap\Omega=B_r(x^0)\cap\{x_n>\g(x')\}
    $$
    where $x'=(x_1,\cdots,x_{n-1})$, for a certain $C^{k,\omega}$ function $\g:\R^{n-1}\to\R$ for some $j$-Dini function $\omega$.
\end{definition}

Thanks to the previous definitions, we are now in position to state our results. Since we are interested in local estimates, up to rotations and dilations, we can locally parametrize the domain where $u,v$ solve \eqref{BHconditions} as the epigraph of a function $\g$; that is
$$\Omega\cap B_1=\{x_n>\g(x')\}\cap B_1,\qquad \partial\Omega\cap B_1=\{x_n=\g(x')\}\cap B_1.$$
Moreover, $\nu$ stands for the unit outward normal vector to $\Omega$ on $\Gamma:=\{x_n=\g(x')\}$.

\begin{theorem}[Higher order boundary Harnack principle in Dini type domains]\label{thm:Dini-bdry-harnack}
Let $k\in\mathbb N$, $j\in\mathbb N\setminus\{0,1,2\}$, and $\omega$ be a $j$-Dini function. Let us consider two functions $u,v\in H^1(\Omega\cap B_1)$ solving \eqref{BHconditions} with $L$ as in \eqref{eq:div-operator} and $A$ symmetric and satisfying \eqref{eq:assump-coeffi}. Let us assume that $A,f,g\in C^{k,\omega}(\Omega\cap B_1)$ and $\gamma\in C^{k+1,\omega}(B'_1)$. Then, $w=v/u$ belongs to $C^{k+1,\sigma}_\loc(\overline{\Omega}\cap B_1)$ where $\sigma$ is $(j-3)$-Dini, and satisfies the following boundary condition
\begin{equation}\label{boundaryHOBH}
2\mean{\nabla u,\nu}\mean{A\nabla w,\nu}+f-gw=0\qquad\mathrm{on \ } \partial\Omega\cap B_1.
\end{equation}
Moreover, if $\|A\|_{C^{k,\omega}(\Omega\cap B_{1})}+\|\gamma\|_{C^{k+1,\omega}(B_1')}+\|g\|_{C^{k,\omega}(\Omega\cap B_{1})}\le L_1, \|u\|_{L^2(\Omega\cap B_{1})}\le L_2$ and $\inf_{\partial\Omega\cap B_{3/4}}|\partial_{\nu}u|\ge L_3>0$, then the following estimate holds true
\begin{equation*}\label{eq.daperfez}
\left\|\frac{v}{u}\right\|_{C^{k+1,\sigma}(\Omega\cap B_{1/2})}\le C\left(\left\|v\right\|_{L^2(\Omega\cap B_{1})}+\|f\|_{C^{k,\omega}(\Omega\cap B_{1})}\right)
\end{equation*}
for any $u,v\in H^1(\Omega\cap B_1)$ satisfying \eqref{BHconditions} and with a positive constant $C$ depending on $n$, $\Lambda$, $\omega$, $k$, $L_1$, $L_2$, $L_3$. Finally, if $u(\vec e_n/2)=1$ and $v>0$ in $\Omega\cap B_{1}$, then
\begin{equation*}
\left\|\frac{v}{u}\right\|_{C^{k+1,\sigma}(\Omega\cap B_{1/2})}\le C\left(\left|\frac{v}{u}(\vec e_n/2)\right|+\|f\|_{C^{k,\omega}(\Omega\cap B_{1})}\right)
\end{equation*}
with a positive constant $C$ depending only on $n$, $\Lambda$, $\omega$, $k$, $L_1$. 
\end{theorem}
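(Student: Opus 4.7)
The strategy is the one pioneered in \cite{TerTorVit22}: since $w = v/u$ satisfies the degenerate elliptic equation \eqref{e:r} with weight $u^2$, the theorem will be deduced as the $a = 2$ specialization of the main Schauder estimate of the paper for the auxiliary equation $-\div(u^a A \D w) = u^{a-1}(f - gw)$ announced in the abstract. The work here is to reduce to that setting and to track the Dini losses carefully so as to exhibit the modulus $\sigma$.

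First, I would flatten the boundary via a $C^{k+1,\omega}$ diffeomorphism $\Phi$ that maps $\Omega \cap B_1$ onto a half-ball and $\Gamma \cap B_1$ onto a portion of $\{x_n = 0\}$. Since $\gamma \in C^{k+1,\omega}$, the change of variables preserves the divergence form structure with transformed matrix $\tilde A \in C^{k,\omega}$ and forcing terms of class $C^{k,\omega}$, reducing the problem to a flat domain with $j$-Dini data of order $k$.

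Next, I would establish the regularity of $u$ itself via an up-to-the-boundary Dini--Schauder estimate for the nondegenerate equation $-Lu = g$ with zero Dirichlet datum. Combined with the Hopf-type assumption $|\partial_{\nu}u| \ge L_3 > 0$, this yields $u \in C^{k+1,\cJ^1_\omega}$ with $u/x_n$ bounded below by a positive constant near the flat face, at the cost of one Dini integration. Consequently the weight $u^2$ is comparable to $x_n^2$ and falls within the class treated by the weighted Schauder estimate for $a=2$, while $u^2 A$ and the right-hand side $u(f - gw)$ in \eqref{e:r} have the required Dini regularity. The algebraic boundary identity \eqref{boundaryHOBH} follows by expanding
\[
-L(uw) = -w\,Lu - 2\mean{A \D u, \D w} - u\,Lw = f,
\]
restricting to $\Gamma$, and using $u = 0$, $-Lu = g$, $\D u = \mean{\D u, \nu}\nu$ there, together with the symmetry of $A$.

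Applying the main weighted Schauder estimate of the paper to \eqref{e:r} with $a=2$ then delivers $w \in C^{k+1,\sigma}_\loc$ with $\sigma$ a $(j-3)$-Dini modulus: three Dini integrations are consumed, one in the preliminary regularity of $u$ needed to treat $u^2 A$ and $u(f - gw)$ as coefficients of the appropriate Dini class, and two inherent in the weighted Schauder estimate itself (the double Dini loss announced in the abstract). The quantitative bounds follow from the linear dependence of the estimate on $v$ and $f$ together with the explicit dependence of the constants on $L_1, L_2, L_3$; to replace $\|v\|_{L^2}$ by $|(v/u)(\vec e_n/2)|$ when $v > 0$ one invokes the standard (zeroth-order) boundary Harnack principle for non-negative solutions. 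The main obstacle is therefore concentrated in the auxiliary weighted Schauder estimate itself; once it is in place, the theorem follows by the above reduction and routine bookkeeping.
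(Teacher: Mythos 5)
Your proposal follows essentially the same route as the paper: flatten the boundary, obtain $u\in C^{k+1,\sigma}$ with $\sigma$ $(j-1)$-Dini from the uniformly elliptic Dini--Schauder theory, use the Hopf condition to get $\tilde u/x_n\ge\mu>0$, rewrite \eqref{e:r} as a degenerate equation with weight exactly $x_n^2$ and modified elliptic matrix $(\tilde u/x_n)^2\tilde A$, and apply Theorem~\ref{teok} with $a=2$, with the correct $1+2=3$ accounting of Dini integrations. The only points you gloss over (and which the paper treats explicitly) are the verification that $\tilde w$ actually lies in $H^{1,2}(B_r^+)$ -- nontrivial since the Meyers--Serrin property fails for the superdegenerate weight, and handled via a Hardy inequality and cut-off approximation -- and the conversion of the right-hand side $x_n^2\overline f/x_n$ into the divergence form $\div(x_n^2\bg)$ required by Theorem~\ref{teok}.
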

Notice that in the statement above $\sigma$ is a $0$-Dini function when $j=3$. As a convention, $0$-Dini means a modulus of continuity satisfying $(D1),(D3)$ and with no integrability assumptions.

Let us now consider two functions $u,v\in H^1(B_1)$ solving $Lu=Lv=0$ in $B_1$ and sharing their zero sets; that is, $Z(u)\subseteq Z(v)$ where $Z(u):=u^{-1}(\{0\})$. From now on, we will refer to solutions to $Lu=0$ as $L$-harmonic functions.

The study of local regularity of the ratio $w=v/u$ across $Z(u)$ is called \emph{boundary Harnack principle on nodal domains}, and was initiated in \cite{LogMal15,LogMal16} in case of harmonic functions sharing zero sets. In this situation, the ratio $w$ is regular as much as $u$ and $v$ are; that is, real analytic. In case of variable coefficients, solutions are not necessarily analytic and then the presence of the free boundary $Z(u)$ deeply affects the analysis. Some partial regularity results, in case the variable coefficients belong to $C^{k,\alpha}$ spaces, are contained in \cite{LinLin22,TerTorVit22}.

Let us assume that the variable coefficients $A$ are symmetric, satisfy \eqref{eq:assump-coeffi} and belong to $C^{k,\omega}(B_1)$ for some $k\in\mathbb N$, $\omega$ a $j$-Dini function with $j\ge 3$. Then the previous result implies Schauder estimates for the ratio $w=v/u$ across the regular set $R(u):=\{x\in Z(u) \, : \, |\nabla u(x)|\neq0\}$. The singular set is denoted by $S(u):=\{x\in Z(u) \, : \, |\nabla u(x)|=0\}$. Let us remark here that the Dini assumptions we make on coefficients ensure local $C^{k+1,\sigma}$ regularity of solutions $u$, with $\sigma$ a $(j-1)$-Dini function, see Corollary \ref{cor:reg:k}, and Dini's implicit function Theorem implies local $C^{k+1,\sigma}$ regularity of the $(n-1)$-dimensional hypersurface $R(u)$. Then, applying Theorem \ref{thm:Dini-bdry-harnack} from both sides of $R(u)$ together with a gluing lemma, one can get

\begin{corollary}[Schauder estimates for the ratio across the regular set]\label{schauderR(u)}
Let $k\in\mathbb N$, $j\in\mathbb N\setminus\{0,1,2\}$, and $\omega$ be a $j$-Dini function. Let us consider two $L$-harmonic functions $u,v\in H^1(B_1)$ with $L$ as in \eqref{eq:div-operator} and $A$ symmetric and satisfying \eqref{eq:assump-coeffi} in $B_1$, $A\in C^{k,\omega}(B_1)$. Let us assume that $S(u)\cap B_1=\emptyset$ and $Z(u)\subseteq Z(v)$. Then, $w=v/u$ belongs to $C^{k+1,\sigma}_\loc(B_1)$ where $\sigma$ is $(j-3)$-Dini, and satisfies the following boundary condition
\begin{equation}\label{boundaryzero}
\mean{A\nabla w,\nu}=0\qquad\mathrm{on \ } R(u)\cap B_1,
\end{equation}
where $\nu$ is the unit normal vector on $R(u)$. Moreover, let $\|A\|_{C^{k,\omega}(B_1)}\leq L_1$ and $u$ be $L$-harmonic in $B_1$ with $S(u)\cap B_1=\emptyset$. Then, for any $L$-harmonic function $v$ in $B_1$ with $Z(u)\subseteq Z(v)$, we have
\begin{equation*}
\left\| \frac{v}{u} \right\|_{C^{k+1,\sigma}(B_{1/2})}\leq C\left\|v\right\|_{L^2(B_1)},
\end{equation*}
with a positive constant $C$ depending on $n,\Lambda,\omega,L_1,u$ and its nodal set $Z(u)$.
\end{corollary}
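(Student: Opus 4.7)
The strategy is to apply Theorem \ref{thm:Dini-bdry-harnack} on both sides of $R(u)$ and then glue the two resulting one-sided $C^{k+1,\sigma}$ extensions of $w=v/u$ across $R(u)$. Since $A\in C^{k,\omega}(B_1)$ with $\omega$ being $j$-Dini, Corollary \ref{cor:reg:k} implies that every $L$-harmonic function in $B_1$ lies in $C^{k+1,\sigma'}_{\loc}(B_1)$ for some $(j-1)$-Dini modulus $\sigma'$; in particular, both $u$ and $v$ enjoy this regularity. Since $S(u)\cap B_1=\emptyset$, we have $|\nabla u|\neq 0$ on $R(u)\cap B_1$, so the Dini implicit function theorem exhibits $R(u)$ locally as the graph of a $C^{k+1,\sigma'}$ function, and hence as an $(n-1)$-dimensional $C^{k+1,\sigma'}$ hypersurface.

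Fix $x_0\in R(u)\cap \overline{B_{1/2}}$ and choose $r>0$ small enough that $B_r(x_0)\cap Z(u)$ is such a graph, separating $B_r(x_0)$ into two components $\Omega^+$ (where $u>0$) and $\Omega^-$ (where $u<0$). On $\Omega^+$, the pair $(u,v)$ satisfies \eqref{BHconditions} with $f=g=0$, since $\partial_{\nu}u=-|\nabla u|<0$ on $R(u)\cap B_r(x_0)$. Theorem \ref{thm:Dini-bdry-harnack} therefore yields a one-sided extension $w^+\in C^{k+1,\sigma}_{\loc}(\overline{\Omega^+}\cap B_{r/2}(x_0))$ with $\sigma$ a $(j-3)$-Dini modulus, together with the boundary condition \eqref{boundaryHOBH} which, in our case $f=g=0$, reduces to $\mean{A\nabla w^+,\nu}=0$ on $R(u)\cap B_{r/2}(x_0)$. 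Applying the theorem to $(-u,-v)$ on $\Omega^-$ produces the symmetric one-sided extension $w^-$ satisfying the analogous boundary condition.

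The gluing step ensures that $w^+$ and $w^-$ assemble into a single $C^{k+1,\sigma}$ function on $B_{r/2}(x_0)$. The zero-order traces coincide by the Taylor expansions of $u$ and $v$ at each $y\in R(u)$, giving the common limit $\mean{\nabla v,\nu}/\mean{\nabla u,\nu}$, while the conormal traces coincide since both vanish. After locally straightening $R(u)$, an iterative use of tangential differentiation combined with the degenerate equation $-\div(u^2A\nabla w)=0$, which expresses the normal derivatives $\partial_n^\ell w$ on $R(u)$ in terms of purely tangential and lower-order quantities, matches all mixed derivatives up to order $k+1$ from both sides. Finally, a covering argument on $R(u)\cap \overline{B_{1/2}}$, together with interior Schauder estimates on balls compactly contained in $B_1\setminus Z(u)$, upgrades the local estimates to the global bound on $B_{1/2}$.

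The main obstacle is precisely this gluing: while Theorem \ref{thm:Dini-bdry-harnack} readily provides one-sided Schauder estimates of the correct class, propagating the matching from the trace and the conormal condition all the way up to order $k+1$ requires a careful iterative bootstrap through the PDE, since the one-sided boundary condition only pins down the value and first-order normal information.
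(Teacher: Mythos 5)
Your outline (two one-sided estimates across $R(u)$ plus a gluing step) is the same as the paper's, and your treatment of the gluing is comparable in depth to theirs: the paper also only verifies continuity of the ratio across the flattened interface and then invokes the gluing lemma of \cite{TerTorVit22}*{Lemma 2.11}, whose mechanism is exactly the derivative-matching through the Neumann condition and the degenerate equation that you sketch. The genuine problem is in the first step, where you apply Theorem \ref{thm:Dini-bdry-harnack} as a black box and claim it returns a $(j-3)$-Dini modulus. It does not. The hypotheses of Theorem \ref{thm:Dini-bdry-harnack} require the boundary graph $\gamma$ and the coefficients $A$ to lie in $C^{k+1,\omega}$ and $C^{k,\omega}$ for the \emph{same} $j$-Dini $\omega$, and the conclusion is three Dini orders below that common modulus. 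In the present setting $R(u)$ is only a $C^{k+1,\sigma'}$ graph with $\sigma'$ a $(j-1)$-Dini function (this is all the interior regularity of $u$ gives), so the best common modulus you can feed into the theorem is $(j-1)$-Dini, and the theorem then outputs a $(j-4)$-Dini modulus, not $(j-3)$. For the borderline case $j=3$, which the corollary allows, your route does not apply at all, since Theorem \ref{thm:Dini-bdry-harnack} would have to be invoked with a $2$-Dini modulus.

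The paper avoids this loss by not re-running the full boundary Harnack machinery. It flattens $R(u)$ with the $C^{k+1,\sigma'}$ diffeomorphism \eqref{diffeo} and observes that $\tilde u=u\circ\Phi$ is $C^{k+1,\sigma'}$ \emph{by composition} (interior regularity of $u$ plus regularity of $\Phi$), rather than re-deriving the regularity of $\tilde u$ from the flattened boundary value problem via Corollary \ref{cor:reg:k}, which is where the extra order would be lost. Consequently $\overline A=(\tilde u/x_n)^2\tilde A\in C^{k,\sigma'}$ with $\sigma'$ still $(j-1)$-Dini (using Lemma \ref{lem1} and the Hopf--Oleinik nondegeneracy on both sides), and a direct application of Theorem \ref{teok} to $\div(x_n^2\overline A\nabla\tilde w)=0$ on each half-ball costs only two more orders, landing exactly at $(j-3)$-Dini together with the two-sided Neumann condition \eqref{Neumann} needed for the gluing. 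To repair your argument you should either restate Theorem \ref{thm:Dini-bdry-harnack} with decoupled moduli for the boundary and the coefficients and track the losses separately, or, as the paper does, bypass it and go through Theorem \ref{teok} directly.
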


As we have already remarked, the previous results can be implied by Schauder estimates for solutions to \eqref{e:r}. The weighted Sobolev space $H^1(\Omega\cap B_1,u^2dx)$ is defined as the completion of $C^\infty$ functions with respect to the weighted norm
$$\|w\|_{H^1(\Omega\cap B_1,u^2dx)}^2=\int_{\Omega\cap B_1}u^2(w^2+|\nabla w|^2)dx.$$
Notice that the weight $u^2$ is locally integrable. Then, after a standard flattening of the boundary $\partial\Omega$, everything is reduced to proving regularity up to the characteristic hyperplane $\Sigma:=\{x_n=0\}$ for solutions to
\begin{equation}\label{evenLa3}
-\div\left(x_n^aA\nabla w\right)=\div\left(x_n^a\bff\right)\qquad\mathrm{in \ } B_1^+,
\end{equation}
with $a=2$, $A,\bff\in C^{k,\omega}(B_1^+)$ for some $j$-Dini function $\omega$, and formally satisfying a weighted Neumann boundary condition on $\Sigma$
\begin{equation}\label{NeumannWBC}
\lim_{x_n\to0^+}x_n^a\mean{A\nabla w+\bff,\vec e_n}=0.
\end{equation}
In this paper, we will deal with general powers $a>-1$, and we remark that the $A_2$-Muckenhoupt range $a\in(-1,1)$ has important connections with fractional laplacians of order $s=(1-a)/2\in(0,1)$ in terms of Dirichlet-to-Neumann maps \cite{CafSil07}. The equation \eqref{evenLa3} is degenerate elliptic when $a$ is positive and singular when $a$ is negative. The unit half ball $B_1^+=B_1\cap\{x_n>0\}$ has topological boundary given by $\partial^+B_1^+=\partial B_1\cap\{x_n>0\}$ and $B_1'=B_1\cap\Sigma$. Solutions to \eqref{evenLa3} must be understood as functions belonging to $H^{1,a}(B_1^+):=H^{1}(B_1^+,x_n^adx)$ satisfying the following weak formulation
\begin{equation*}
-\int_{B_1^+}x_n^a\mean{A\nabla w,\nabla\phi}dx=\int_{B_1^+}x_n^a\mean{\bff,\nabla\phi}dx
\end{equation*}
for every test function $\phi\in C^{\infty}_0(B_1)$, actually when $a\geq1$ test functions can be taken in $C^{\infty}_0(B_1^+)$ due to the strong degeneracy of the weight \cite[Proposition 2.2]{SirTerVit21a}. 

\begin{theorem}\label{teok}
Let $a>-1$, $k\in\mathbb N$, $j\in\mathbb N\setminus\{0,1\}$ and $\omega$ be a $j$-Dini function. Let us consider $w\in H^{1}(B_1^+,x_n^adx)$ a weak solution to \eqref{evenLa3} with $A$ symmetric and satisfying \eqref{eq:assump-coeffi} in $B_1^+$, $\bff,A\in C^{k,\omega}(B_1^+)$. Then, $w\in C^{k+1,\sigma}_\loc(B_1^+\cup B_1')$  where $\sigma$ is $(j-2)$-Dini, and satisfies
\begin{equation}\label{Neumann}
\mean{A\nabla w+\bff,\vec e_n}=0\qquad\mathrm{on \ } B'_1.
\end{equation}
Moreover, if $\|A\|_{C^{k,\omega}(B_1^+)}\leq L_1$, then for $0<r<1$, there exists a constant $c>0$, depending on $n$, $\Lambda$, $\omega$, $r$, $k$, $a$ and $L_1$, such that
\begin{equation*}
\|w\|_{C^{k+1,\sigma}(B_r^+)}\leq c\left(\|w\|_{L^2(B_1^+,x_n^adx)}+\|\bff\|_{C^{k,\omega}(B_1^+)}\right).
\end{equation*}
\end{theorem}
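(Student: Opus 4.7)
The plan is to prove Theorem \ref{teok} by induction on $k\ge 0$, with the main technical step being the base case $k=0$: $C^{1,\sigma}$ regularity with $\sigma$ a $(j-2)$-Dini modulus. For this I would set up a Campanato-type polynomial approximation scheme adapted to Dini data in the spirit of \cite{DonEscKim18,DonLeeKim20}, built on the $C^{1,\alpha}$ theory for the frozen-coefficient degenerate problem from \cite{SirTerVit21a,SirTerVit21b}. Concretely, at each point $x_0 \in B'_{3/4}$ and dyadic scale $r_m = \lambda^m$ (with $\lambda$ small, to be chosen), let
\[
\Phi(x_0, r) := \inf_{P}\left(\dashint_{B^+_r(x_0)} x_n^a |w-P|^2 \, dx\right)^{1/2},
\]
where the infimum ranges over affine functions $P(x) = \alpha + \beta\cdot(x-x_0)$ with $\beta\cdot\vec e_n = 0$, so that $P$ automatically satisfies the weighted Neumann condition with $\bff \equiv 0$. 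Freezing $A$ and $\bff$ at $x_0$ and comparing $w - P$ with the corresponding solution of the homogeneous constant-coefficient problem would yield the one-step decay $\Phi(x_0, \lambda r) \le \lambda^{1+\alpha} \Phi(x_0, r) + C\, r\,\omega(r)$. Iterating and summing via $(D3)$ gives $\Phi(x_0, r_m) \lesssim r_m(\omega + \cJ^1_\omega)(r_m)$, hence $|\beta_{m+1}-\beta_m| \lesssim (\omega+\cJ^1_\omega)(r_m)$; a second summation produces convergence $\beta_m \to \D w(x_0)$ with pointwise modulus $\lesssim \cJ^2_\omega$. Combined with a standard two-point estimate this yields a uniform modulus $\sigma\lesssim \cJ^2_\omega$, which is $(j-2)$-Dini.

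For the inductive step $k-1 \to k$, suppose the theorem is proved at level $k-1$. Differentiating \eqref{evenLa3} in a tangential direction $i < n$ shows that $v_i := \pa_i w$ is a weak solution of
\[
-\div(x_n^a A\,\D v_i) = \div(x_n^a \bff_i^\ast) \qquad\text{in } B_1^+,
\]
with $\bff_i^\ast := \pa_i \bff + (\pa_i A)\D w \in C^{k-1,\omega}(B_1^+)$, thanks to the inductive regularity $w \in C^{k,\tilde\sigma}_\loc$ for some $(j-1)$-Dini $\tilde\sigma$. Applying the inductive hypothesis yields $v_i \in C^{k,\sigma}_\loc(B_1^+ \cup B'_1)$, so every tangential derivative of $w$ of order $k+1$ has modulus $\sigma$. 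To recover the normal derivatives, expand \eqref{evenLa3} in the interior as $-\div(A\D w) - a\,x_n^{-1}(A\D w)_n = \div(\bff) + a\,x_n^{-1}\bff_n$, and use the continuity of $\mean{A\D w + \bff, \vec e_n}$ up to $\Sigma$ (established during the base case) together with \eqref{Neumann} to express $\pa_n^2 w$ algebraically from tangential derivatives, coefficients and forcing; iterated differentiation then recovers all higher normal derivatives.

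The main obstacle will be the careful bookkeeping of the hierarchy $\cJ^j_\omega$ through the iteration: one must establish a Dini-Campanato lemma of the form ``if $\Phi(x_0,r) \le r\,\psi(r)$ uniformly in $x_0$ for a $j$-Dini $\psi$, then $\D w$ has modulus of continuity $\le C\cJ^1_\psi$'', and verify that the two successive integrations land precisely in the $(j-2)$-Dini class. Here condition $(D3)$ is essential, both for summing geometric series arising in the dyadic iteration and for absorbing $\omega \lesssim \cJ^1_\omega$ so the two error contributions merge cleanly. A second delicate point is the strong degeneracy when $a\ge 1$: test functions are restricted to $C^\infty_0(B_1^+)$ by \cite[Prop.~2.2]{SirTerVit21a}, and the pointwise Neumann condition \eqref{Neumann} must be read off the continuity of $A\D w + \bff$ up to $\Sigma$ established via the Campanato iteration, rather than from the distributional formulation alone.
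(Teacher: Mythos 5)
Your overall architecture coincides with the paper's (a boundary Campanato iteration against affine approximants built from the frozen-coefficient degenerate problem, followed by tangential differentiation and an algebraic recovery of $\partial_n^2 w$), and your count of two Dini integrations landing in the $(j-2)$ class is correct. However, there are two concrete gaps.

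First, the competitor class is wrong. You restrict to affine $P$ with $\beta\cdot\vec e_n=0$, but the linearization at $x_0\in\Sigma$ of the frozen homogeneous solution $h$ satisfies $\mean{A(x_0)\D h(x_0),\vec e_n}=0$, which does not force $\partial_n h(x_0)=0$ unless $a_{nj}(x_0)=0$ for $j<n$; and once $\bff(x_0)\neq0$ the correct first-order approximant must satisfy $\mean{A(x_0)\beta+\bff(x_0),\vec e_n}=0$ (this is exactly \eqref{BCP} in Proposition \ref{prop:Dini-poly-approx}, obtained there by first subtracting the correction $\mean{A^{-1}(x_0)\bff(x_0),x-x_0}$). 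With your restricted class the one-step decay $\Phi(x_0,\la r)\le\la^{1+\al}\Phi(x_0,r)+Cr\omega(r)$ fails already at the first scale, and the scheme would output $\partial_n w=0$ on $\Sigma$ instead of \eqref{Neumann}. You must either enlarge the class to $\mean{A(x_0)\beta+\bff(x_0),\vec e_n}=0$ or normalize $A(x_0)=\mathbb I_n$ and kill $\bff(x_0)$ by an affine change at every boundary point.

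Second, the inductive step degrades the Dini order. You claim $\bff_i^\ast=\partial_i\bff+(\partial_iA)\D w\in C^{k-1,\omega}$ "thanks to the inductive regularity $w\in C^{k,\tilde\sigma}$". But $D^{k-1}\bff_i^\ast$ contains the term $\partial_iA\cdot D^{k}w$, whose modulus is $\tilde\sigma$, and a $(j-2)$-Dini (or even $(j-1)$-Dini) modulus is in general strictly larger than the $j$-Dini $\omega$ near $0$ (e.g. $\omega(r)=(-\log r+1)^{-j-1}$ versus $\cJ^2_\omega$). So $\bff_i^\ast$ only lies in $C^{k-1,\tilde\sigma}$, and feeding a $(j-2)$-Dini datum back into the level-$(k-1)$ statement returns only a $(j-4)$-Dini modulus: you lose two orders per induction step. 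The paper avoids this by first observing that $A,\bff\in C^{k,\omega}\subset C^{k-1,1}$, so classical (Hölder) Schauder theory for degenerate equations gives $w\in C^{k,\al}$ for every $\al<1$; then $r^{\al}\le\omega(r)/\omega(1)$ by $(D3)$ places $\bff_i^\ast$ exactly in $C^{k-1,\omega}$ with the original $j$-Dini $\omega$, and no order is lost. Finally, your recovery of $\partial_n^2w$ needs the quantitative division statements (Lemmas \ref{lem1} and \ref{lem2}): the singular term $a\,x_n^{-1}\mean{A\D w+\bff,\vec e_n}$ must be shown to extend to $C^{k,\sigma}$ up to $\Sigma$ via the representation $\varphi(x',x_n)=x_n^{-a}\int_0^{x_n}t^a g(x',t)\,dt$, not merely via continuity of $\mean{A\D w+\bff,\vec e_n}$.
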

Notice that the previous result implies a true Neumann boundary condition \eqref{Neumann} at $\Sigma$ and compare with the weighted one in \eqref{NeumannWBC}.

We remark that the $C^{k+1,\sigma}$ estimate in Theorem~\ref{teok} can be obtained without assuming the symmetry on $A$. Specifically, to prove this theorem, we rely on the $C^2$ estimate for the solution of a homogeneous equation with frozen coefficient (see Proposition~\ref{prop:freeze-coeffi-poly-approx}). This estimate, which does not require the symmetry condition, can be found in \cite{DonPha20}.

We also would like to remark that, in case of classic $C^{k,\alpha}$ Schauder estimates, i.e. for homogeneous power-type moduli, the previous result was proved both in \cite{TerTorVit22} for general powers $a>-1$ and in \cite{Zha23} for $a=2$. The two proofs differ mainly in the way the $C^1$ estimate is obtained. Then, $C^k$ estimates follow by a bootstrap argument. Regarding the gradient estimate, we follow a Campanato iteration argument used also in \cite{Zha23} since it is more effective in the present case of non homogeneous moduli.

\subsection*{Structure of the paper}
The paper is organized as follows: in Section \ref{sec:2} we list some properties of Dini functions and in Proposition \ref{alphaincreasing} we remark that the $\alpha$-nonincreasing condition $(D3)$ in Definition \ref{dinidef} is actually natural. Then, in Corollary \ref{cor:reg} we discuss about the modulus of continuity of $\nabla u$ in the $C^1$ estimate \cite{DonEscKim18}*{Proposition~2.7}. Finally, bootstrapping the latter result for higher derivatives, we imply Corollary \ref{cor:reg:k}; that is, Schauder estimates for uniformly elliptic equations with coefficients, data and boundaries in $C^{k,\omega}$ spaces ($\omega$ $j$-Dini), of independent interest.

Section \ref{sec:3} is devoted to the proof of the main Theorem \ref{teok}; that is, regularity for solutions to the degenerate or singular equation \eqref{evenLa3} for general powers $a>-1$. First, in Lemma \ref{lem:Poincare} we prove a Poincaré type inequality which will be a fundamental tool in our weighted functional framework. Then, a Campanato iteration argument will provide the $C^1$ estimate in Theorem \ref{thm:Dini-ratio-reg}. A special care must be taken during the polynomial approximation in order to obtain the boundary condition \eqref{Neumann}. Eventually, the $C^k$ Schauder estimates are obtained by induction.

In Section \ref{sec:4}, as a consequence of the previous results, we prove the higher order boundary Harnack principle in Theorem \ref{thm:Dini-bdry-harnack} after a standard flattening of the boundary. Finally, in Corollary \ref{schauderR(u)} we imply Schauder estimates for ratios $w=v/u$ of $L$-harmonic functions sharing zero sets $Z(u)\subseteq Z(v)$ across the regular set $R(u)$. In order to glue together the estimates form both sides of $R(u)$, the validity of the Neumann condition \eqref{Neumann} will be crucial.

\subsection*{Notations}\label{subsec:notation}
We use the following notation in this paper.
\begin{itemize}
\item $\R^n$ stands for the $n$-dimensional Euclidean space. We indicate the points in $\R^n$ by $x=(x',x_n)$, where $x'=(x_1,\cdots,x_{n-1})\in \R^{n-1}$, and identify $\R^{n-1}$ with $\R^{n-1}\times\{0\}$.

For $x\in\R^n$ and $r>0$, we let
\begin{alignat*}{2}
  B_r(x)&=\{y\in \R^n:|x-y|<r\},&\quad&\text{ball in $\R^n$,}\\
  B^{\pm}_r(x')&=B_r(x',0)\cap \{\pm y_n>0\},&& \text{half-ball}\\
  B'_r(x')&=B_r(x',0)\cap \{y_n=0\}, &&\text{thin ball.}
\end{alignat*}
When the center is the origin, we simply write $B_r=B_r(0)$, $B_r^\pm=B_r^\pm(0)$ and  $B_r'=B'_r(0)$.

\item The notation $\mean{\cdot,\cdot}$ stands for the scalar product of two vectors; that is, for $x=(x_1,...,x_n)$ and $y=(y_1,...,y_n)$, then $\mean{x,y}=\sum_{i=1}^nx_iy_i$. The orthonormal basis of $\R^n$ is denoted by $\vec e_i$, $i=1,...,n$; that is $\mean{x,\vec e_i}=x_i$.

\item We denote the set of nonnegative integers by $\mathbb{N}=\{0,1,2,3,\cdots\}$.
\item Let $\beta=(\beta_1,...,\beta_n)\in\mathbb N^n$ be a multiindex. Given $|\beta|=\sum_{i=1}^n\beta_i$ and $\partial x^\beta=\prod_{i=1}^n\partial x_i^{\beta_i}$, then the $\beta$ partial derivative of order $|\beta|$ is given by
$$D^\beta u=\frac{\partial^{|\beta|}u}{\partial x^\beta}.$$
When we write $D^ku$ for some $k\in\mathbb N$ then we mean a generic partial derivative $D^\beta u$ with $|\beta|=k$.

\item A modulus of continuity is a function $\omega$ satisfying $(D1)$ in Definition \ref{dinidef}. A $0$-Dini function is a function $\omega$ satisfying $(D1),(D3)$ in Definition \ref{dinidef}. A $j$-Dini function is a function as in Definition \ref{dinidef}.
Given a modulus of continuity $\sigma$ and constants $0<\ka<1$, $0<\al<1$, we use the following notation for a modulus of continuity associated to $\sigma$
\begin{align}
\tilde\sigma(r)=\tilde \sigma_{\al,\kappa}(r)=\sum_{i=0}^\infty\ka^{i\al}\left(\sigma(\ka^{-i}r)[\ka^{-i}r\le1]+\sigma(1)[\ka^{-i}r>1]\right),\label{tildesigma}
\end{align}
where $[\,\cdot\,]$ is Iverson's bracket notation, i.e., $[P]=1$ if $P$ is true, while $[P]=0$ otherwise. For $\cJ_\sigma^j$ as in \eqref{Jomega}, $j=1,2$, we also write for simplicity
\begin{align*}
\cJ_\sigma(r)&=\cJ^1_\sigma(r)=\int_0^r\frac{\sigma(s)}s\,ds,\\
\cH_\sigma(r)&=\cJ^2_\sigma(r)=\int_0^r\frac1s\int_0^s\frac{\sigma(\rho)}\rho\,d\rho ds
\end{align*}
when $\sigma$ is $1$-Dini and $2$-Dini, respectively.

\end{itemize}

\medskip

\section{Some remarks on uniformly elliptic problems with Dini type assumptions}\label{sec:2}
In this section we discuss some properties on Dini functions and regularity results for uniformly elliptic equations with Dini type assumptions on data.

\subsection{Some properties of $j$-Dini functions}

\begin{remark}\label{remarkJalpha}
We list some useful properties of $j$-Dini moduli:
\begin{itemize}
\item[$(1)$] Let $j\in\mathbb N\setminus\{0\}$. If $\sigma$ is $j$-Dini, then $\cJ^j_\sigma$ is a modulus of continuity; that is, a function satisfying $(D1)$.
\item[$(2)$] For any $j\in\mathbb N\setminus\{0\}$, $\cJ^j_\sigma=\cJ^{i}_{\cJ^{l}_\sigma}$ for any $i,l\in\mathbb N$ such that $i+l=j$.
\item[$(3)$] If $\cJ^j_\sigma$ satisfies $(D2)$ for some $j\in\mathbb N\setminus\{0\}$, also $\cJ^{j-1}_\sigma$ does it. This implies $\mathcal D_j\subseteq \mathcal D_{j-1}$.
\item[$(4)$] If $\sigma$ satisfies condition $(D3)$ in Definition \ref{dinidef} for some $\alpha\in(0,1]$, then it satisfies the same condition for any $\beta\in[\alpha,1]$.
\item[$(5)$] Let $j\in\mathbb N\setminus\{0\}$. If $\sigma$ is $j$-Dini satisfying condition $(D3)$ for some $\alpha\in(0,1]$ then $\mathcal J^i_\sigma$ does it with the same $\alpha$ for any $0\leq i\leq j$. In fact
for $0<r<R$
\begin{align*}
\cJ^1_\sigma(r)&=\int_0^r\frac{\sigma(s)}s\,ds=\int_0^R\frac{\sigma\left(\frac{r}Rt\right)}{t}\,dt\ge \int_0^R(r/R)^\al\frac{\sigma(t)}t\,dt=(r/R)^\al\cJ^1_\sigma(R).
\end{align*}
Then one can obtain the same property for any $2\leq i\leq j$ recursively.
\item[$(6)$] Let $j\in\mathbb N\setminus\{0\}$. For a $j$-Dini function $\sigma$ it holds
$$\cJ^{j-1}_\sigma(r)\le \cJ^j_\sigma(r), \qquad 0<r<1.$$
In fact, property $(D3)$ for some $\alpha\in(0,1)$ gives $(D3)$ for $\alpha=1$ by $(4)$; that is, the monotonicity of $\sigma(r)r^{-\al}$ implies that of $\sigma(r)r^{-1}$, which readily gives
$$
\cJ^1_\sigma(r)=\int_0^r\frac{\sigma(\rho)}\rho\,d\rho\ge\int_0^r\frac{\sigma(r)}r\,d\rho=\sigma(r).
$$
This is the first inequality. The other ones follow iteratively applying $(5)$.
\item[$(7)$] Let $j\in\mathbb N\setminus\{0\}$. If $\sigma$ is $j$-Dini, then for any $0\leq i\leq j$, $\cJ^i_\sigma$ is $(j-i)$-Dini. In fact, for a given $0\leq i\leq j$ by combining the properties above it is easy to check that $\cJ^{i}_\sigma$ satisfies $(D1),(D3)$. Then, $\cJ^{(j-i)-1}_{\cJ^i_\sigma}$ satisfies $(D2)$ since
$$\cJ^{(j-i)-1}_{\cJ^i_\sigma}=\cJ^{j-1}_\sigma.$$
\item[$(8)$] Let $\sigma$ be a $1$-Dini function and $c_0\in(0,1]$, $\lambda\in(0,1)$ be constants. Then, for any $k\in \mathbb{N}$,
$$
\sum_{i=k}^\infty\sigma(c_0\la^i)\le \frac{\cJ^1_\sigma(c_0\la^k)}{1-\la}.
$$
Indeed, the condition $(D3)$ for $\al=1$ gives for every $i\in\mathbb{N}$
\begin{align*}
    \sigma(c_0\la^i)&=\frac1{c_0\la^i-c_0\la^{i+1}}\int_{c_0\la^{i+1}}^{c_0\la^i}\sigma(c_0\la^i)\,dt=\frac1{1-\la}\int_{c_0\la^{i+1}}^{c_0\la^i}\frac{\sigma(c_0\la^i)}{c_0\la^i}\,dt\\
    &\le \frac1{1-\la}\int_{c_0\la^{i+1}}^{c_0\la^i}\frac{\sigma(t)}{t}\,dt.
\end{align*}
This readily yields
$$
\sum_{i=k}^\infty\sigma(c_0\la^i)\le\frac1{1-\la}\int_0^{c_0\la^k}\frac{\sigma(t)}t\,dt=\frac{\cJ^1_\sigma(c_0\la^k)}{1-\la}.
$$

\end{itemize}
\end{remark}

First, we would like to prove that condition $(D3)$ in Definition \ref{dinidef} is not restrictive. This remark comes from \cite{ApuNaz16,ApuNaz22,DonEscKim18,DonLeeKim20} where the fact is proved in some subcases. However, we present a unified proof that cover the full range $\alpha\in(0,1]$ and fits with our Definition \ref{dinidef} for any $j\in\mathbb N\setminus\{0\}$. The proof follows some arguments in \cite{ApuNaz22}*{Remark~1.2}.

\begin{proposition}\label{alphaincreasing}
Let $j\in\mathbb N\setminus\{0\}$ and $\sigma$ be a $j$-Dini function as in Definition \ref{dinidef} but without requiring $(D3)$. Then, fixed any $\alpha\in(0,1]$ the function $\sigma_\alpha:[0,1]\to\R$
\begin{equation}\label{sigmaA}
\sigma_\alpha(r)=r^\alpha\sup_{s\in[r,1]}\frac{\sigma(s)}{s^\alpha}
\end{equation}
satisfies
\begin{itemize}
\item[i)] $\sigma(r)\le\sigma_\alpha(r)$;
\item[ii)] $\sigma_\alpha$ satisfies $(D1)$, $(D3)$ with the same $\alpha\in(0,1]$ and $\cJ^{j-1}_{\sigma_\alpha}$ satisfies $(D2)$.
\end{itemize}
Moreover, if $\alpha\in(0,1)$, $\sigma_\alpha$ is $j$-Dini.
\end{proposition}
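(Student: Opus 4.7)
\emph{Plan.} Part (i) is immediate: choosing $s=r$ in the defining supremum gives $\sigma_\alpha(r)\ge r^\alpha\cdot\sigma(r)/r^\alpha=\sigma(r)$. The condition $(D3)$ for $\sigma_\alpha$ with the same exponent $\alpha$ is likewise tautological, since $\sigma_\alpha(r)/r^\alpha=M(r):=\sup_{s\in[r,1]}\sigma(s)/s^\alpha$ is nonincreasing by construction. The modulus condition $(D1)$ for $\sigma_\alpha$ requires three verifications: $(a)$ nondecreasingness of $\sigma_\alpha=r^\alpha M(r)$, which I would prove by a short case split: for $r_1<r_2$, either the supremum defining $M(r_1)$ is attained at some $s\ge r_2$ (so $M(r_1)\le M(r_2)$ and the monotonicity of $r^\alpha$ concludes), or at some $s\in[r_1,r_2)$, in which case $\sigma_\alpha(r_1)=(r_1/s)^\alpha\sigma(s)\le\sigma(r_2)\le\sigma_\alpha(r_2)$ using monotonicity of $\sigma$; $(b)$ continuity on $(0,1]$, from the continuity of $s\mapsto\sigma(s)/s^\alpha$ on compact subsets and the monotonicity of $M$; and $(c)$ $\sigma_\alpha(0^+)=0$, by splitting the supremum at an intermediate scale $\delta$: on $[r,\delta]$ one uses $r^\alpha/s^\alpha\le 1$ to bound the piece by $\sigma(\delta)$, while on $[\delta,1]$ one bounds by $r^\alpha\sigma(1)/\delta^\alpha\to 0$, then letting $\delta\to 0$.

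For the remaining assertion that $\mathcal J^{j-1}_{\sigma_\alpha}$ satisfies $(D2)$, I would start from the pointwise estimate
\[
\frac{\sigma(s)}{s^\alpha}\le\frac{\sigma(r)}{r^\alpha}+\int_r^s\frac{d\sigma(t)}{t^\alpha},\qquad 0<r\le s\le 1,
\]
obtained by writing $\sigma(s)=\sigma(r)+\int_r^s d\sigma$ and using $s^{-\alpha}\le t^{-\alpha}$ for $t\le s$. Taking the supremum over $s\in[r,1]$, multiplying by $r^\alpha$, and applying Fubini to $\int_0^r\sigma_\alpha(s)/s\,ds$ yields, after splitting the inner integral at $t=r$, the clean first-level bound
\[
\mathcal J^1_{\sigma_\alpha}(r)\le \mathcal J^1_\sigma(r)+\frac{\sigma(r)}{\alpha}+\frac{g(r)}{\alpha},\qquad g(r):=\int_r^1\left(\frac{r}{t}\right)^\alpha d\sigma(t),
\]
where $g(1)=0$ and $g(r)\to 0$ as $r\to 0^+$ by dominated convergence. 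A second application of Fubini, with exactly the same split, produces the absorbing identity $\int_0^r g(s)/s\,ds=\alpha^{-1}(\sigma(r)+g(r))$, which shows that the class of linear combinations of $\mathcal J^l_\sigma$ and $g$ is stable under the operator $f\mapsto\int_0^rf(s)/s\,ds$. Iterating, one obtains by induction on $j$ a bound
\[
\mathcal J^j_{\sigma_\alpha}(r)\le \sum_{l=0}^{j}C_{j,l,\alpha}\mathcal J^l_\sigma(r)+C'_{j,\alpha}g(r),
\]
which evaluated at $r=1$ is finite since $g(1)=0$ and each $\mathcal J^l_\sigma(1)$ with $l\le j$ is finite (a weakly $j$-Dini $\sigma$ forces every lower iterate to be finite, as otherwise one of the defining integrals would diverge). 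The ``moreover'' clause is then immediate: when $\alpha\in(0,1)$, the tautological $(D3)$ for $\sigma_\alpha$ lies in the admissible range of Definition \ref{dinidef}, so $\sigma_\alpha\in\mathcal D_j$.

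The main obstacle I anticipate is precisely the stability of the estimate under iteration. The naive hope $\sigma_\alpha\le C\sigma$ is false: for $\sigma(r)=r^\beta$ with $\beta>\alpha$ one computes $\sigma_\alpha(r)=\sigma(1)r^\alpha\gg\sigma(r)$ near the origin, so no bound purely in terms of $\sigma$ can hold. One must therefore carry the auxiliary term $g$ through the induction, and the crucial point is that $g$ is absorbed back into the same class under $f\mapsto\int_0^rf(s)/s\,ds$ via the identity above. This self-reproducing structure, together with the vanishing $g(1)=0$, is what closes the argument.
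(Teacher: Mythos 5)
Your proof is correct, and for the crucial integrability part it takes a genuinely different route from the paper's. For the monotonicity in $(D1)$ and for $(D2)$, the paper decomposes the open set $\{\sigma<\sigma_\alpha\}$ into countably many disjoint intervals $I_i=(\tau_{i1},\tau_{i2})$ on which $\sigma_\alpha(r)r^{-\alpha}$ is constant and equal to $\sigma(\tau_{i2})\tau_{i2}^{-\alpha}$; integrating $r^{\alpha-1}$ over each $I_i$ and summing the resulting telescoping differences $\alpha^{-1}(\sigma(\tau_{i2})-\sigma(\tau_{i1}))$ gives the sharp pointwise bound $\cJ^1_{\sigma_\alpha}(r)\le \cJ^1_\sigma(r)+\alpha^{-1}\sigma(r)$, after which the conclusion follows in one stroke from $\cJ^{j-1}_{\sigma_\alpha}=\cJ^{j-2}_{\cJ^1_{\sigma_\alpha}}\le\cJ^{j-1}_\sigma+\alpha^{-1}\cJ^{j-2}_\sigma$. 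You instead work with the Stieltjes bound $\sigma_\alpha\le\sigma+g$, $g(r)=\int_r^1(r/t)^\alpha\,d\sigma(t)$, and the Fubini identity $\int_0^rg(s)s^{-1}\,ds=\alpha^{-1}(\sigma(r)+g(r))$, which makes the span of $\{\cJ^l_\sigma\}_l$ and $g$ stable under the averaging operator and closes the induction because $g(1)=0$. Your bound carries $j+1$ terms plus the auxiliary $g$ where the paper's has two, but both are finite at $r=1$ under the hypothesis, so the argument is complete; your version trades the structural analysis of the contact set $\{\sigma_\alpha=\sigma\}$ for Stieltjes integration and a self-reproducing remainder, and your diagnosis that no bound of the form $\sigma_\alpha\le C\sigma$ can hold (so that an auxiliary term must be propagated) is exactly the point that makes the naive approach fail. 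Your treatment of $(D1)$ — the case split on where the supremum is attained for monotonicity, and the two-scale splitting of the supremum for $\sigma_\alpha(0^+)=0$ — is also sound and somewhat more elementary than the paper's interval decomposition and sup-realizing sequences.
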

\begin{proof}
Point $i)$ is trivial. In order to prove point $ii)$, we have to prove that $\sigma_\alpha$ satisfies $(D1), (D3)$ and $\cJ^{j-1}_{\sigma_\alpha}$ satisfies $(D2)$.

We remark that by definition of $\sigma_\alpha$ the property $(D3)$ is trivially satisfied with the chosen $\alpha$. Let us start with proving $(D1)$. First we notice that point $i)$ implies $\sigma_\alpha(r)\ge\sigma(r)>0$ for $r>0$. Then, let us show that $\lim_{r\to0^+}\sigma_\alpha(r)=0$. Then, $\sigma_\alpha$ can be extended continuously up to $0$ with $\sigma_\alpha(0)=0$. Being the case $\sup_{s\in[0,1]}\frac{\sigma(s)}{s^\alpha}=\frac{\sigma(\overline s)}{\overline s^\alpha}$ for a certain $\overline s\in(0,1]$ trivial, we can suppose that, along any sequence $r_n\to0^+$, there exists a sequence $s_n\to0^+$ which realizes the supremum; that is
$$
\sup_{s\in[r_n,1]}\frac{\sigma(s)}{s^\alpha}=\frac{\sigma(s_n)}{s_n^\alpha}.
$$
Then, since $r_n\leq s_n\to0^+$ we have
$$
\sigma_\alpha(r_n)=r_n^\alpha\frac{\sigma(s_n)}{s_n^\alpha}\leq \sigma(s_n)\to0.
$$
For property $(D1)$ it remains to prove that $\sigma_\alpha$ is nondecreasing. Let us remark here that $\sigma_\alpha=\sigma$ in any point of $[0,1]$ for which
$$
\sup_{s\in[r,1]}\frac{\sigma(s)}{s^\alpha}=\frac{\sigma(r)}{r^\alpha}.
$$
Then, the set $\{r\in(0,1)\,:\,\sigma(r)<\sigma_\alpha(r)\}$, which is open in view of the continuity of $\sigma$ and $\sigma_\alpha$, can be expressed as a union of disjoint open intervals $I_i=(\tau_{i1},\tau_{i2})$ with $i\in \mathcal A_1$ at most countable. Moreover, for each $i$ one has
$$\frac{\sigma_\alpha(r)}{r^\alpha}\equiv c_i=\frac{\sigma(\tau_{i2})}{\tau_{i2}} \quad \mathrm{in \ }I_i.$$
Hence, it follows that $\sigma_\alpha$ is nondecreasing.

Let us prove now that $\cJ^{j-1}_{\sigma_\alpha}$ satisfies $(D2)$. Let us start with the case $j=1$; that is, $\cJ^{0}_{\sigma_\alpha}=\sigma_\alpha$ satisfies $(D2)$ . By the previous considerations
\begin{equation*}
\int_0^1\frac{\sigma_\alpha(r)}{r}=\int_{(0,1)\cap\{\sigma_\alpha=\sigma\}}\frac{\sigma(r)}{r}+\sum_{i\in\mathcal A_1}c_i\int_{\tau_{i1}}^{\tau_{i2}}r^{\alpha-1}.
\end{equation*}
The first integral is finite by the Dini condition on $\sigma$; that is, the fact that $\sigma$ satisfies $(D2)$. Moreover, since $c_i=\frac{\sigma(\tau_{i1})}{\tau_{i1}}$ if $\tau_{i1}>0$ and $\sigma(\tau_{i1})=0$ if $\tau_{i1}=0$, the second piece equals
\begin{equation*}
\frac{1}{\alpha}\sum_{i\in\mathcal A_1}(\sigma(\tau_{i2})-\sigma(\tau_{i1}))\leq\frac{\sigma(1)-\sigma(0)}{\alpha}=\frac{\sigma(1)}{\alpha}.
\end{equation*}
In the previous inequality we are using the monotonicity of $\sigma$.

Let us consider now the case $j\geq2$. Let us take $\mathcal J^1_{\sigma_\alpha}(r)=\int_0^r\frac{\sigma_\alpha(s)}{s}$.
Then, arguing as above, for any fixed $r\in(0,1]$ we can consider in $(0,r)$ the sets where $\sigma_\alpha=\sigma$ and $\sigma<\sigma_\alpha$. Hence, the open set where $\sigma<\sigma_\alpha$ in $(0,r)$ is given by the union of disjoint open intervals $I_i(r)=(\tau_{i1}(r),\tau_{i2}(r))$ with $i\in\mathcal A_r$ at most countable. Moreover, for each $i$ one has
$$\frac{\sigma_\alpha(s)}{s^\alpha}\equiv c_i(r)=\frac{\sigma(\tau_{i2}(r))}{\tau_{i2}(r)} \quad \mathrm{in \ }I_i(r).$$
Then

\begin{equation*}
\mathcal J^1_{\sigma_\alpha}(r)=\int_0^r\frac{\sigma_\alpha(s)}{s}=\int_{(0,r)\cap\{\sigma_\alpha=\sigma\}}\frac{\sigma(s)}{s}+\sum_{i\in\mathcal A_r}c_i(r)\int_{\tau_{i1}(r)}^{\tau_{i2(r)}}s^{\alpha-1}.
\end{equation*}
Hence, the first piece is controlled by $\mathcal J^1_\sigma(r)$. The second one, since $c_i(r)=\frac{\sigma(\tau_{i1}(r))}{\tau_{i1}(r)}$ if $\tau_{i1}(r)>0$ and $\sigma(\tau_{i1}(r))=0$ if $\tau_{i1}(r)=0$, by
\begin{equation*}
\frac{1}{\alpha}\sum_{i\in\mathcal A_r}\left(\sigma(\tau_{i2}(r))-\sigma(\tau_{i1}(r))\right)\leq\frac{\sigma(r)-\sigma(0)}{\alpha}=\frac{\sigma(r)}{\alpha}=\frac{\cJ^{0}_\sigma(r)}{\alpha}.
\end{equation*}
Hence, by $(2)$ in Remark \ref{remarkJalpha},
$$\cJ^{j-1}_{\sigma_\alpha}=\cJ^{j-2}_{\cJ^1_{\sigma_\alpha}}\le \cJ^{j-2}_{\cJ^1_{\sigma}}+\cJ^{j-2}_{\frac{\cJ^0_{\sigma}}{\alpha}}=\cJ^{j-1}_{\sigma}+\frac{1}{\alpha}\cJ^{j-2}_{\sigma}.$$
Then, by definition $\cJ^{j-1}_\sigma$ satisfies $(D2)$ and by $(3)$ also $\cJ^{j-2}_\sigma$ does it.
\end{proof}

We remark that some portion of the proof of the above proposition can be obtained alternatively by using the result in Lemma~\ref{lem:j-Dini}. For example, for a $j$-Dini function $\sigma$, one can show (see \cite{DonEscKim18}) $\sigma_\alpha\le C\tilde\sigma_{\al,1/2}$, where $\tilde\sigma_{\al,1/2}$ is as in \eqref{tildesigma}. Since $\tilde\sigma_{\al,1/2}$ is $j$-Dini by Lemma~\ref{lem:j-Dini}, this implies that $\cJ_{\sigma_\al}^{j-1}$ satisfies the integrability condition $(D2)$.

From now on, in view of the previous remarks, we can consider $j$-Dini functions $\sigma$ satisfying $(D3)$ for some chosen $\alpha\in(0,1)$. Hence, $\sigma$ coincides with $\sigma_\alpha$ in \eqref{sigmaA}.


\begin{example}
For any $j\in\mathbb N\setminus\{0\}$, one has $\mathcal D_{j+1}\subsetneq \mathcal D_{j}$. An example of $j$-Dini function which is not $(j+1)$-Dini in $[0,1]$ is given by
\begin{equation*}
\omega_j(r)=\frac{1}{(-\log r+1)^{j+1}}.
\end{equation*}
It trivially satisfies $(D1)$. Actually, in order to enjoy condition $(D3)$ for a certain $\alpha\in(0,1)$ one can define $\omega_{j,\alpha}$ as in \eqref{sigmaA}. Let us check the validity of condition $(D2)$ for $\cJ^{j-1}_{\omega_j}$, which follows by proving by induction on $j\in\mathbb N\setminus\{0\}$ that
\begin{equation}\label{formulaLog}
\cJ^{j-1}_{\omega_j}=\frac{\omega_1}{j!}.
\end{equation}
The formula above holds trivially when $j=1$. Then, let us suppose the formula true for a generic $j\in\mathbb N\setminus\{0\}$ and prove it for $j+1$. By direct integration, one notice that
$$
\cJ^1_{\omega_j}=\frac{\omega_{j-1}}{j}.
$$
Then
\begin{equation*}
\cJ^{(j+1)-1}_{\omega_{j+1}}=\cJ^{j-1}_{\cJ^1_{\omega_{j+1}}}=\cJ^{j-1}_{\frac{\omega_j}{j+1}}=\frac{1}{j+1}\cJ^{j-1}_{\omega_j}=\frac{\omega_1}{(j+1)!}.
\end{equation*}
Hence $\omega_j\in\mathcal D_j$. Let us remark that \eqref{formulaLog} also says that $\cJ^j_{\omega_j}$ does not satisfy $(D2)$ anymore; that is, $\omega_j\not\in\mathcal D_{j+1}$.
\end{example}

The following auxiliary lemma will be needed in the next part of this section.

\begin{lemma}
    \label{lem:j-Dini}
For $j\in\mathbb N\setminus\{0\}$, if $\sigma$ is a $j$-Dini function satisfying $(D3)$ for a chosen $\al\in(0,1)$, then also $\tilde\sigma=\tilde\sigma_{\al,\kappa}$ defined in \eqref{tildesigma} is $j$-Dini for any $0<\ka<1$.
\end{lemma}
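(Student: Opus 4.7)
The plan is to recast the definition of $\tilde\sigma$ in a cleaner form and then verify the three defining properties of a $j$-Dini function termwise, reducing everything to summability of series with the geometric factor $\kappa^{i\alpha}$. Define $\bar\sigma:(0,\infty)\to\R$ by $\bar\sigma(s):=\sigma(\min\{s,1\})$, so that
$$
\tilde\sigma(r)=\sum_{i=0}^\infty \kappa^{i\alpha}\,\bar\sigma(\kappa^{-i}r).
$$
In this form each property becomes a statement about individual summands plus a bounded-convergence bookkeeping.

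For $(D1)$, each summand is continuous and nondecreasing in $r$ (since $\bar\sigma$ inherits monotonicity from $\sigma$ and is constant beyond $1$), and is pointwise dominated by the summable bound $\kappa^{i\alpha}\sigma(1)$; the Weierstrass M-test then gives uniform convergence on $[0,1]$, yielding continuity, monotonicity, and $\tilde\sigma(0)=0$ by termwise vanishing. Positivity for $r>0$ follows from the $i=0$ term being exactly $\sigma(r)$. For $(D3)$ with the same $\alpha$, one writes
$$
\frac{\tilde\sigma(r)}{r^\alpha}=\sum_{i=0}^\infty \frac{\bar\sigma(\kappa^{-i}r)}{(\kappa^{-i}r)^\alpha},
$$
so it is enough to observe that $s\mapsto \bar\sigma(s)/s^\alpha$ is nonincreasing on $(0,\infty)$: on $(0,1]$ this is $(D3)$ for $\sigma$, and on $[1,\infty)$ the ratio equals $\sigma(1)/s^\alpha$. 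Each summand is thus nonincreasing in $r$, hence so is their sum.

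The core step is the integrability of $\mathcal{J}^{j-1}_{\tilde\sigma}$. The plan is first to establish, by Tonelli together with the change of variable $u=\kappa^{-i}s$ (which preserves $ds/s$), the identity
$$
\mathcal{J}^k_{\tilde\sigma}(r)=\sum_{i=0}^\infty \kappa^{i\alpha}\,\mathcal{J}^k_{\bar\sigma}(\kappa^{-i}r),\qquad k\ge 1,
$$
proved by induction on $k$. Next, a direct induction on $k$ shows that, for $R\ge 1$, $\mathcal{J}^k_{\bar\sigma}(R)$ is a polynomial of degree at most $k$ in $\log R$ whose nonnegative coefficients are expressible in terms of the finite quantities $\mathcal{J}^l_\sigma(1)$ for $0\le l\le k$, since on $[1,R]$ the integrand reduces to $\sigma(1)/s$ times a logarithmic polynomial. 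Setting $R=\kappa^{-i}$ and $k=j$ one gets $\mathcal{J}^j_{\bar\sigma}(\kappa^{-i})\le C_j(1+i)^j$, and therefore
$$
\mathcal{J}^j_{\tilde\sigma}(1)\le C_j\sum_{i=0}^\infty \kappa^{i\alpha}(1+i)^j<\infty,
$$
which is the required condition $\mathcal{J}^j_{\tilde\sigma}(1)<\infty$; i.e., $\mathcal{J}^{j-1}_{\tilde\sigma}$ satisfies $(D2)$.

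The main obstacle is not any single step but the propagation of the polynomial-in-$\log R$ bound on $\mathcal{J}^k_{\bar\sigma}$ through all $k\le j$ while keeping track of the growing polynomial degree; the Tonelli commutation is routine thanks to nonnegativity. It is crucial that $(D3)$ is available for a chosen $\alpha<1$ (rather than $\alpha=1$), so that the geometric factor $\kappa^{i\alpha}$ decays strictly faster than any polynomial growth $(1+i)^j$, absorbing the logarithmic blow-up of $\mathcal{J}^k_{\bar\sigma}(\kappa^{-i})$ and giving finiteness of the final series.
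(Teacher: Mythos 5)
Your proposal is correct, and for the central step $(D2)$ it follows essentially the same route as the paper: interchange the sum with the iterated integrals (Tonelli plus the scaling invariance of $ds/s$), compute $\cJ^k$ of the rescaled modulus explicitly as a polynomial in $\log R$ with coefficients $\cJ^l_\sigma(1)$, and then sum $\ka^{i\al}$ against the resulting polynomial in $i$. The only cosmetic difference there is that you extend $\sigma$ by the constant $\sigma(1)$ beyond $1$, which adds one extra term $\sigma(1)(\log R)^k/k!$ of top degree, whereas the paper truncates with an Iverson bracket and splits off the tail as a separate H\"older-type summand $Nt^\gamma$; both yield the same $O((1+i)^j)$ bound and the same conclusion. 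Where you genuinely diverge is $(D3)$: your observation that $s\mapsto\bar\sigma(s)/s^\al$ is nonincreasing on all of $(0,\infty)$ (by $(D3)$ on $(0,1]$ and explicitly on $[1,\infty)$), so that $\tilde\sigma(r)/r^\al$ is a sum of nonincreasing functions, is shorter and cleaner than the paper's two-case chaining argument through consecutive powers of $\ka$. One small quibble: your closing remark that $\al<1$ is "crucial" for absorbing the polynomial growth is not quite right — $\sum_i(1+i)^j\ka^{i\al}$ converges for any $\al>0$, including $\al=1$; the restriction $\al\in(0,1)$ matters elsewhere in the paper (e.g., in the Campanato iteration), not in this summability step. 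This does not affect the validity of your argument.
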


\begin{proof}
We divide our proof into three steps, one for each condition $(D1)$, $(D2)$, $(D3)$ in Definition~\ref{dinidef}.

\medskip\noindent\emph{Step 1.} We prove $(D1)$. Clearly, $\tilde\sigma$ is nondecreasing, $\tilde\sigma(0)=\sum_{i=0}^\infty \ka^{i\al}\sigma(0)=0$ and by definition $\tilde \sigma(r)>0$ for $r>0$. Moreover, $\tilde\sigma$ is continuous as it is a uniform limit of a sequence of continuous functions.

\medskip\noindent\emph{Step 2.}
In this step we prove $(D2)$. We first note that 
$$
\tilde\sigma(t)\le\sum_{i=0}^\infty\ka^{i\al}\sigma(\ka^{-i}t)[\ka^{-i}t\le1]+Nt^\gamma
$$
for some constants $N>0$ and $\gamma>0$, depending on $\al$, $\ka$, $\sigma(1)$. As the Hölder continuous function $t\longmapsto Nt^\gamma$ clearly satisfies $(D2)$ together with any $\cJ_{Nt^\gamma}^i=Nt^\gamma /\gamma^i$, we only need to prove that $\cJ_{\bar\sigma}^j(1)<\infty$, where $\cJ_{\bar\sigma}^j$ is as in \eqref{Jomega} with $\bar\sigma(t)=\sum_{i=0}^\infty\ka^{i\al}\sigma(\ka^{-i}t)[\ka^{-i}t\le1]$. To this aim, we observe that
\begin{align}
    \label{eq:k-dini}
    \begin{split}
&\cJ_{\bar\sigma}^j(1)\\    
&=\int_0^1\frac1{r_1}\int_0^{r_1}\frac1{r_2}\int_0^{r_2}\cdots\frac1{r_{j-1}}\int_0^{r_{j-1}}\sum_{i=0}^\infty\frac{\ka^{i\al}\sigma(\ka^{-i}r_j)}{r_j}[\ka^{-i}r_j\le1]dr_j\cdots dr_3dr_2dr_1\\
&=\sum_{i=0}^\infty\ka^{i\al}\left(\int_0^1\frac1{r_1}\int_0^{r_1}\frac1{r_2}\int_0^{r_2}\cdots\frac1{r_{j-1}}\int_0^{r_{j-1}}\frac{\sigma(\ka^{-i}r_j)}{r_j}[\ka^{-i}r_j\le1]dr_j\cdots dr_3dr_2dr_1\right)\\
&=:\sum_{i=0}^\infty\ka^{i\al}\mathbb{A}^j_i.
\end{split}\end{align}
Now we compute $\mathbb{A}^j_i$ for $i\ge0$, $j\ge1$. For this purpose, we consider a function $\cA^j_i(t)$, $0<t\le1$, defined by 
\begin{align*}
    \begin{cases}
    \cA^1_i(t):=\int_0^t\frac{\sigma(\ka^{-i}r_1)}{r_1}[\ka^{-i}r_1\le1]\,dr_1,&j=1,\\
    \cA^j_i(t):=\int_0^t\frac1{r_1}\int_0^{r_1}\frac1{r_2}\int_0^{r_2}\cdots\frac1{r_{j-1}}\int_0^{r_{j-1}}\frac{\sigma(\ka^{-i}r_j)}{r_j}[\ka^{-i}r_j\le1]dr_j\cdots dr_3dr_2dr_1,& j\ge2.
    \end{cases}
\end{align*}
Notice that $\mathbb{A}^j_i=\cA^j_i(1)$. By applying the change of variable formula repeatedly, one can get 
\begin{align}
    \label{eq:k-dini-ftn}
    \begin{cases}
\cA^1_i(t)=\int_0^{\ka^{-i}t}\frac{\sigma(s_1)}{s_1}[s_1\le1]\,ds_1,&j=1,\\
\cA^j_i(t)=\int_0^{\ka^{-i}t}\frac1{s_1}\int_0^{s_1}\frac1{s_2}\int_0^{s_2}\cdots\frac1{s_{j-1}}\int_0^{s_{j-1}}\frac{\sigma(s_j)}{s_j}[s_j\le1]\,ds_{j-1}\cdots ds_3ds_2ds_1,&j\ge2.
    \end{cases}
\end{align}

We now claim that 
\begin{align}
    \label{eq:k-dini-induc}
    \cA^j_i(t)=\begin{cases}
        \cJ^j_\sigma(\ka^{-i}t),&0<t\le \ka^i,\\
        \sum_{m=1}^j\frac{\cJ^m_\sigma(1)}{(j-m)!}\left(\log(t/\ka^i)\right)^{j-m},&\ka^i<t\le1.
    \end{cases}
\end{align}
Indeed, if $0<t\le\ka^i$, then $\ka^{-i}t\le 1$, thus the equality $\cA^j_i(t)=\cJ^j_\sigma(\ka^{-i}t)$ simply follows from \eqref{eq:k-dini-ftn} and the definition of $\cJ^j_{\sigma}$ (as the Iverson bracket $[s_j\le1]$ is $1$ in \eqref{eq:k-dini-ftn} if $s_j=\ka^{-i}t\le1$). Thus we focus on \eqref{eq:k-dini-induc} when $t\in(\ka^i,1]$. We prove it by induction on $j\in\mathbb{N}\setminus\{0\}$. When $j=1$, using $\ka^{-i}t>1$, we get 
$$
\cA^1_i(t)=\int_0^{\ka^{-i}t}\frac{\sigma(s_1)}{s_1}[s_1\le 1]\,ds=\int_0^1\frac{\sigma(s_1)}{s_1}\,ds_1=\cJ^1_\sigma(1).
$$
We next assume that \eqref{eq:k-dini-induc} is true for $j$, and prove it for $j+1$. We observe that 
\begin{align*}
    \cA^{j+1}_i(t)&=\int_0^t\frac{\cA^j_i(r)}r\,dr=\int_0^{\ka^i}\frac{\cA^j_i(r)}r\,dr+\int_{\ka^i}^t\frac{\cA^j_i(r)}r\,dr\\
    &=\int_0^{\ka^i}\frac{\cJ^j_\sigma(\ka^{-i}r)}r\,dr+\int_{\ka^i}^t\frac1r\sum_{m=1}^j\frac{\cJ^m_\sigma(1)}{(j-m)!}\left(\log(r/\ka^i)\right)^{j-m}\,dr.
\end{align*}
We compute 
\begin{align*}
    \int_0^{\ka^i}\frac{\cJ^j_\sigma(\ka^{-i}r)}r\,dr=\int_0^1\frac{\cJ^j_\sigma(s)}s\,ds=\cJ^{j+1}_\sigma(1),
\end{align*}
and 
\begin{align*}
    \int_{\ka^i}^t\frac1r\sum_{m=1}^j\frac{\cJ^m_\sigma(1)}{(j-m)!}\left(\log(r/\ka^i)\right)^{j-m}\,dr&=\sum_{m=1}^j\cJ^m_\sigma(1)\int_{\ka^i}^t\frac{\left(\log(r/\ka^i)\right)^{j-m}}{(j-m)!r}\,dr\\
    &=\sum_{m=1}^j\cJ^{m}_\sigma(1)\left[\frac{\left(\log(r/\ka^i)\right)^{j+1-m}}{(j+1-m)!}\right]_{\ka^i}^t\\
    &=\sum_{m=1}^j\frac{\cJ^m_\sigma(1)}{(j+1-m)!}\left(\log(r/\ka^i)\right)^{j+1-m}.
\end{align*}
Combining the above equalities yields,
\begin{align*}
    \cA^{j+1}_i(t)&=\cJ^{j+1}_\sigma(1)+\sum_{m=1}^j\frac{\cJ^m_\sigma(1)}{(j+1-m)!}\left(\log(r/\ka^i)\right)^{j+1-m}\\
    &=\sum_{m=1}^{j+1}\frac{\cJ^m_\sigma(1)}{(j+1-m)!}\left(\log(r/\ka^i)\right)^{j+1-m}.
\end{align*}
This means that \eqref{eq:k-dini-induc} is true for $j+1$, and thus the claim \eqref{eq:k-dini-induc} is proved. 

Now we are ready to complete the proof of $(D2)$. Based on \eqref{eq:k-dini}, $(D2)$ follows once we show that $\sum_{i=0}^\infty\ka^{i\al}\mathbb{A}^j_i<\infty$. To this aim, we use \eqref{eq:k-dini-induc} to have 
\begin{align*}
    \mathbb{A}^j_i&=\cA^j_i(1)=\sum_{m=1}^j\frac{\cJ^m_\sigma(1)}{(j-m)!}\left(\log(1/\ka^i)\right)^{j-m}=\sum_{m=1}^j\frac{\cJ^m_\sigma(1)}{(j-m)!}(-\log \ka)^{j-m}i^{j-m}.
\end{align*}
This, along with the fact that $\sum_{i=0}^\infty i^{j-m}\ka^{i\al}\le C(j,m,\ka,\al)<\infty$ for every $1\le m\le j$, concludes
\begin{align*}
    \sum_{i=0}^\infty\ka^{i\al}\mathbb{A}^j_i=\sum_{m=1}^j\frac{\cJ^m_\sigma(1)}{(j-m)!}(-\log \ka)^{j-m}\left(\sum_{i=0}^\infty i^{j-m}\ka^{i\al}\right)<\infty.
\end{align*}

\medskip\noindent\emph{Step 3.} Finally, we show that $\tilde\sigma$ enjoys $(D3)$. To prove it, let $0<r<R$ be given.

We first consider the simple case that $\ka^{N+1}\le r<R\le \ka^N$ for some nonnegative integer $N$. If $N=0$, then $\ka\le r<R$, thus $\tilde\sigma(r)=\sum_{i=1}^\infty\ka^{i\al}\sigma(1)=\tilde\sigma(R)$, hence $\tilde\sigma(r)\ge(r/R)^\al\tilde\sigma(R)$ holds. Thus, we may assume $N\ge1$. Then we can write 
\begin{align*}
    &\tilde\sigma(r)=\sum_{i=1}^N\ka^{i\al}\sigma(\ka^{-i}r)+\sum_{i=N+1}^\infty\ka^{i\al}\sigma(1),\\
    &\tilde\sigma(R)=\sum_{i=1}^N\ka^{i\al}\sigma(\ka^{-i}R)+\sum_{i=N+1}^\infty\ka^{i\al}\sigma(1).
\end{align*}
This readily gives $\tilde\sigma(r)\ge(r/R)^\al\tilde\sigma(R)$, since $\sigma(\ka^{-i}r)\ge(r/R)^\al\sigma(\ka^{-i}R)$ and $1\ge(r/R)^\al$.

Next, for the general case $0<r<R<1$, not falling into the first simple case, we can find two nonnegative integers $N$ and $M$ such that $\ka^{M+1}<r\le \ka^M<\ka^N\le R<\ka^{N-1}$. Then the first case gives \begin{align*}
    &\tilde\sigma(r)\ge \left(\frac{r}{\ka^M}\right)^\al\tilde\sigma(\ka^M),\quad\tilde\sigma(\ka^M)\ge\left(\frac{\ka^M}{\ka^{M-1}}\right)^\al\tilde\sigma(\ka^{M-1}),\,\cdots\,,\\
    &\tilde\sigma(\ka^{N+1})\ge\left(\frac{\ka^{N+1}}{\ka^N}\right)^\al\tilde\sigma(\ka^N),\quad\tilde\sigma(\ka^N)\ge\left(\frac{\ka^N}{R}\right)^\al\tilde\sigma(R),
\end{align*}
which implies $\tilde\sigma(r)\ge(r/R)^\al\tilde\sigma(R)$.
\end{proof}

\subsection{Schauder estimates for uniformly elliptic problems in Dini type domains}
Aim of this section is to prove general Schauder estimates up to the boundary for uniformly elliptic equations with coefficients, data and boundaries in $C^{k,\omega}$ spaces with $\omega$ a $j$-Dini function. The result is based on a bootstrap argument for higher derivatives of a given solution. Hence, as a starting point, we need to discuss about the following $C^1$ regularity result provided in \cite{DonEscKim18}*{Proposition~2.7}.

We would like to stress the fact that the results contained in the present section hold true even without requiring the symmetry of variable coefficients $A=A^T$.

\begin{proposition}\label{prop:reg}
    Suppose $A$ and $\bg$ are of Dini mean oscillation. If $u\in H^{1}(B_1^+)$ is a weak solution of 
    \begin{equation}\label{EQHD}
    \begin{cases}
    -\div(A\D u)=\div \bg  &\text{in }B^+_{1}\\
    u=0 &\text{on }B'_1,
    \end{cases}
    \end{equation}
   then $u\in C^1_\loc(B_{1}^+\cup B'_1)$.
\end{proposition}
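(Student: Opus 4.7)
The plan is to establish $C^1$ regularity at each $x_0 \in B_1^+ \cup B_1'$ by a Campanato-type perturbation argument with frozen coefficients, which is the standard route for Dini-type data. I focus on the more delicate boundary case $x_0 \in B_1'$, the interior case being analogous. Set $D_r := B_r(x_0) \cap B_1^+$. Because $u$ vanishes on $B'_1$, I approximate $u$ on $D_r$ by functions vanishing on the flat boundary, namely $\ell(x) = q\,x_n$ with $q \in \R$, and measure the excess
$$
\phi(x_0,r) := \inf_{q \in \R}\, r^{-n-2} \int_{D_r} |u(x) - q\,x_n|^2\,dx.
$$
The goal is a one-step decay
$$
\phi(x_0,\theta r) \le \theta^{2\gamma}\phi(x_0,r) + C\bigl(\omega_A(r)^2 + \omega_{\bg}(r)^2\bigr)
$$
for some $\theta,\gamma \in (0,1)$ and a uniform constant $C$, after which iteration and the Dini integrability $(D2)$ of $\omega_A+\omega_{\bg}$ will close the argument.

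To produce the decay I would freeze coefficients: let $\bar A := (A)_{D_r}$, $\bar \bg := (\bg)_{D_r}$, and let $v \in H^1(D_r)$ solve
$$
-\div(\bar A\,\D v) = 0 \quad \text{in } D_r, \qquad v = u \text{ on } \partial D_r \cap \overline{B_1^+}, \qquad v = 0 \text{ on } B'_r(x_0).
$$
Since $\bar A$ is constant and $v$ vanishes on a flat portion of $\partial D_r$, odd reflection across $\{x_n = 0\}$ turns $v$ into the solution of a constant-coefficient equation on a full ball, so $v$ is $C^{1,\gamma}$ up to $B'_r(x_0)$. Taylor expansion at $x_0$ (where tangential derivatives vanish by the boundary condition) gives, for the best approximant $\ell^*(x) = \partial_n v(x_0)\,x_n$,
$$
\dashint_{D_{\theta r}} |v - \ell^*|^2 \le C\theta^{2(1+\gamma)} \dashint_{D_r}|v - \ell^*|^2.
$$
Meanwhile $u - v$ has zero trace on $\partial D_r$ and solves
$$
-\div\bigl(\bar A\,\D(u-v)\bigr) = \div\bigl((A - \bar A)\D u + (\bg - \bar \bg)\bigr),
$$
so testing against $u-v$, together with Poincar\'e, a Caccioppoli bound for $\D u$, and the definitions of $\omega_A,\omega_{\bg}$, yields $\dashint_{D_r}|u-v|^2 \le Cr^2(\omega_A(r)^2 + \omega_{\bg}(r)^2)$. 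Combining this with the decay of $v$ and noting that $\ell^*$ is a competitor for $\phi(x_0,\theta r)$ gives the announced one-step estimate.

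Iterating along the geometric sequence $r_i := \theta^i r_0$, the Dini mean oscillation of $A$ and $\bg$, together with Remark~\ref{remarkJalpha}(8), give $\sum_i(\omega_A(r_i) + \omega_{\bg}(r_i)) < \infty$, so the minimizing slopes $q^*_{r_i}$ are Cauchy, converging to some $q^*(x_0) \in \R$, while $r^{-2}\phi(x_0,r) \to 0$ as $r \to 0^+$. A Campanato--Morrey type argument then identifies $q^*(x_0) = \partial_n u(x_0)$ and upgrades the decay to continuity of $\D u$ on $B_1^+ \cup B_1'$, with modulus of continuity controlled by $\cJ^1_{\omega_A + \omega_{\bg}}$. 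The main subtlety is preserving the homogeneous Dirichlet condition throughout the perturbation: both the class of affine approximants and the competitor $v$ must vanish on $B'_r$, and the odd-reflection trick (available only because the frozen matrix $\bar A$ is constant) is what allows one to reduce the boundary $C^{1,\gamma}$ estimate for the frozen problem back to the classical interior estimate on a reflected full ball.
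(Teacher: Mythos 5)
The paper does not prove this proposition at all: it is quoted verbatim from Dong--Escauriaza--Kim \cite{DonEscKim18}*{Proposition~2.7}, so your attempt should be measured against that reference. Your skeleton (boundary Campanato iteration against competitors $qx_n$, frozen-coefficient comparison, reflection, Dini summation of the excess) is the right general shape, but the central step fails as written. You claim that testing the equation for $u-v$ against itself, plus Poincar\'e and Caccioppoli, yields $\dashint_{D_r}|u-v|^2\le Cr^2(\omega_A(r)^2+\omega_{\bg}(r)^2)$. The energy estimate gives $\|\D(u-v)\|_{L^2(D_r)}\lesssim\|(A-\bar A)\D u\|_{L^2(D_r)}+\|\bg-\bar\bg\|_{L^2(D_r)}$, and under the \emph{Dini mean oscillation} hypothesis you only control the $L^1$ averages $\dashint_{D_r}|A-\bar A|\le\omega_A(r)$ and $\dashint_{D_r}|\bg-\bar\bg|\le\omega_{\bg}(r)$, not the $L^\infty$ (or even $L^2$) deviations. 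Interpolating with the bound $|A-\bar A|\le2\Lambda$ gives only $(\dashint|A-\bar A|^2)^{1/2}\lesssim\omega_A(r)^{1/2}$, and pairing with $\D u\in L^{2+\e}$ (Meyers) costs a further small power; since Dini integrability of $\omega_A$ does not imply that of $\omega_A^{\delta}$ for $\delta<1$, the iteration no longer closes. This loss is precisely what distinguishes Dini mean oscillation from uniform Dini continuity (for which $\|A-\bar A\|_{L^\infty(D_r)}\le\eta_A(r)$ and your argument would be fine), and it is why \cite{DonEscKim18} replace the $L^2$ comparison by weak type-$(1,1)$ estimates for the map $\bg\mapsto\D u$ and run the iteration in $L^p$ quasi-norms with $p<1$, where the comparison $\left(\dashint_{D_r}|\D u-\D v|^p\right)^{1/p}\lesssim\omega_A(r)+\omega_{\bg}(r)$ does hold linearly.

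A secondary, fixable imprecision: odd reflection of $v$ across $\{x_n=0\}$ does not produce a solution of the \emph{same} constant-coefficient equation unless $\bar a_{in}=0$ for $i<n$; one must first apply a shear preserving the hyperplane to normalize these entries (or simply invoke classical boundary Schauder estimates for constant-coefficient operators with zero Dirichlet data on a flat boundary). Your Taylor-polynomial competitor $\ell^*(x)=\partial_nv(x_0)x_n$ and the identification of the limiting slopes are otherwise standard and correct.
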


After a careful analysis of the modulus of continuity $\sigma_u$ of $\nabla u$ (to avoid bulky notation we simply write $u$ instead of $\D u$ in the subscript), one can imply the following

\begin{corollary}\label{cor:reg}
Let $j\in\mathbb N\setminus\{0\}$. Suppose $A$ and $\bg$ are of $j$-Dini mean oscillation. If $u\in H^{1}(B_1^+)$ is a weak solution of \eqref{EQHD}, then $u\in C^{1,\sigma_u}_\loc(B_{1}^+\cup B'_1)$ where the modulus $\sigma_u$ is a $(j-1)$-Dini function.
\end{corollary}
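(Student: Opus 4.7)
The plan is to extract a quantitative version of the DEK argument in the proof of Proposition \ref{prop:reg}, making explicit the modulus of continuity of $\nabla u$ in terms of the mean-oscillation modulus $\omega := \omega_A+\omega_{\bg}$ of the data. A finite sum of $j$-Dini functions is $j$-Dini, hence $\omega$ is itself $j$-Dini, and by Proposition \ref{alphaincreasing} one may assume that $\omega$ satisfies $(D3)$ for any chosen $\alpha\in(0,1)$. I would fix $\alpha$ no larger than the De Giorgi--Nash--Moser exponent $\alpha_0$ appearing in the $C^{1,\alpha_0}$ estimate for the homogeneous frozen problem.

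For $x_0\in B_{1/2}^+\cup B_{1/2}'$ and small $r>0$, let $Q_{x_0,r}$ denote the gradient of the best $L^2$-affine approximation of $u$ on $B_r(x_0)\cap B_1^+$. The DEK scheme (freezing the coefficients at $x_0$, comparing $u$ with the solution to the frozen homogeneous problem, and combining Caccioppoli with the $C^{1,\alpha_0}$ estimate for that frozen problem) produces an iteration
\[
\dashint_{B_{\kappa r}(x_0)\cap B_1^+}|\nabla u-Q_{x_0,\kappa r}|\,dx\le\kappa^{\alpha}\dashint_{B_{r}(x_0)\cap B_1^+}|\nabla u-Q_{x_0,r}|\,dx+C\omega(r)
\]
for a suitable small $\kappa\in(0,1)$. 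Iterating at dyadic scales $r=\kappa^k$, together with a standard comparison of affine approximations at consecutive scales, yields
\[
|Q_{x_0,\kappa^{k+1}}-Q_{x_0,\kappa^k}|\le C\,\tilde\omega_{\alpha,\kappa}(\kappa^k),
\]
with $\tilde\omega_{\alpha,\kappa}$ as in \eqref{tildesigma}. By Lemma \ref{lem:j-Dini}, $\tilde\omega_{\alpha,\kappa}$ is itself $j$-Dini.

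Telescoping the Cauchy sequence $\{Q_{x_0,\kappa^k}\}_k$, whose limit is $\nabla u(x_0)$, and applying Remark \ref{remarkJalpha}(8) with $c_0=1$, $\lambda=\kappa$ gives
\[
|\nabla u(x_0)-Q_{x_0,\kappa^k}|\le C\sum_{i\ge k}\tilde\omega_{\alpha,\kappa}(\kappa^i)\le\frac{C}{1-\kappa}\,\cJ^1_{\tilde\omega_{\alpha,\kappa}}(\kappa^k).
\]
A standard comparison of $Q_{x,r}$ and $Q_{y,r}$ for $|x-y|\sim r$ then shows that $\nabla u$ admits the modulus of continuity $\sigma_u(r)=C\,\cJ^1_{\tilde\omega_{\alpha,\kappa}}(r)$.

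To conclude, since $\tilde\omega_{\alpha,\kappa}$ is $j$-Dini, Remark \ref{remarkJalpha}(7) directly yields that $\cJ^1_{\tilde\omega_{\alpha,\kappa}}$ is $(j-1)$-Dini, and therefore so is $\sigma_u$. The main delicate point is the quantitative bookkeeping of the Campanato iteration: one must verify that the telescoping of affine approximations costs exactly one $\cJ^1$-integration of the mean-oscillation modulus $\tilde\omega_{\alpha,\kappa}$, which is precisely what accounts for the single level of Dini integrability lost in passing from data to $\nabla u$.
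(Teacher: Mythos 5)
Your proposal is correct and follows essentially the same route as the paper: the paper simply cites the quantitative gradient estimate (2.36) of Dong--Escauriaza--Kim (which encodes exactly the Campanato iteration you sketch) and then simplifies the resulting moduli $\omega_A^*,\omega_{\bg}^*$ to $\cJ^1_{\tilde\omega}$ using the $\alpha$-nonincreasing property from Lemma \ref{lem:j-Dini}, before invoking Remark \ref{remarkJalpha}(7) to pass from $j$-Dini to $(j-1)$-Dini. The decisive ingredients --- the appearance of $\tilde\omega_{\al,\ka}$ from the telescoping, Lemma \ref{lem:j-Dini}, and the single $\cJ^1$-integration accounting for the lost level of Dini integrability --- are identical in both arguments.
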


Let us remark that the case $j=1$ corresponds to Proposition \ref{prop:reg} and that the case $j=2$ is studied also in \cite[Theorem 2.5]{LaMLeoSch20} assuming uniform double Dini continuity of data.
To prove Corollary~\ref{cor:reg}, we need to combine Lemma~\ref{lem:j-Dini} with the result in \cite{DonEscKim18}.

\begin{proof}[Proof of Corollary~\ref{cor:reg}]
Based on the equation (2.36) in \cite{DonEscKim18}, for any constant $\al\in(0,1)$ the modulus of continuity $\sigma_u$ of $\D u$ is bounded by
\begin{align}\label{eq:mod-conti-sol-grad}
\sigma_{u}(r)\le C(r^\alpha+\omega_A^*(r)+\omega_\bg^*(r)),\quad 0<r<1, 
\end{align}
for some constant $C>0$, independent of $r$ but depending on $\alpha,\kappa$ and $H^1$-norm of $u$. Here, the moduli of continuity $\omega_A^*$, $\omega_\bg^*$ are defined (in \cite{DonEscKim18}) as follows: we first let 
\begin{equation}\label{sharp}
\hat\omega_A(r):=\tilde\omega_A(r)+\tilde\omega_A(4r)+\omega^\sharp_A(4r),\quad\text{where }  \omega^\sharp_A(r):=
\sup_{s\in[r,1]}\left(\left(\frac rs\right)^\alpha\tilde\omega_A(s)\right).
\end{equation}
$\tilde\omega_A$ is as in \eqref{tildesigma} with $\al$ as above and proper $\ka\in(0,1)$ depending on $\al$.
Then $\omega_A^*(r)$ is
$$
\omega^*_A(r):=\hat\omega_A(r)+\tilde\omega_A(4r)+\int_0^r\frac{\tilde\omega_A(s)}s\,ds+\int_0^r\frac{\tilde\omega_A(4s)}s\,ds.
$$
$\omega_\bg^*$ is defined in a similar way.

In our setting, the moduli $\omega_A^*$, $\omega_\bg^*$ can be simplified. In fact, the $\al$-nonincreasing condition holds for $\tilde\omega_A$ by Lemma~\ref{lem:j-Dini}. It then follows that $\omega_A^\sharp=\tilde\omega_A$ by definition of $\omega_A^\sharp$, and that $\frac{\tilde\omega_A(4r)}{(4r)^\al}\le \frac{\tilde\omega_A(r)}{r^\al}$, which is equivalent to $\tilde\omega_A(4r)\le 4^\al\tilde\omega_A(r)$. Thus we have
$$
\hat\omega_A(r)\le C\tilde\omega_A(r),
$$
which in turn gives 
$$
\omega_A^*(r)\le C\tilde\omega_A(r)+C\int_0^r\frac{\tilde\omega_A(s)}{s}\,ds\le C\int_0^r\frac{\tilde\omega_A(s)}{s}\,ds,
$$
where used in the last inequality the point $(6)$ in Remark~\ref{remarkJalpha}. The above observation implies that we can redefine $\hat\omega_A(r)$, $\omega_A^*$ in \eqref{eq:mod-conti-sol-grad} by 
\begin{align}
    \label{eq:w-*}
    \hat\omega_A(r):=\tilde\omega_A(r),\quad \omega_A^*(r):=\int_0^r\frac{\tilde\omega_A(s)}s\,ds=\cJ_{\tilde\omega_A}(r).
\end{align}
As $\tilde\omega_A$ is $j$-Dini (by Lemma~\ref{lem:j-Dini}), $\omega^*_A(r)$ is $(j-1)$-Dini. We can repeat the above process with $\omega_\bg^*$, and conclude that $\sigma_u$ is a $(j-1)$-Dini function.
\end{proof}

Now we are ready to state and prove boundary Schauder estimates for higher derivatives. Let $0\in\partial\Omega$ and consider a solution $u\in H^1(\Omega\cap B_1)$ to
\begin{equation*}
\begin{cases}
-\div(A\nabla u)=\div \bg &\mathrm{in \ }\Omega\cap B_1\\
u=0 &\mathrm{on \ }\partial\Omega\cap B_1.
\end{cases}
\end{equation*}
Let $k\in\mathbb N$, $j\in\mathbb N\setminus\{0\}$. If $\partial\Omega\in C^{k+1,\omega}$, $A,\bg\in C^{k,\omega}(\Omega\cap B_1)$ with $\omega$ a $j$-Dini function, then we will prove that $u\in C^{k+1,\sigma_u}_\loc(\overline\Omega\cap B_1)$ with $\sigma_u$ a $(j-1)$-Dini function. This is done after a standard flattening on $\partial\Omega$, which reduces everything to prove Schauder estimates for solutions to \eqref{EQHD}. Let us remark that this standard procedure with the straightening diffeomorphism $\Phi$ in \eqref{diffeo} will be introduced and discussed in details later in Section \ref{sec:4}.

\begin{corollary}\label{cor:reg:k}
Let $k\in\mathbb N$ and $j\in\mathbb N\setminus\{0\}$. Suppose $A,\bg\in C^k(\overline{B_1^+})$ with $D^kA$, $D^k\bg$ being of $j$-Dini mean oscillation. If $u\in H^{1}(B_1^+)$ is a weak solution of \eqref{EQHD}, then $u\in C^{k+1,\sigma}_\loc(B_{1}^+\cup B'_1)$ where the modulus $\sigma$ is a $(j-1)$-Dini function.
\end{corollary}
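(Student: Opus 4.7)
We argue by induction on $k$. The base case $k=0$ is precisely Corollary~\ref{cor:reg}, so we focus on the inductive step: assuming the statement at level $k-1$, we derive it at level $k \ge 1$.

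For each tangential index $i \in \{1, \ldots, n-1\}$, set $v := \partial_i u$. Since $\partial_i$ commutes with $\div$ and $\partial_i$ is tangent to $B_1'$, the function $v$ satisfies the Dirichlet problem
\[
-\div(A \nabla v) \;=\; \div \tilde g_i \quad\text{in } B_1^+, \qquad v = 0 \quad\text{on } B_1',
\]
with $\tilde g_i := \partial_i \bg + (\partial_i A)\, \nabla u$. The plan is to verify that $A$ and $\tilde g_i$ satisfy the hypotheses of Corollary~\ref{cor:reg:k} at level $k-1$, and then invoke the inductive hypothesis on $v$, which will upgrade each tangential derivative of $u$ to $C^{k,\sigma}_\loc$ with $\sigma$ a $(j-1)$-Dini function.

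The hypothesis on the matrix is immediate: $A \in C^k$ makes $D^{k-1}A$ Lipschitz, hence of $j$-Dini mean oscillation. The main technical point is to show that $D^{k-1}\tilde g_i$ is of $j$-Dini mean oscillation. The contribution $D^{k-1}(\partial_i \bg) = D^k\bg$ is handled directly by hypothesis. For the remaining piece $D^{k-1}\!\bigl((\partial_i A) \nabla u\bigr)$, the Leibniz expansion produces a finite sum of products of the form $D^p A \cdot D^q u$ with $p + q = k+1$ and $1 \le p, q \le k$. Interior indices ($1 < p, q < k$) give products of two Lipschitz factors, since $D^p A$ is Lipschitz for $p<k$ by the $C^k$ hypothesis, while $D^q u$ is Lipschitz for $q<k$ by the inductive regularity $u \in C^{k,\sigma_0}$ previously obtained. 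The two endpoint products $(p,q) = (k,1)$ and $(p,q) = (1,k)$ pair a factor of $j$-Dini mean oscillation or Lipschitz regularity with one of uniform Dini continuity, and are controlled by the standard product estimate
\[
\omega_{fg}(r) \;\lesssim\; \|f\|_\infty\, \omega_g(r) \;+\; \|g\|_\infty\, \omega_f(r),
\]
together with the closure properties of the Dini hierarchy collected in Remark~\ref{remarkJalpha}, in particular $\mathcal{D}_j \subset \mathcal{D}_{j-1}$ and the fact that Lipschitz moduli belong to every $\mathcal{D}_j$.

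Once the inductive hypothesis is applied, we have $\partial_i u \in C^{k,\sigma}_\loc(B_1^+ \cup B_1')$ for each tangential direction, with $\sigma$ $(j-1)$-Dini. This controls every partial derivative $D^{k+1}u$ that contains at least one tangential index. To reach the purely normal derivative $\partial_n^{k+1} u$, expand the equation in non-divergence form and use the uniform ellipticity $a_{nn} \ge \Lambda^{-1}$ to solve algebraically
\[
a_{nn}\, \partial_n^2 u \;=\; -\!\!\!\sum_{(i,j)\neq(n,n)}\!\! a_{ij}\, \partial_i \partial_j u \;-\; \sum_{i,j} (\partial_i a_{ij})\, \partial_j u \;-\; \div \bg.
\]
Differentiating this identity tangentially $k-1$ times and substituting recursively expresses $\partial_n^{k+1} u$ as a linear combination of derivatives of $u$ of order $\le k+1$ involving at most $k-1$ normal indices, plus derivatives of the data, all of which have already been controlled. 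This closes the induction.

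The main obstacle is the careful bookkeeping of moduli in the endpoint Leibniz products: one must confirm that coupling a factor of $j$-Dini mean oscillation with one of merely $(j-1)$-Dini uniform continuity (the output at the previous stage) still yields right-hand-side data compatible with the $(j-1)$-Dini target modulus for $v$. This is where the full hierarchy machinery from Section~\ref{sec:2}, especially the preservation under the iterated operator $\mathcal{J}^j_\sigma$ and the embedding of Lipschitz moduli into every $\mathcal{D}_j$, plays the crucial role.
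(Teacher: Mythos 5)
Your overall strategy coincides with the paper's: induct on $k$, differentiate the equation tangentially so that $v=\partial_i u$ solves the same Dirichlet problem with data $\partial_i\bg+(\partial_iA)\nabla u$, and recover the purely normal derivative algebraically from the equation using $a_{nn}\ge\Lambda^{-1}$. However, the point you flag at the end as "the main obstacle" is a genuine gap, and the confirmation you hope for is false as stated. In the endpoint Leibniz term $D^1A\cdot D^k u$ (for $k=1$ this is the \emph{only} term), the best you know about $D^ku$ from the previous induction step is $(j-1)$-Dini uniform continuity, so the product estimate $\omega_{fg}(r)\lesssim\|f\|_\infty\omega_g(r)+\|g\|_\infty\omega_f(r)$ only yields that $D^{k-1}\tilde g_i$ is of $(j-1)$-Dini mean oscillation, not $j$-Dini. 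Feeding $(j-1)$-Dini data into the inductive hypothesis returns a $(j-2)$-Dini modulus for $\partial_i u$, so your scheme loses one order in the Dini hierarchy at every step and cannot reach the claimed $(j-1)$-Dini conclusion. No closure property of $\mathcal J^j_\sigma$ or embedding of Lipschitz moduli repairs this, because the deficient factor is $D^ku$ itself, which is neither Lipschitz nor better than $(j-1)$-Dini from your induction alone.

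The paper's fix is a preliminary upgrade that you omit: since $A,\bg\in C^{k}(\overline{B_1^+})$, in particular $A,\bg\in C^{k-1,1}\subset C^{k-1,\al}$, classical Schauder theory already gives $u\in C^{k,\al}_{\loc}$ for every $\al\in(0,1)$. Hence $D^k u$ is $\al$-H\"older, and H\"older moduli are of $j$-Dini mean oscillation for \emph{every} $j$; with this, every Leibniz product is (Lipschitz or $j$-Dini m.o.)$\,\times\,$(Lipschitz or H\"older), and $D^{k-1}\tilde g_i$ is genuinely of $j$-Dini mean oscillation, so the induction closes without degrading the modulus. Separately, your treatment of the normal derivative is slightly garbled: differentiating the non-divergence identity \emph{tangentially} $k-1$ times never produces $\partial_n^{k+1}u$ on the left. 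The paper's cleaner observation is that all derivatives $D^{k+1}u$ containing at least one tangential index are already controlled by $\partial_iu\in C^{k,\sigma}$, so one only needs $\partial_{nn}^2u\in C^{k-1,\sigma}$, which follows directly from the solved-for identity once each term on the right is checked to be $C^{k-1}$ with $(j-1)$-Dini continuous top derivatives (using that $j$-Dini mean oscillation implies $(j-1)$-uniform Dini continuity).
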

\proof
The proof is based on the induction argument in \cite[Proposition 3.3]{Vit22}. When $k=0$ the result follows by Corollary \ref{cor:reg}. Let us now suppose the result true for a certain $k\in\mathbb N$ and prove it for $k+1$; that is, we assume $A,g\in C^{k+1}(\overline{B_1^+})$ and $D^{k+1}A$, $D^{k+1}\bg$ are of $j$-Dini mean oscillation, and we want to prove $u\in C^{k+2,\sigma}$ for some $(j-1)$-Dini function $\sigma$. We remark that the thesis is equivalent to $u_i:=\partial_i u\in C^{k+1,\sigma}$ for any $i=1,...,n$. It is easy to see that any tangential derivative $u_i$ with $i=1,...,n-1$ is a solution to
$$
 \begin{cases}
 -\div(A\D u_i)=\div (\partial_i\bg+\partial_iA\nabla u)   & \text{in }B^+_{1}\\
 u_i=0 & \text{on }B'_{1}.
 \end{cases}  
 $$
 Let us remark here that the assumption $A,\bg\in C^{k+1}$ implies $A,\bg\in C^{k,1}$ which gives, by standard Schauder estimates that $u\in C^{k+1,\alpha}$ for any $\alpha\in(0,1)$ and hence $\D u\in C^k$ with $D^k(\D u)$ being of $j$-Dini mean oscillation. Then, the inductive hypothesis implies
 \begin{equation}\label{u_i}
 u_i\in C^{k+1,\sigma} \qquad\mathrm{for \  any \ } i=1,...,n-1.
 \end{equation}
 In order to prove that $u_n\in C^{k+1,\sigma}$ it is enough to prove that $u_{nn}:=\partial^2_{nn}u\in C^{k,\sigma}$ being $u_{ni}:=\partial^2_{ni}u\in C^{k,\sigma}$ for any $i=1,...,n-1$ already implied by \eqref{u_i}. Then, for this last partial derivative, one can rewrite equation \eqref{EQHD} obtaining
$$
 u_{nn}=-\frac{1}{a_{nn}}\left(\div \bg+\sum_{i=1}^{n-1}\partial_i(\mean{A\nabla u,\vec e_i})+\sum_{i=1}^{n-1}\partial_n(a_{ni}u_i)+\partial_na_{nn}u_n\right)\in C^{k,\sigma}.
$$
Indeed, from $a_{nn}=\mean{A\vec e_n,\vec e_n}\geq 1/\Lambda$ and $A\in C^{k+1}(\overline{B_1^+})$, we find $\frac1{a_{nn}}\in C^{k+1}(\overline{B_1^+})$, and thus $D^k\left(\frac1{a_{nn}}\right)$ is a $(j-1)$-uniformly Dini continuous function. Concerning the first term $\div \bg$ in the big parenthesis above, we have by the assumption hypothesis that $D^k(\div g)$ is $j$-Dini mean oscillation, and hence $(j-1)$-uniformly Dini continuous (see the proof of Lemma~A.1 in \cite{HwaKim20}). The other terms in the parenthesis can be shown in a similar way to be $C^k$-functions with their $k$-th derivatives being $(j-1)$-uniformly Dini continuous. 
\endproof

%
%
%


\section{Schauder estimates for degenerate equations with Dini type assumptions}\label{sec:3}
Aim of this section is the proof of the main Theorem \ref{teok}; that is, Schauder estimates for solutions to \eqref{evenLa3} under Dini type assumptions on coefficients and data. We would like to stress the fact that the regularity estimate we get still holds true when coefficients are not symmetric.

\subsection{The $C^1$ estimate}

For $a>-1$, let $w\in H^{1,a}(B_1^+)=H^1(B_1^+,x_n^adx)$ be a solution of \eqref{evenLa3}; that is,
\begin{align*}
-\div(x_n^aA\D w)=\div(x_n^a\bff),
\end{align*}
and formally satisfying a weighted Neumann boundary condition at $\Sigma=\{x_n=0\}$ \eqref{NeumannWBC}; that is,
\begin{equation*}
\lim_{x_n\to0^+}x_n^a\mean{A\nabla w+\bff, \vec e_n}=0.
\end{equation*}
The weighted Sobolev space $H^{1,a}(B_1^+)$ is defined as the completion of $C^\infty(\overline{B_1^+})$ functions with respect to the norm
\begin{equation*}
\|w\|^2_{H^{1,a}(B_1^+)}=\int_{B_1^+}x_n^a\left(w^2+|\nabla w|^2\right).
\end{equation*}
When the weight is \emph{superdegenerate} $a\geq1$, the space $H^{1,a}(B_1^+)$ is actually the completion of $C^\infty_0(\overline{B_1^+}\setminus\Sigma)$ functions, see \cite[Proposition 2.2]{SirTerVit21a}. Then, the weak formulation for \eqref{evenLa3} is given by
\begin{equation*}
-\int_{B_1^+}x_n^a\mean{A\nabla w,\nabla\phi}=\int_{B_1^+}x_n^a\mean{\bff,\nabla\phi}
\end{equation*}
for every test function $\phi\in C^{\infty}_0(B_1)$ (when $a\geq1$ test functions can be taken in $C^{\infty}_0(B_1^+)$).

\begin{remark}
In this section, any time we write that a weighted equation is satisfied on a half ball of type $B_r^+$, we mean that also a Neumann boundary condition is prescribed on the flat boundary $B'_r$.
\end{remark}

In the present section, we are going to prescribe traces on $\partial^+B_1^+$. Hence, we define $H^{1,a}_0(B_1^+\cup B'_1)$ as the completion of $C^\infty_0(B_1^+\cup B'_1)$ functions in the \emph{$A_2$-Muckenhoupt} range $a\in(-1,1)$, and of $C^\infty_0(B_1^+)$ functions in the \emph{superdegenerate} range $a\ge 1$. Let us remark here that in the latter range, due to the zero capacity of $\Sigma$, prescribing traces on $\Sigma$ does not have sense in general. Hence, $h=w$ on $\partial^+ B^+_1$ if and only if $w-h\in H^{1,a}_0(B_1^+\cup B'_1)$. The next result is a weighted Poincaré inequality for functions in $H^{1,a}_0(B_1^+\cup B'_1)$. We would like to remark that the result was known for general \emph{$A_2$-Muckenhoupt} weights (in our setting when $-1<a<1$), see \cite[Theorem 1.2]{FabKenSer82}. The proof we present works in the full range $a>-1$.

\begin{lemma}[Weighted Poincaré Inequality]\label{lem:Poincare}
Let $a>-1$. Then, if $r>0$ and $u\in H^{1,a}_0(B_r^+\cup B'_r)$
\begin{align*}
\frac{(a+1)^2}4\int_{B_r^+}x_n^au^2\le r^2\int_{B_r^+}x_n^a|\D u|^2.
\end{align*}   
\end{lemma}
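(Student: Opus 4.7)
The plan is to reduce the weighted Poincaré inequality to a one-dimensional Hardy-type estimate along the vertical fibers $x_n\mapsto u(x',x_n)$. By density of $C^\infty_0(B_r^+\cup B'_r)$ (resp.\ $C^\infty_0(B_r^+)$ when $a\ge 1$) in $H^{1,a}_0(B_r^+\cup B'_r)$, it suffices to prove the inequality for a smooth compactly supported representative $u$. For fixed $x'\in B'_r$, set $L(x'):=\sqrt{r^2-|x'|^2}\le r$ and observe that the trace $v(t):=u(x',t)$ lies in $C^1([0,L(x')])$ and vanishes near $t=L(x')$, since $\supp u$ stays a positive distance away from the curved boundary $\partial^+B_r^+$.

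The core ingredient will be the following one-dimensional weighted Hardy inequality: for every $a>-1$, $L>0$, and $v\in C^1([0,L])$ with $v(L)=0$,
\begin{equation*}
\frac{(a+1)^2}{4}\int_0^L t^a v(t)^2\,dt\le L^2\int_0^L t^a v'(t)^2\,dt.
\end{equation*}
To derive it, I would exploit the identity $t^a=\tfrac{1}{a+1}\tfrac{d}{dt}(t^{a+1})$ and integrate by parts:
\begin{equation*}
\int_0^L t^a v^2\,dt=\frac{1}{a+1}\left[t^{a+1}v^2\right]_0^L-\frac{2}{a+1}\int_0^L t^{a+1}vv'\,dt.
\end{equation*}
The boundary term at $L$ vanishes since $v(L)=0$, while the one at $t=0$ vanishes because $a+1>0$ and $v$ is bounded on $[0,L]$; this is precisely where the assumption $a>-1$ enters. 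Applying Cauchy--Schwarz to the remaining integral and bounding $t^{a+2}\le L^2 t^a$ on $(0,L)$ then yields
\begin{equation*}
\int_0^L t^a v^2\,dt\le \frac{2L}{a+1}\sqrt{\int_0^L t^a v^2\,dt}\,\sqrt{\int_0^L t^a (v')^2\,dt},
\end{equation*}
from which the claim follows by dividing and squaring.

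To conclude, I would apply the one-dimensional inequality fiberwise with $L=L(x')\le r$, use $|\partial_n u|^2\le|\D u|^2$, and integrate in $x'\in B'_r$ via Fubini. The only delicate point is justifying the vanishing of the $t=0$ boundary term in the full range $a>-1$, including $-1<a\le 0$ where smooth elements of $H^{1,a}_0(B_r^+\cup B'_r)$ need not vanish on $B'_r$; however, boundedness of $u$ on compact sets together with $a+1>0$ makes this routine. Since the fiber argument only touches the vertical direction, the same proof works uniformly across the $A_2$-Muckenhoupt and superdegenerate regimes, avoiding the Muckenhoupt machinery invoked in \cite{FabKenSer82}.
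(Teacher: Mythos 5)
Your proof is correct and is essentially the paper's argument in fiberwise form: both rest on the identity $x_n^a=\tfrac1{a+1}\partial_n(x_n^{a+1})$, an integration by parts in the $x_n$-direction (the paper performs it once over all of $B_r^+$, using that $x_n^{1+a}u^2$ vanishes on $\partial(B_r^+)$, whereas you do it on each vertical segment and then apply Fubini), followed by the same Cauchy--Schwarz step and the bound $x_n^{a+2}\le r^2x_n^a$. The reduction to a one-dimensional Hardy inequality is a cosmetic repackaging rather than a different route, and your treatment of the endpoint $t=0$ via $a+1>0$ matches the paper's use of the vanishing of $x_n^{1+a}u^2$ on the flat part of the boundary.
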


\begin{proof}
Since $C^\infty_0(B_r^+\cup B_r')$ is dense in $H^{1,a}_0(B_r^+\cup B_r')$, we may assume without loss of generality $u\in C^\infty_0(B_r^+\cup B'_r)$. We then have by integration by parts 
\begin{align*}
    a\int_{B_r^+}x_n^au^2&=\int_{B_r^+}\partial_n(x_n^a)x_nu^2=\int_{\partial(B_r^+)}x_n^{1+a}u^2\mean{\vec e_n,\nu} -\int_{B_r^+}x_n^a\partial_n(x_nu^2)\\
    &=-\int_{B_r^+}x_n^au^2-2\int_{B_r^+}x_n^{a+1}u\partial_nu,
\end{align*}
where we have used in the last step that $x_n^{1+a}u^2=0$ on $\partial(B_r^+)$.
Then, it follows
\begin{align*}
    \frac{a+1}2\int_{B_r^+}x_n^au^2&=-\int_{B_r^+}x_n^{a+1}u\partial_nu\le \left(\int_{B_r^+}x_n^au^2\right)^{1/2}\left(\int_{B_r^+}x_n^{a+2}|\partial_nu|^2\right)^{1/2}.
\end{align*}
Therefore, we conlude 
\begin{equation*}
\frac{(a+1)^2}4\int_{B_r^+}x_n^au^2\le \int_{B_r^+}x_n^{a+2}|\partial_n u|^2\le r^2\int_{B_r^+}x_n^{a}|\D u|^2 .\qedhere  
\end{equation*}  
\end{proof}

Suppose $A,\bff\in C^{0,\sigma}(B_1^+)$ with $\sigma$ a $j$-Dini function with $j\in \mathbb N\setminus\{0,1\}$.
If we set for $0<\delta<1$ 
$$
A^\delta(x):=A(\delta x),\quad \bff^\delta(x):=\bff(\delta x),\quad w^\delta(x):=\frac{w(\delta x)}\delta,
$$
then $w_\delta$ solves \begin{align*}
-\div(x_n^aA^\delta\D w^\delta)=\div(x_n^a\bff^\delta)\quad\text{in }B_1^+.
\end{align*}
Thus, by considering those scalings, we may assume that $$
|A(x)-A(y)|\le L_1\sigma(\delta_0|x-y|),\quad L_1:=\|A\|_{C^{0,\sigma}(B_1^+)},
$$
for some small constant $\delta_0$ to be chosen later. Finally, we assume without loss of generality $\sigma(1)=1$. Throughout this section we say that a constant $C$ is \emph{universal} if it depends only on $n$, $\Lambda$, $\al$, $a$, $L_1$ and $\sigma$.

In what follows, we are going to prove $C^1$ regularity for solutions to \eqref{evenLa3}. In order to do so, we perform a standard freezing of the variable coefficients at a given point $x_0\in B'_1$, without loss of generality $x_0=0$. Then, if the coefficients are symmetric, it is not restrictive to assume that $A(0)=\mathbb I_n$, by a standard linear change of variables. Nevertheless, we do not make the latter assumption, since the regularity estimates we obtain can be proved also when coefficients are not symmetric. Moreover, we would like to keep track of the dependence on $A(0)$. In fact, the validity of the Neumann boundary condition \eqref{Neumann} at $\Sigma$ will come from repeating the freezing with consequent polynomial approximation at any given boundary point $x_0\in B'_1$.

\begin{proposition}
    \label{prop:freeze-coeffi-poly-approx}
    If $h\in H^{1,a}(B_r^+)$ solves $$
    \div(x_n^aA(0)\D h)=0\quad\text{in }B_r^+,
    $$
    then there exists an affine function $l$ such that \begin{align*}
        \int_{B_\rho^+}x_n^a|h-l|^2&\le C\left(\rho/r\right)^{n+4+a}\int_{B_r^+}x_n^ah^2,\quad \rho<r/2,\\
        |l(0)|^2+r^2|\D l|^2&\le \frac{C}{r^{n+a}}\int_{B_r^+}x_n^ah^2,
         \end{align*}
         and
          \begin{align}\label{eqlin}
        \begin{cases}
         \div(x_n^aA(0)\D l)=0 &\text{in }B_r^+,\\
        \mean{A(0)\D l,\vec e_n }=0 &\text{on }B_r'.
        \end{cases}
    \end{align}
\end{proposition}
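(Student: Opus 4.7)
The candidate approximant is the first-order Taylor polynomial of $h$ at the origin,
$$l(x):=h(0)+\mean{\D h(0),x}.$$
The three conclusions will all be deduced from a single analytic input: the $C^2$ estimate up to the characteristic boundary $B_r'$ for solutions of the homogeneous constant-coefficient weighted equation. Such a bound, valid in the full range $a>-1$ by the regularity theory in \cite{SirTerVit21a,SirTerVit21b} (or \cite{DonPha20} in the non-symmetric case), reads, after the natural scaling $h\mapsto h(r\,\cdot)$ that preserves $\div(x_n^aA(0)\D\,\cdot)=0$,
$$\|h\|_{L^\infty(B_{r/2}^+)}^2+r^2\|\D h\|_{L^\infty(B_{r/2}^+)}^2+r^4\|D^2h\|_{L^\infty(B_{r/2}^+)}^2\le\frac{C}{r^{n+a}}\int_{B_r^+}x_n^ah^2\,dx,$$
together with the pointwise Neumann identity $\mean{A(0)\D h,\vec e_n}=0$ on $B_r'$.

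Granting this, the second inequality is immediate from $l(0)=h(0)$ and $\D l\equiv\D h(0)$, which give
$$|l(0)|^2+r^2|\D l|^2\le\|h\|_{L^\infty(B_{r/2}^+)}^2+r^2\|\D h\|_{L^\infty(B_{r/2}^+)}^2\le\frac{C}{r^{n+a}}\int_{B_r^+}x_n^ah^2.$$
The equation and boundary condition \eqref{eqlin} are equally direct: since $\D l$ is a constant vector, one computes
$$\div(x_n^aA(0)\D l)=ax_n^{a-1}\mean{A(0)\D h(0),\vec e_n},$$
and evaluating the Neumann identity at the origin yields $\mean{A(0)\D h(0),\vec e_n}=0$, so both $\div(x_n^aA(0)\D l)=0$ in $B_r^+$ and $\mean{A(0)\D l,\vec e_n}=0$ on $B_r'$ hold.

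For the oscillation estimate, I would use the standard second-order Taylor remainder on the convex set $B_{r/2}^+\ni 0$:
$$|h(x)-l(x)|\le\tfrac12\|D^2h\|_{L^\infty(B_{r/2}^+)}|x|^2,\qquad x\in B_{r/2}^+.$$
Integrating against the weight over $B_\rho^+$, $\rho<r/2$, and using $\int_{B_\rho^+}x_n^a|x|^4\,dx\le C\rho^{n+4+a}$, one obtains
$$\int_{B_\rho^+}x_n^a|h-l|^2\,dx\le C\rho^{n+4+a}\|D^2h\|_{L^\infty(B_{r/2}^+)}^2\le C\left(\frac{\rho}{r}\right)^{n+4+a}\int_{B_r^+}x_n^ah^2\,dx,$$
where the second inequality uses the $C^2$ estimate stated above.

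The main obstacle, and essentially the only non-trivial point, is the boundary $C^2$ estimate and pointwise Neumann condition for the frozen constant-coefficient degenerate/singular equation. In the symmetric setting this follows from the weighted Schauder machinery of \cite{SirTerVit21a,SirTerVit21b}, and in the non-symmetric case one must invoke \cite{DonPha20}, as the authors already flag in the discussion after the statement of Theorem \ref{teok}. Once the $C^2$ bound with the Neumann identity is at hand, the rest of the proposition is Taylor expansion together with the elementary integral computation of the weight against $|x|^4$.
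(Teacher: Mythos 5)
Your proposal is correct and follows essentially the same route as the paper's own proof: the same choice $l(x)=h(0)+\mean{\D h(0),x}$, the same reliance on the up-to-the-boundary $C^2$ estimate and pointwise Neumann identity for the frozen constant-coefficient equation (with the same references, \cite{SirTerVit21a} and \cite{DonPha20}), and the same Taylor-remainder-plus-weighted-integration argument for the oscillation bound. No substantive differences to report.
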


\begin{proof}
First, let us remark that by the regularity theory in \cite{SirTerVit21a,TerTorVit22}, the solution $h$ belongs to $C^\infty(B_r^+\cup B'_r)$ and satisfies the boundary condition
\begin{align}\label{eq:frozen-BC}
    \mean{A(0)\D h,\vec e_n }=0\quad\text{on }B_r'.
\end{align}
Let us remark that here the $C^2$ estimate in \cite[Proposition 4.4]{DonPha20}, which does not require symmetry of coefficients, is enough to run the linear approximation described below.
We define a linear polynomial $l(x):=h(0)+\mean{\D h(0), x}$. Then \eqref{eq:frozen-BC} readily gives the boundary condition in \eqref{eqlin}. \eqref{eq:frozen-BC} also implies the equation
\begin{align*}
    \div(x_n^aA(0)\D l)&=\div(x_n^aA(0)\D h(0))=\partial_{n}(x_n^a\mean{A(0)\D h(0),\vec e_n})=0.
\end{align*}
Next, we obtain by Taylor's theorem with remainder and rescaling the derivative estimates for $h$ (see e.g. \cite{TerTorVit22}*{Lemma~2.7})
$$
|h(x)-l(x)|^2\le C\|D^2h\|_{L^\infty(B^+_{r/2})}^2|x|^4\le \frac1{r^{n+4+a}}\left(\int_{B_r^+}x_n^ah^2\right)|x|^4,\quad x\in B^+_\rho.
$$
This readily yields the first estimate of the proposition
$$
\int_{B_\rho^+}x_n^a|h-l|^2\le \frac{C}{r^{n+4+a}}\left(\int_{B_r^+}x_n^ah^2\right)\int_{B_\rho^+}x_n^a|x|^4\le C(\rho/r)^{n+4+a}\int_{B_r^+}x_n^ah^2.
$$
For the second one, we observe that $l(0)=h(0)$ and $\D l\equiv\D h(0)$ and use the interior derivative estimates again to obtain
\begin{equation*}
|l(0)|^2+r^2|\D l|^2\le \|h\|_{L^\infty(B^+_{r/2})}^2+r^2\|\D h\|_{L^\infty(B^+_{r/2})}^2\le \frac{C}{r^{n+a}}\int_{B_r^+}x_n^ah^2.\qedhere
\end{equation*}
\end{proof}

\begin{proposition}
\label{prop:Dini-sol-freeze-coeffi-est}
Suppose $w\in H^{1,a}(B_r^+)$ is a solution of \eqref{evenLa3} in $B_r^+$. Let $h$ be a solution of \begin{align*}
\div(x_n^aA(0)\D h)=0\quad\text{in }B_{r/2}^+,
\end{align*}
with $h-w\in H^{1,a}_0(B_{r/2}^+\cup B'_{r/2})$. Then 
\begin{align*}
\int_{B_{r/2}^+}x_n^ah^2&\le C\int_{B_r^+}x_n^a(r^2|\bff|^2+w^2),\\
\int_{B_{r/2}^+}x_n^a|h-w|^2&\le C\int_{B_r^+}x_n^a(r^2|\bff|^2+L_1^2\sigma(\delta_0 r)^2w^2).
\end{align*}
\end{proposition}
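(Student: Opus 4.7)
The plan is to prove the second estimate first and derive the first from it by a triangle inequality. Since $h-w\in H^{1,a}_0(B_{r/2}^+\cup B'_{r/2})$, it is an admissible test function in both weak formulations: that for $h$ directly, and that for $w$ after extension by zero to $B_r^+$. Subtracting the two identities and decomposing
\[
A(0)\nabla h - A\nabla w = A(0)\nabla(h-w) + (A(0)-A)\nabla w,
\]
I would apply ellipticity on the left and, on the right, the uniform smallness bound $|A(x)-A(0)|\le L_1\sigma(\delta_0 r)$ that holds on $B_{r/2}^+$ by monotonicity of $\sigma$. Cauchy-Schwarz and Young's inequality then absorb half of the gradient norm into the left-hand side to give
\[
\int_{B_{r/2}^+} x_n^a|\nabla(h-w)|^2 \le C L_1^2\sigma(\delta_0 r)^2 \int_{B_{r/2}^+} x_n^a|\nabla w|^2 + C\int_{B_{r/2}^+} x_n^a|\bff|^2.
\]

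To remove $\nabla w$ from the right-hand side, I would establish a weighted Caccioppoli inequality on concentric half-balls by testing the equation for $w$ against $\eta^2 w$, with $\eta$ a smooth cutoff satisfying $\eta\equiv 1$ on $B_{r/2}$, $\supp\eta\subset B_r$ and $|\nabla\eta|\le C/r$. Standard manipulation yields $\int_{B_{r/2}^+} x_n^a|\nabla w|^2\le Cr^{-2}\int_{B_r^+} x_n^a w^2 + C\int_{B_r^+}x_n^a|\bff|^2$. Next, applying Lemma~\ref{lem:Poincare} to $h-w$ promotes the gradient estimate to an $L^{2,a}$ estimate, with the $r^{-2}$ from Caccioppoli cancelling the $r^2$ from the Poincar\'e constant. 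The resulting bound on $\int_{B_{r/2}^+} x_n^a|h-w|^2$ is precisely the second inequality of the statement. The first inequality then follows from $|h|^2\le 2|h-w|^2+2|w|^2$, the second estimate, and the fact that $L_1\sigma(\delta_0 r)$ may be taken bounded (indeed bounded by any prescribed constant upon choosing $\delta_0$ small and using $\sigma(1)=1$).

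The main technical point is justifying the test function manipulations uniformly across the full range $a>-1$. In the $A_2$-Muckenhoupt range $a\in(-1,1)$ this is standard. In the superdegenerate range $a\ge 1$, $h-w$ already lies in the correct space $H^{1,a}_0(B_{r/2}^+)$ by hypothesis, while $\eta^2 w$ must be approximated by smooth functions compactly supported in $B_r^+$ and away from $\Sigma$. The local integrability of $x_n^a$ for $a>-1$ makes this approximation go through, and the formal Neumann condition on $\Sigma$ is encoded directly in the weak formulations for $w$ and $h$, so no stray boundary contribution on $\Sigma$ arises in the energy computation.
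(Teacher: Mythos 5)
Your proof is correct and follows essentially the same route as the paper: a weighted Caccioppoli inequality for $w$, an energy comparison between $h$ and $w$ obtained by testing with $h-w$, and the weighted Poincar\'e inequality of Lemma~\ref{lem:Poincare} to upgrade the gradient bound to the $L^2(x_n^a dx)$ bound, with the first estimate following by the triangle inequality. The only (harmless) difference is that you place the coefficient perturbation on $\nabla w$ via the decomposition $A(0)\nabla h-A\nabla w=A(0)\nabla(h-w)+(A(0)-A)\nabla w$, whereas the paper writes the error as $\bg=(A(0)-A)\nabla h$ and therefore needs the additional comparison $\int x_n^a|\nabla h|^2\le C\int x_n^a|\nabla w|^2$ (the first estimate of Lemma~\ref{lem:w-h-est}); your version bypasses that step.
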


To prove Proposition~\ref{prop:Dini-sol-freeze-coeffi-est}, we need the following auxiliary lemmas.

\begin{lemma}[Caccioppoli Inequality]\label{lem:Caccioppoli}
If $w\in H^{1,a}(B_r^+)$ is a solution of \eqref{evenLa3}, then
$$
\int_{B_{r/2}^+}x_n^a|\D w|^2\le C\int_{B_r^+}x_n^a\left(|\bff|^2+\frac{w^2}{r^2}\right).
$$    
\end{lemma}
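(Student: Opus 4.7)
The plan is to run the standard Caccioppoli argument with the weighted weak formulation, using a radial cutoff function. Let me fix a cutoff $\eta\in C^\infty_0(B_r)$ with $0\le\eta\le 1$, $\eta\equiv 1$ on $B_{r/2}$, and $|\nabla\eta|\le C/r$, and test the weak formulation
$$
-\int_{B_r^+}x_n^a\mean{A\nabla w,\nabla\phi}\,dx=\int_{B_r^+}x_n^a\mean{\bff,\nabla\phi}\,dx
$$
against $\phi=\eta^2 w$. The first step is to verify that $\phi$ is an admissible test function: in the Muckenhoupt range $a\in(-1,1)$ this is standard since $C^\infty_0(B_r)$ is dense in $H^{1,a}_0(B_r^+\cup B'_r)$, while in the superdegenerate range $a\ge 1$, any element of $H^{1,a}(B_r^+)$ already has vanishing trace on $\Sigma$ (the capacity of $\Sigma$ is zero, see \cite{SirTerVit21a}), so $\phi$ is approximable by $C^\infty_0(B_r^+)$ functions.

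Expanding $\nabla(\eta^2 w)=\eta^2\nabla w+2\eta w\nabla\eta$ and rearranging, one arrives at
\begin{equation*}
\int_{B_r^+}x_n^a\eta^2\mean{A\nabla w,\nabla w}\,dx=-2\int_{B_r^+}x_n^a\eta w\mean{A\nabla w,\nabla\eta}\,dx-\int_{B_r^+}x_n^a\eta^2\mean{\bff,\nabla w}\,dx-2\int_{B_r^+}x_n^a\eta w\mean{\bff,\nabla\eta}\,dx.
\end{equation*}
By the lower ellipticity bound in \eqref{eq:assump-coeffi}, the left-hand side dominates $\Lambda^{-1}\int x_n^a\eta^2|\nabla w|^2\,dx$. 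Then I would apply Cauchy--Schwarz together with Young's inequality with parameter $\e>0$ to each of the three terms on the right-hand side: the terms carrying a factor of $\eta|\nabla w|$ produce contributions of the form $\e\int x_n^a\eta^2|\nabla w|^2$ plus lower order, which are absorbed into the left-hand side by choosing $\e$ small enough depending only on $\Lambda$. The remaining pieces collect into $C\int x_n^a(|\bff|^2+w^2|\nabla\eta|^2)\,dx$. Using $|\nabla\eta|\le C/r$ and $\eta\equiv 1$ on $B_{r/2}$ yields
$$
\int_{B_{r/2}^+}x_n^a|\nabla w|^2\,dx\le C\int_{B_r^+}x_n^a\Bigl(|\bff|^2+\frac{w^2}{r^2}\Bigr)\,dx,
$$
as desired.

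The main (minor) obstacle is the justification of the test function choice in the superdegenerate range $a\ge 1$, since test functions in the weak formulation there must vanish on $\Sigma$; this is handled by the density of $C^\infty_0(B_r^+)$ in $H^{1,a}_0(B_r^+)$ together with the fact that elements of $H^{1,a}$ already have vanishing trace on $\Sigma$ for $a\ge 1$. Once admissibility is settled, the rest is a routine Cauchy--Schwarz/Young absorption argument that works uniformly in the full range $a>-1$ since the weight $x_n^a$ appears as a common factor throughout.
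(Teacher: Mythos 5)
Your proposal is correct and follows essentially the same route as the paper: test the weak formulation against $\eta^2 w$ with a standard cutoff, use the lower ellipticity bound, and absorb the cross terms by Young's inequality to arrive at the stated bound. The only cosmetic difference is that the paper reorganizes the left-hand side via the identity $\mean{A\D w,\D(\eta^2w)}=\mean{A\D(\eta w),\D(\eta w)}-w^2\mean{A\D\eta,\D\eta}$ before absorbing, whereas you expand directly and absorb the cross term with a small parameter $\e$; both are routine, and your extra remark on the admissibility of the test function in the superdegenerate range $a\ge1$ is a welcome precision the paper leaves implicit.
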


\begin{proof}
Let $\vp:\R^n\to\R$ be a smooth function such that $0\le\vp\le1$ and $|\D\vp|\le \frac4r$ in $B_r$, $\vp=1$ in $B_{r/2}$, and $\vp=0$ in $\R^n\setminus B_r$. Since $-\div(x_n^aA\D w)=\div(x_n^a\bff)$ and $\vp^2w$ is a valid test function in $B_r^+$, we have 
\begin{align}
    \label{eq:sol-weak-form}
    \int_{B_r^+}x_n^a\mean{A\D w,\D(\vp^2w)}=-\int_{B_r^+}x_n^a\mean{\bff,\D(\vp^2w)}.
\end{align}
A simple algebra gives $$
\mean{A\D w,\D(\vp^2w)}=\mean{A\D(\vp w),\D(\vp w)}-w^2\mean{A\D\vp,\D\vp}.
$$
This, together with the uniform ellipticity of $A$ and $|\D \vp|\le 4/r$, implies the lower-bound for the left-hand side of \eqref{eq:sol-weak-form} \begin{align*}
\int_{B_r^+}x_n^a\mean{A\D w,\D(\vp^2w)}&=\int_{B_r^+}x_n^a\mean{A\D(\vp w),\D(\vp w)}-\int_{B_r^+}x_n^aw^2\mean{A\D\vp,\D\vp}\\
&\ge\frac1\Lambda\int_{B_r^+}x_n^a|\D(\vp w)|^2-\Lambda\int_{B_r^+}x_n^aw^2|\D\vp|^2\\
&\ge\frac1\Lambda\int_{B_r^+}x_n^a|\D(\vp w)|^2-C\int_{B_r^+}x_n^a\frac{w^2}{r^2}.
\end{align*}
We next estimate the right-hand side of \eqref{eq:sol-weak-form} by using Young's inequality
\begin{align*}
-\int_{B_r^+}x_n^a\mean{\bff,\D(\vp\cdot\vp w)}&=-\int_{B_r^+}x_n^a(\vp \mean{\bff,\D(\vp w)}+\vp w\mean{\bff,\D\vp})\\
&\le \int_{B_r^+}x_n^a\left(\frac{1}{2\Lambda}|\D(\vp w)|^2+C|\bff|^2\vp^2+w^2|\D\vp|^2\right)\\
&\le \frac{1}{2\Lambda}\int_{B_r^+}x_n^a|\D(\vp w)|^2+C\int_{B_r^+}x_n^a\left(|\bff|^2+\frac{w^2}{r^2}\right).
\end{align*}
Combining the above two estimates with \eqref{eq:sol-weak-form} (and using $\vp=1$ in $B_{r/2}^+$) yields
\begin{equation*}
\int_{B_{r/2}^+}x_n^a|\D w|^2\le \int_{B_r^+}x_n^a|\D(\vp w)|^2\le C\int_{B_r^+}x_n^a\left(|\bff|^2+\frac{w^2}{r^2}\right). \qedhere
\end{equation*}
\end{proof}

\begin{lemma}\label{lem:w-h-est}
For $w$ and $h$ as in Proposition~\ref{prop:Dini-sol-freeze-coeffi-est}, we have
\begin{align*}
    \int_{B_{r/2}^+}x_n^a|\D h|^2&\le C\int_{B_{r/2}^+}x_n^a|\D w|^2,\\
    \int_{B_{r/2}^+}x_n^a|\D(w-h)|^2&\le C\int_{B_r^+}x_n^a|\bff-\bg|^2,
\end{align*}
where $\bg:=(A(0)-A)\D h$.
\end{lemma}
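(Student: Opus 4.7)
The proof hinges on the observation that, by assumption, $h-w\in H^{1,a}_0(B_{r/2}^+\cup B'_{r/2})$, so both $h-w$ and its negative $u:=w-h$ are admissible test functions for the equations defining $h$ and $w$, respectively. In the Muckenhoupt range $a\in(-1,1)$ this means zero trace on $\partial^+B_{r/2}^+$ with the implicit (weighted) Neumann condition on $B'_{r/2}$, while in the superdegenerate range $a\ge1$ it means vanishing also at $\Sigma$ in the sense of the completion of $C^\infty_0(B_{r/2}^+)$; in both cases the weak formulations at the beginning of the section accept such test functions, so the energy argument below is legitimate.

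For the first estimate I will test the weak formulation $\int_{B_{r/2}^+}x_n^a\langle A(0)\nabla h,\nabla\phi\rangle=0$ against $\phi=h-w$. This immediately yields
\begin{equation*}
\int_{B_{r/2}^+}x_n^a\langle A(0)\nabla h,\nabla h\rangle=\int_{B_{r/2}^+}x_n^a\langle A(0)\nabla h,\nabla w\rangle.
\end{equation*}
Bounding the left side below by $\Lambda^{-1}\int x_n^a|\nabla h|^2$ through uniform ellipticity of $A(0)$, and the right side above by Cauchy--Schwarz together with $|A(0)|\le\Lambda$, gives
\begin{equation*}
\frac{1}{\Lambda}\int_{B_{r/2}^+}x_n^a|\nabla h|^2\le\Lambda\left(\int_{B_{r/2}^+}x_n^a|\nabla h|^2\right)^{1/2}\left(\int_{B_{r/2}^+}x_n^a|\nabla w|^2\right)^{1/2},
\end{equation*}
hence the desired bound with $C=\Lambda^4$.

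For the second estimate I will first identify the equation that $u=w-h$ satisfies. Subtracting the equations for $w$ and $h$ and using the algebraic identity
\begin{equation*}
x_n^aA(0)\nabla h-x_n^aA\nabla w=-x_n^aA\nabla u+x_n^a(A(0)-A)\nabla h=-x_n^aA\nabla u+x_n^a\bg,
\end{equation*}
one finds that $u$ solves $-\div(x_n^aA\nabla u)=\div(x_n^a(\bff-\bg))$ in $B_{r/2}^+$ with $u\in H^{1,a}_0(B_{r/2}^+\cup B'_{r/2})$. Testing against $u$ itself (admissible by the same reasoning as above) and invoking uniform ellipticity of $A$ plus Cauchy--Schwarz yields
\begin{equation*}
\frac{1}{\Lambda}\int_{B_{r/2}^+}x_n^a|\nabla u|^2\le\left(\int_{B_{r/2}^+}x_n^a|\bff-\bg|^2\right)^{1/2}\left(\int_{B_{r/2}^+}x_n^a|\nabla u|^2\right)^{1/2},
\end{equation*}
whence $\int_{B_{r/2}^+}x_n^a|\nabla u|^2\le C\int_{B_{r/2}^+}x_n^a|\bff-\bg|^2\le C\int_{B_r^+}x_n^a|\bff-\bg|^2$, after extending $\bg$ trivially from $B_{r/2}^+$ into $B_r^+$ if needed (since the integrand is nonnegative, this only makes the right-hand side larger).

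There is really no deep obstacle here; the estimates are energy inequalities whose weighted analogues go through once one has verified that the relevant difference is an admissible test function, which is precisely the content of the hypothesis $h-w\in H^{1,a}_0(B_{r/2}^+\cup B'_{r/2})$. The only point that deserves care is the second one, where the subtraction must be carried out so that the variable coefficient $A$ appears in front of $\nabla u$ (so that ellipticity can be applied with a uniform constant) while the discrepancy $(A(0)-A)\nabla h$ is absorbed into the forcing term $\bff-\bg$ on the right-hand side.
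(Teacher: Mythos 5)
Your proof is correct and follows essentially the same route as the paper: test the frozen-coefficient equation for $h$ with $w-h$ (admissible since $w-h\in H^{1,a}_0(B_{r/2}^+\cup B'_{r/2})$) for the first bound, then derive $-\div(x_n^aA\D(w-h))=\div(x_n^a(\bff-\bg))$ and test with $w-h$ for the second, using ellipticity and Cauchy--Schwarz (the paper uses Young's inequality, which is equivalent here). Your remark on extending $\bg$ by zero to pass from $B_{r/2}^+$ to $B_r^+$ is a harmless clarification of a point the paper leaves implicit.
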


\begin{proof}
As $\div(x_n^aA(0)\D h)=0$ in $B_r^+$ and $w-h\in H^{1,a}_0(B_{r/2}^+\cup B'_{r/2})$, we have 
$$
\int_{B_{r/2}^+}x_n^a\mean{A(0)\D h,\D(w-h)}=0.
$$
By making use of Young's inequality and the uniform ellipticity of $A$, we can easily get the first inequality of the lemma. For the second one, we observe that 
\begin{align*}
    -\div(x_n^aA\D(w-h))&=\div(x_n^a\bff)+\div(x_n^aA\D h)=\div(x_n^a\bff)+\div(x_n^a(A-A(0))\D h)\\
    &=\div(x_n^a(\bff-\bg)).
\end{align*}
This implies
$$
\int_{B^+_{r/2}}x_n^a\mean{A\D(w-h),\D(w-h)}=\int_{B^+_{r/2}}x_n^a\mean{\bg-\bff,\D(w-h)}.
$$
By using the uniform ellipticity of $A$ and applying Young's inequality, we can obtain the second estimate.    
\end{proof}

We are now ready to prove Proposition~\ref{prop:Dini-sol-freeze-coeffi-est}

\begin{proof}[Proof of Proposition~\ref{prop:Dini-sol-freeze-coeffi-est}]
We first observe that 
$$
-\div(x_n^aA\D h)=\div(x_n^a(A(0)-A)\D h)=\div(x_n^a\bg),\quad \text{where }\bg=(A(0)-A)\D h.
$$
By applying Lemma~\ref{lem:w-h-est} and Lemma~\ref{lem:Caccioppoli}, we get
$$
\int_{B_{r/2}^+}x_n^a|\D h|^2\le C\int_{B_{r/2}^+}x_n^a|\D w|^2\le C\int_{B_r^+}x_n^a\left(|\bff|^2+\frac{w^2}{r^2}\right).
$$
This gives the estimate for $\bg$
$$
\int_{B_{r/2}^+}x_n^a|\bg|^2\le CL_1^2(\sigma(\de_0 r))^2\int_{B_{r/2}^+}x_n^a|\D h|^2\le CL_1^2(\sigma(\de_0 r))^2\int_{B_r^+}x_n^a\left(|\bff|^2+\frac{w^2}{r^2}\right).
$$
Since $w-h$ satisfies
$$
-\div(x_n^aA\D(w-h))=\div(x_n^a(\bff-\bg))\quad\text{in }B_{r/2}^+,
$$
we have 
\begin{align*}
\int_{B_{r/2}^+}x_n^a|w-h|^2&\le Cr^2\int_{B_{r/2}^+}x_n^a|\D(w-h)|^2\le Cr^2\int_{B_r^+}x_n^a|\bff-\bg|^2\\
&\le Cr^2\int_{B_r^+}x_n^a(|\bff|^2+|\bg|^2)\le C\int_{B_r^+}x_n^a(r^2|\bff|^2+L_1^2\sigma(\de_0 r)^2w^2).
\end{align*}
Here, we used Lemma~\ref{lem:Poincare} in the first inequality and Lemma~\ref{lem:w-h-est} in the second inequality. The last step follows from the estimate for the weighted $L^2$-norm of $\bg$ obtained above. This is the second estimate in the lemma, and it readily gives the first one
\begin{equation*}
\int_{B_{r/2}^+}x_n^ah^2\le 2\int_{B_{r/2}^+}x_n^a(|w-h|^2+w^2)\le C\int_{B_r^+}x_n^a(r^2|\bff|^2+w^2).\qedhere
\end{equation*}
\end{proof}

\begin{proposition}\label{prop:Dini-poly-approx}
For a solution $w$ of \eqref{evenLa3} in $B_1^+$, we denote $$
\sup_{x\in B_1^+}\frac{|\bff(x)-\bff(0)|}{\sigma(|x|)}=\rchi,\quad \int_{B_1^+}x_n^aw^2=\mu^2,\quad |\bff(0)|=\tau.
$$
If $\de_0$ is sufficiently small universal, then there exists an affine function $P$ such that $$
\|P\|_{C^1(B_1^+)}^2+\frac1{r^{n+2+a}\mathcal{J}_\sigma(r)^2}\int_{B_r^+}x_n^a|w-P|^2\le C(\chi^2+\mu^2+\tau^2),\quad 0<r<r_0,
$$
for some universal constants $r_0>0$ and $C>0$. In particular,
\begin{equation}\label{BCP}
\mean{A(0)\D P+\bff(0),\vec e_n}=0.
\end{equation}
Moreover, there exists $\bff_\infty$ with $\bff_\infty(0)=0$ such that $$
-\div(x_n^aA\D(w-P))=\div(x_n^a\bff_\infty),\quad \sup_{x\in B_1^+}\frac{|\bff_\infty(x)|}{\sigma(|x|)}\le C(\chi+\mu+\tau).
$$
\end{proposition}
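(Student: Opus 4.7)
The plan is to construct $P$ by a Campanato-type iteration on dyadic scales $r_k=\lambda^k r_0$, for suitable universal $\lambda\in(0,1/4)$ and $r_0>0$, producing the affine approximation as the limit of a sequence of affine polynomials preserving the Neumann compatibility \eqref{BCP}. The first move is to absorb the normal component of $\mathbf{f}(0)$ via an initial affine $P_*(x)=c_* x_n$ with $c_*=-\langle\mathbf{f}(0),\vec e_n\rangle/\langle A(0)\vec e_n,\vec e_n\rangle$, so that $|c_*|\le\Lambda\tau$ and $\langle A(0)\nabla P_*+\mathbf{f}(0),\vec e_n\rangle=0$. Then $\hat w:=w-P_*$ solves $-\div(x_n^aA\nabla\hat w)=\div(x_n^a\hat\mathbf{f})$ with $\hat\mathbf{f}=\mathbf{f}+A\nabla P_*$ satisfying $\langle\hat\mathbf{f}(0),\vec e_n\rangle=0$; this zero-normal-component property at the origin is the invariant that I would propagate along the iteration.

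I would inductively construct affine functions $\ell_k$, with $\ell_0=0$ and $\langle A(0)\nabla\ell_k,\vec e_n\rangle=0$, such that $P_k:=P_*+\ell_k$ satisfies
\[
\Phi_k := \frac{1}{r_k^{n+2+a}}\int_{B_{r_k}^+}x_n^a|w-P_k|^2 \le C(\chi^2+\mu^2+\tau^2)\,\mathcal{J}_\sigma(r_k)^2.
\]
Given $P_k$, set $v_k:=w-P_k$, which solves $-\div(x_n^aA\nabla v_k)=\div(x_n^a\mathbf{F}_k)$ with $\mathbf{F}_k=\mathbf{f}+A\nabla P_k$ and $\langle\mathbf{F}_k(0),\vec e_n\rangle=0$ by the inductive hypothesis. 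The critical observation is that since $\mathbf{F}_k(0)$ is a constant vector with vanishing normal component, $\div(x_n^a\mathbf{F}_k(0))=ax_n^{a-1}\langle\mathbf{F}_k(0),\vec e_n\rangle=0$, so one may replace $\mathbf{F}_k$ by $\tilde\mathbf{F}_k:=\mathbf{F}_k-\mathbf{F}_k(0)$ without changing the equation; by Dini continuity of $\mathbf{f}$ and $A$, $|\tilde\mathbf{F}_k(x)|\le C(\chi+|\nabla P_k|)\sigma(|x|)$.

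Next, at scale $r_k$ I would apply Proposition \ref{prop:Dini-sol-freeze-coeffi-est} to compare $v_k$ with the frozen-coefficient solution $h_k$ of $\div(x_n^aA(0)\nabla h_k)=0$ in $B_{r_k/2}^+$ with $h_k=v_k$ on $\partial^+B_{r_k/2}^+$, and then Proposition \ref{prop:freeze-coeffi-poly-approx} to $h_k$ to produce an affine $l_k$ approximating $h_k$ on $B_{\lambda r_k}^+$ — and crucially $l_k$ automatically satisfies the homogeneous Neumann condition $\langle A(0)\nabla l_k,\vec e_n\rangle=0$. Setting $\ell_{k+1}=\ell_k+l_k$ preserves the compatibility (which is additive), and combining the two approximation estimates via triangle inequality, for $\lambda$ small universal, gives $\Phi_{k+1}\le\tfrac12\Phi_k+C(\chi^2+\mu^2+\tau^2)\sigma(\delta_0 r_k)^2$, from which the $\mathcal{J}_\sigma(r_k)^2$-decay follows by iteration (the $\alpha$-nonincreasing property of $\sigma$ ensures the series behaves correctly). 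The incremental bounds $|\nabla\ell_{k+1}-\nabla\ell_k|+r_k^{-1}|\ell_{k+1}(0)-\ell_k(0)|\le C(\chi+\mu+\tau)\sigma(\lambda r_k)$ from the second estimate in Proposition \ref{prop:freeze-coeffi-poly-approx} are summable by the 1-Dini property of $\sigma$, yielding convergence $\ell_k\to\ell_\infty$ and the claimed $C^1$-bound for $P=P_*+\ell_\infty$, with \eqref{BCP} inherited in the limit. For the source at infinity I would simply set
\[
\mathbf{f}_\infty(x):=\mathbf{f}(x)-\mathbf{f}(0)+(A(x)-A(0))\nabla P,
\]
so that $\mathbf{f}_\infty(0)=0$ identically, and $\div(x_n^a\mathbf{f}_\infty)=\div(x_n^a(\mathbf{f}+A\nabla P))$ because the subtracted constant $\mathbf{f}(0)+A(0)\nabla P$ has zero normal component by \eqref{BCP}; the Dini bound follows at once from the assumptions.

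The main obstacle is precisely to organize the iteration so that the compatibility $\langle A(0)\nabla P_k+\mathbf{f}(0),\vec e_n\rangle=0$ persists at every step; this forces the specific choice of $P_*$ and relies essentially on the structural feature that the affine approximations produced by Proposition \ref{prop:freeze-coeffi-poly-approx} obey the homogeneous Neumann condition under $A(0)$ — without this, the error source would develop an unrecoverable constant normal component and the iteration would lose a derivative.
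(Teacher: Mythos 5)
Your overall architecture is the same as the paper's: a Campanato iteration over scales $r_k=\lambda^k r_0$, comparing $w-P_k$ with the frozen-coefficient solution (Proposition~\ref{prop:Dini-sol-freeze-coeffi-est}), linearizing it via Proposition~\ref{prop:freeze-coeffi-poly-approx}, and propagating the compatibility $\mean{A(0)\D l_k,\vec e_n}=0$ additively. Your handling of $\bff(0)\neq0$ is a legitimate mild variant: you remove only the normal component through $P_*=c_*x_n$ and then discard the constant $\mathbf F_k(0)$ at every step (correct, since a constant field with vanishing normal component contributes nothing to the weak formulation), whereas the paper removes the whole vector once and for all by passing to $\hat w=w+\mean{A^{-1}(0)\bff(0),x}$ in its Case~2.

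There is, however, a genuine inconsistency in the quantitative bookkeeping. Your inductive hypothesis is $\Phi_k\le C(\rchi^2+\mu^2+\tau^2)\cJ_\sigma(r_k)^2$, yet you claim increments of size $\sigma(\lambda r_k)$. Propositions~\ref{prop:freeze-coeffi-poly-approx} and \ref{prop:Dini-sol-freeze-coeffi-est} only give $|\D l_k|^2\le C\big(\sigma(r_k)^2(\rchi+|\D P_k|)^2+\Phi_k\big)$, so under your hypothesis the increments are merely $O(\cJ_\sigma(r_k))$; their tails then sum to $\cH_\sigma(r_k)$ rather than $\cJ_\sigma(r_k)$, and since $w-P=(w-P_k)-\sum_{i\ge k}l_i$ on $B_{r_k}^+$ you would only reach the weaker decay $r^{n+2+a}\cH_\sigma(r)^2$ — one integration worse than the statement, and not enough for the $C^{1,\cH_{\sigma+\tilde\sigma}}$ conclusion of Theorem~\ref{thm:Dini-ratio-reg}. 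The correct normalization, as in the paper, is the stronger $\Phi_k\lesssim\sigma(r_k)^2$ (uniform boundedness of $\mu_k^2=\lambda^{-k(n+2+a)}\sigma(\lambda^k)^{-2}\int_{B_{\lambda^k}^+}x_n^aw_k^2$); the $\cJ_\sigma(r)$ in the conclusion is not the decay rate of the remainder $w_k$ but comes from summing the tail $\sum_{i\ge k}\sigma(\lambda^i)\lesssim\cJ_\sigma(\lambda^k)$ of the affine corrections via Remark~\ref{remarkJalpha}(8). Two further points: the contraction $\Phi_{k+1}\le\tfrac12\Phi_k+\cdots$ does not close either induction for small universal $\lambda$, because rescaling the normalizing modulus from $r_k$ to $r_{k+1}$ costs a factor $\lambda^{-2\alpha}$; one must use the full gain $(\rho/r)^{n+4+a}$ of Proposition~\ref{prop:freeze-coeffi-poly-approx}, i.e.\ a contraction $C\lambda^2$ with $C\lambda^{2(1-\alpha)}\le\tfrac12$. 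And since the source bound at step $k$ involves $|\D P_k|$, the recursion is coupled to the partial sums of the increments; this coupling has to be closed simultaneously, as the paper does with the joint $(\rchi_k,\mu_k)$ system and the convergent product $\prod_i(1+C\sigma(\lambda^i))$.
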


\begin{proof} We split our proof into two cases $$
\text{either $\tau=0$ or $\tau\neq0$.  }
$$
\emph{Case 1.} We first consider the case $|\bff(0)|=\tau=0$, and further divide its proof into several steps.

\medskip\noindent\emph{Step 1.} We inductively define a sequence of functions $\{w_k\}_{k=0}^\infty$
\begin{align*}
\begin{cases}
w_0:=w,\\
w_{k+1}=w_k-l_k,\quad k\ge0,
\end{cases}
\end{align*}
where $l_k's$ are linear polynomials defined as follows: for a small universal constant $\la>0$ to be chosen later, let $h_k$, $k\ge0$, be a solution of 
\begin{align*}
    \begin{cases}
        \div(x_n^aA(0)\D h_k)=0&\text{in }B^+_{\la^k/2},\\
        h_k=w_k&\text{on }\partial^+ B_{\la^k/2},
    \end{cases}
\end{align*}
and let $l_k$ be the linearization of $h_k$ as in Proposition~\ref{prop:freeze-coeffi-poly-approx}; that is,
\begin{equation*}
l_k(x)=h_k(0)+\mean{\nabla h_k(0),x}.
\end{equation*}
We set
$$
\begin{cases}
   \bff_0:=\bff,\\
    \bff_{k+1}:=\bff_k+(A-A(0))\D l_k,\quad k\ge0.
\end{cases}
$$
Note that $\bff_k(0)=0$. We claim that \begin{align}
\label{eq:Dini-induc}
-\div(x_n^aA\D w_k)=\div(x_n^a\bff_k).
\end{align}
To prove it, we use induction on $k$. It is clearly true when $k=0$. We now assume that \eqref{eq:Dini-induc} holds true for $k$, and prove it for $k+1$. Using Proposition~\ref{prop:freeze-coeffi-poly-approx}, we get
\begin{align*}
-\div(x_n^aA\D w_{k+1})&=-\div(x_n^aA\D w_k)+\div(x_n^aA\D l_k)\\
&=\div(x_n^a\bff_k)+\div(x_n^a(A-A(0))\D l_k)=\div(x_n^a\bff_{k+1}).
\end{align*}

\medskip\noindent\emph{Step 2.} We define sequences $\rchi_k\ge0$ and $\mu_k\ge0$ as
$$
\rchi_k:=\sup_{x\in B_{\la^k}^+}\frac{|\bff_k(x)|}{\sigma(|x|)},\quad \mu_k^2:=\frac1{\la^{k(n+2+a)}\sigma(\la^k)^2}\int_{B_{\la^k}^+}x_n^aw_k^2.
$$
We claim that 
\begin{align}
    \label{eq:linear-poly-est} &|l_k(0)|^2+\la^{2k}|\D l_k|^2\le C\la^{2k}\sigma(\la^k)^2(\rchi_k^2+\mu_k^2),\\
    \label{eq:sol-linear-poly-diff-est} &\frac1{\la^{k(n+2+a)}\sigma(\la^k)^2}\int_{B^+_{\la^{k+1}}}x_n^a|w_k-l_k|^2\le C(\rchi_k^2+\la^{n+4+a}\mu_k^2).
\end{align}
Indeed, by applying Proposition~\ref{prop:freeze-coeffi-poly-approx} and Proposition~\ref{prop:Dini-sol-freeze-coeffi-est}, we can obtain \eqref{eq:linear-poly-est}; that is,
\begin{align}
    \label{eq:linear-poly-est-comput}
    \begin{split}
    |l_k(0)|^2+\la^{2k}|\D l_k|^2&\le \frac{C}{\la^{k(n+a)}}\int_{B^+_{\la^k/2}}x_n^ah_k^2\le\frac{C}{\la^{k(n+a)}}\int_{B^+_{\la^k}}x_n^a(\la^{2k}|\bff_k|^2+w_k^2)\\
    &\le C\la^{2k}\sigma(\la^k)^2(\rchi_k^2+\mu_k^2).
\end{split}\end{align}
For \eqref{eq:sol-linear-poly-diff-est}, we observe that
$$
\int_{B^+_{\la^{k+1}}}x_n^a|w_k-l_k|^2\le 2\int_{B^+_{\la^{k+1}}}x_n^a|w_k-h_k|^2+2\int_{B_{\la^{k+1}}^+}x_n^a|h_k-l_k|^2=:I+II.
$$
The estimate for $I$ follows by applying Proposition~\ref{prop:Dini-sol-freeze-coeffi-est}
\begin{align*}
    \int_{B^+_{\la^{k+1}}}x_n^a|w_k-h_k|^2&\le \int_{B^+_{\la^k/2}}x_n^a|w_k-h_k|^2\le C\int_{B^+_{\la^k}}x_n^a\left(\la^{2k}|\bff_k|^2+L_1^2\sigma(\delta_0\la^k)^2w_k^2\right)\\
    &\le C\la^{k(n+2+a)}\sigma(\la^k)^2\left(\rchi_k^2+L_1^2\sigma(\delta_0\la^k)^2\mu_k^2\right).
\end{align*}
For $II$, we use Proposition~\ref{prop:freeze-coeffi-poly-approx} to get
\begin{align*}
    \int_{B^+_{\la^{k+1}}}x_n^a|h_k-l_k|^2\le C\la^{n+4+a}\int_{B^+_{\la^k/2}}x_n^ah_k^2\le C\la^{k(n+2+a)+n+4+a}\sigma(\la^k)^2(\rchi_k^2+\mu_k^2),
\end{align*}
where in the second inequality we have used the computation made in \eqref{eq:linear-poly-est-comput}. If $\delta_0=\delta_0(\la,\sigma, L_1,a)$ is small so that $L_1^2\sigma(\delta_0\la^k)^2\le L_1^2\sigma(\delta_0)^2\le \la^{n+4+a}$, then we conclude \eqref{eq:sol-linear-poly-diff-est}
$$
\int_{B^+_{\la^{k+1}}}x_n^a|w_k-l_k|^2\le I+II\le C\la^{k(n+2+a)}\sigma(\la^k)^2(\rchi_k^2+\la^{n+4+a}\mu_k^2).
$$

\medskip\noindent\emph{Step 3.} We show in this step that $\rchi_k$ and $\mu_k$ are uniformly bounded. To this aim, we use \eqref{eq:linear-poly-est} to get
\begin{align*}
    \rchi_{k+1}&=\sup_{x\in B_{\la^{k+1}}^+}\frac{|\bff_{k+1}(x)|}{\sigma(|x|)}\le \sup_{x\in B_{\la^k}^+}\frac{|\bff_{k}(x)|}{\sigma(|x|)}+\sup_{x\in B_{\la^{k+1}}^+}\frac{|(A(x)-I)\D l_k|}{\sigma(|x|)}\\
    &\le \rchi_k+C_1\sigma(\la^k)(\rchi_k+\mu_k)=(1+C_1\sigma(\la^k))\rchi_k+C_1\sigma(\la^k)\mu_k,
\end{align*}
for some universal constant $C_1$ independent of $\la$.
On the other hand, by exploiting \eqref{eq:sol-linear-poly-diff-est}, we obtain
\begin{align*}
    \mu_{k+1}^2&=\frac1{\la^{(k+1)(n+2+a)}\sigma(\la^{k+1})^2}\int_{B_{\la^{k+1}}^+}x_n^aw_{k+1}^2\\
    &=\frac{\sigma(\la^k)^2}{\la^{n+2+a}\sigma(\la^{k+1})^2}\left(\frac1{\la^{k(n+2+a)}\sigma(\la^k)^2}\int_{B_{\la^{k+1}}^+}x_n^a|w_k-l_k|^2\right)\\
    &\le \frac{\sigma(\la^k)^2}{\la^{n+2+a}\sigma(\la^{k+1})^2}(C\rchi_k^2+C\la^{n+4+a}\mu_k^2),
\end{align*}
and thus 
$$
\mu_{k+1}\le C_2\frac{\sigma(\la^k)}{\la^{\frac {n+a}2+1}\sigma(\la^{k+1})}\rchi_k+C_2\frac{\la\sigma(\la^k)}{\sigma(\la^{k+1})}\mu_k,
$$
for another universal constant $C_2$ independent of $\la$. Since $\frac{\sigma(t)}{t^\al}$ is nonincreasing for some $0<\al<1$, we have $\frac{\sigma(\la^k)}{\sigma(\la^{k+1})}\le \frac1{\la^\al}$. Fixing $\la<1/4$ small universal satisfying $C_2\la^{1-\al}\le 1/2$, we find
$$
\begin{cases}
    \rchi_{k+1}\le (1+C_1\sigma(\la^k))\rchi_k+C_1\sigma(\la^k)\mu_k,\\
    \mu_{k+1}\le \frac{C_2}{\la^{\frac {n+a}2+1+\al}}\rchi_k+\frac12\mu_k.
\end{cases}
$$
Calling $\tilde\mu_k:=\frac{\la^{\frac{n+a}2+1+\al}}{2C_2}\mu_k$, we have 
$$
\begin{cases}
    \rchi_{k+1}\le (1+C_1\sigma(\la^k))\rchi_k+\frac{2C_1C_2\sigma(\la^k)}{\la^{\frac{n+a}2+1+\al}}\tilde\mu_k,\\
    \tilde\mu_{k+1}\le 1/2(\rchi_k+\tilde\mu_k).
\end{cases}
$$
It follows that for a universal constant $C_3:=\frac{2C_1C_2}{\la^{\frac{n+a}2+1+\al}}$,
$$
\begin{cases}
    \rchi_{k+1}\le (1+C_1\sigma(\la^k))\rchi_k+C_3\sigma(\la^k)\tilde\mu_k,\\
    \tilde\mu_{k+1}\le 1/2(\rchi_k+\tilde\mu_k).
\end{cases}
$$
By using an induction on $k$, one can easily see that $$
\rchi_k\le \tilde C_k(\rchi_0+\tilde\mu_0),\quad \tilde\mu_k\le \tilde C_k(\rchi_0+\tilde\mu_0),
$$
where $\tilde C_k:=\Pi_{i=0}^k(1+(C_1+C_3)\sigma(\la^i))$. Using the fact that $\log(1+x)\le x$ for $x>0$ and $(8)$ in Remark~\ref{remarkJalpha}, we get 
\begin{align*}
    \log \tilde C_k&=\sum_{i=0}^k\log(1+(C_1+C_3)\sigma(\la^i))\le (C_1+C_3)\sum_{i=0}^k\sigma(\la^i)\\
    &\le (C_1+C_3)\sum_{i=0}^\infty\sigma(\la^i)\le (C_1+C_3)\frac{\cJ_\sigma(1)}{1-\la}.    
\end{align*}
Thus, $\tilde C_k$ is uniformly bounded, and hence (using $\tilde \mu_k=C\mu_k$)
$$
\rchi_k\le C(\rchi+\mu),\quad \mu_k\le C(\rchi+\mu).
$$

\medskip\noindent\emph{Step 4.} We write $P_k(x):=\sum_{i=0}^kl_i(x)$. Note that $w_k=w-P_{k-1}$, $k\ge1$. From \eqref{eq:linear-poly-est}, $$
|l_i(0)|+|\D l_i|\le C\sigma(\la^i)(\rchi_i+\mu_i)\le C(\rchi+\mu)\sigma(\la^i),
$$
thus $$
|P_k(0)|+|\D P_k|\le C(\rchi+\mu)\sum_{i=0}^k\sigma(\la^i)<\infty,
$$
and hence $|P_k(0)|+|\D P_k|$ is uniformly bounded. This gives that over a subsequence $P_k\to P$ in $C^1(\overline{B_1^+})$ for some affine function $P(x)$ with $$\|P\|_{C^1(B_1^+)}\le C(\rchi+\mu).$$
Moreover, we have by Proposition~\ref{prop:freeze-coeffi-poly-approx}
$$
\mean{A(0)\D P_k,\vec e_n}=\sum_{i=0}^k\mean{A(0)\D l_i,\vec e_n}=0,
$$
and thus 
\begin{align*}
\mean{A(0)\D P,\vec e_n}=0.
\end{align*}
As $\bff(0)=0$, this gives \eqref{BCP}.

To estimate the weighted $L^2$-distance between $w$ and $P$, let $r\in(0,\la)$ be given. Then we can find $k\ge1$ such that $\la^{k+1}\le r<\la^k$. Note that
\begin{align}\label{eq:sol-linear-poly-diff-est-2}
\frac1{r^{n+2+a}\mathcal{J}_\sigma(r)^2}\int_{B_r^+}x_n^a|w-P|^2\le \frac1{\la^{(k+1)(n+2+a)}\cJ_\sigma(\la^{k+1})^2}\int_{B_{\la^k}^+}x_n^a|w-P|^2.
\end{align}
To bound $\int_{B_{\la^k}^+}x_n^a|w-P|^2$, we observe that 
\begin{align*}
    |w-P|&\le |w_k|+|w-w_k-P|=|w_k|+|P_{k-1}-P|\le |w_k|+\sum_{i=k}^\infty|l_i|,
\end{align*}
and thus
\begin{align*}
    \left(\int_{B_{\la^k}^+}x_n^a|w-P|^2\right)^{1/2}&\le \left(\int_{B_{\la^k}^+}x_n^aw_k^2\right)^{1/2}+\sum_{i=k}^\infty\left(\int_{B_{\la^k}^+}x_n^al_i^2\right)^{1/2}\\
    &\le C\la^{k(\frac{n+a}2+1)}\sigma(\la^k)\mu_k+\sum_{i=k}^\infty\left(\int_{B_{\la^k}^+}x_n^al_i^2\right)^{1/2}.
\end{align*}
We use \eqref{eq:linear-poly-est} to get that for every $i\ge k$
\begin{align*}
    \sup_{B_{\la^k}^+}|l_i|&\le |l_i(0)|+\sup_{B^+_{\la^k}}|l_i-l_i(0)|\le |l_i(0)|+\la^k\sup_{B^+_{\la^k}}|\D l_i|\\
    &\le C\la^k\sigma(\la^i)(\rchi_i+\mu_i)\le C\la^k\sigma(\la^i)(\rchi+\mu),
\end{align*}
which implies 
$$
\left(\int_{B^+_{\la^k}}x_n^al_i^2\right)^{1/2}\le C\la^{\frac{k(n+a)}2}\sup_{B^+_{\la^k}}|l_i|\le C\la^{k(\frac {n+a}2+1)}\sigma(\la^i)(\rchi+\mu).
$$
Thus,
\begin{align*}   
    \left(\int_{B^+_{\la^k}}x_n^a|w-P|^2\right)^{1/2}&\le C\la^{k(\frac{n+a}2+1)}(\rchi+\mu)\left(\sigma(\la^k)+\sum_{i=k}^\infty\sigma(\la^i)\right)\\
    &\le C\la^{k(\frac{n+a}2+1)}(\rchi+\mu)\left(\sigma(\la^k)+\cJ_\sigma(\la^{k})\right)\\
    &\le C\la^{k(\frac{n+a}2+1)}(\rchi+\mu)\cJ_\sigma(\la^{k}),
\end{align*}
where we have used $(8)$ in Remark~2.1 in the second inequality. Combining this estimate with \eqref{eq:sol-linear-poly-diff-est-2} yields
\begin{align*}
    \frac1{r^{n+2+a}\cJ_\sigma(r)^2}\int_{B_r^+}x_n^a|w-P|^2&\le \frac1{\la^{(k+1)(n+2+a)}\cJ_\sigma(\la^{k+1})^2}\int_{B_{\la^k}^+}x_n^a|w-P|^2\\
    &\le \frac{C\la^{k(n+2+a)}\cJ_\sigma(\la^{k})^2}{\la^{(k+1)(n+2+a)}\cJ_{\sigma}(\la^{k+1})^2}(\rchi^2+\mu^2)\\
    &\le C(\rchi^2+\mu^2),
\end{align*}
where in the last inequality we have used that $\frac{\cJ_\sigma(r)}r$ is nonincreasing (so $\frac{\cJ_\sigma(\la^{k})}{\cJ_\sigma(\la^{k+1})}\le 1/\la$) and $\la$ is a universal constant.

Next, we observe that since $|\bff_{k+1}-\bff_k|=|(A(0)-A)\D l_k|\le C(\rchi+\mu)\sigma(\la^k)$ in $B_1^+$ and $\sum_{i=k}^\infty\sigma(\la^i)\le C\cJ_\sigma(\la^{k-1})\to0$ as $k\to\infty$, $\bff_k\to\bff_\infty$ uniformly in $B_1^+$. Note that $\bff_\infty(0)=0$ and that
\begin{align*}
    \sup_{x\in B_1^+}\frac{|\bff_\infty(x)|}{\sigma(|x|)}&\le \sup_{x\in B_1^+}\frac{|\bff(x)|}{\sigma(|x|)}+\sum_{k=0}^\infty\left(\sup_{x\in B_1^+}\frac{|\bff_{k+1}(x)-\bff_k(x)|}{\sigma(|x|)}\right)\\
    &\le \rchi+\sum_{k=0}^\infty\left(\sup_{x\in B_1^+}\frac{|(A(x)-A(0))\D l_k|}{\sigma(|x|)}\right)\\
    &\le \rchi+\sum_{k=0}^\infty C(\rchi+\mu)\sigma(\la^k)\le C(\rchi+\mu).
\end{align*}
We note $-\div(x_n^aA\D(w-P_k))=-\div(x_n^aA\D w_{k+1})=\div(x_n^a\bff_{k+1})$ and pass to the limit to obtain $$
-\div(x_n^aA\D(w-P))=\div(x_n^a\bff_\infty).
$$

\medskip\noindent\emph{Case 2.} We now consider the case $\tau=|\bff(0)|\neq0$. We set $$
\hat w(x):=w(x)+\mean{A^{-1}(0)\bff(0),x},\quad \hat \bff(x):=\bff(x)-A(x)A^{-1}(0)\bff(0).
$$
Then $\hat \bff(0)=0$ and 
\begin{align*}
-\div(x_n^aA\D \hat w)
&=-\div(x_n^aA\D w)-\div(x_n^aA\D\mean{A^{-1}(0)\bff(0),x})\\
&=\div(x_n^a\bff)-\div(x_n^aAA^{-1}(0)\bff(0))\\
&=\div(x_n^a\hat \bff).
\end{align*}
We also observe that 
\begin{align*}
    \hat \rchi&:=\sup_{x\in B_1^+}\frac{|\hat \bff(x)-\hat \bff(0)|}{\sigma(|x|)}\le \sup_{x\in B_1^+}\frac{|\bff(x)-\bff(0)|}{\sigma(|x|)}+\sup_{x\in B_1^+}\frac{|(A(x)-A(0))A^{-1}(0)\bff(0)|}{\sigma(|x|)}\\
    &\le \rchi+C\tau
\end{align*}
and 
\begin{align*}
    \hat \mu^2:=\int_{B_1^+}x_n^a\hat w^2\le 2\int_{B_1^+}x_n^aw^2+2\int_{B_1^+}x_n^a\mean{A^{-1}(0)\bff(0),x}^2\le C(\mu^2+\tau^2). 
\end{align*}
Since $\hat \bff(0)=0$, we can apply the result in the previous case to obtain the following: there exists a linear polynomial $\hat P$ with $\mean{A(0)\D\hat P,\vec e_n}=0$ such that
\begin{align*}
    \|\hat P\|_{C^1(B_1^+)}^2+\frac1{r^{n+2+a}\cJ_\sigma(r)^2}\int_{B_r^+}x_n^a|\hat w-\hat P|^2\le C(\hat \rchi^2+\hat \mu^2)\le C(\rchi^2+\mu^2+\tau^2),
\end{align*}
and a field $\hat\bff_\infty$ with $\hat\bff_\infty(0)=0$ such that 
$$
-\div(x_n^aA\D(\hat w-\hat P))=\div(x_n^a\hat\bff_\infty),\qquad \sup_{x\in B^+_1}\frac{|\hat\bff_\infty(x)|}{\sigma(|x|)}\le C(\hat\rchi+\hat\mu)\le C(\rchi+\mu+\tau).
$$
Then $w$ satisfies the statement of Proposition~\ref{prop:Dini-poly-approx} with $P(x):=\hat P(x)-\mean{A^{-1}(0)\bff(0),x}$ and $\bff_\infty(x):=\hat\bff_\infty(x)$. In particular, the boundary condition \eqref{BCP} follows from the equality $\mean{A(0)\D\hat P,\vec e_n}=0$
$$
\mean{A(0)\D P,\vec e_n}=\mean{A(0)\D\hat P,\vec e_n}-\mean{A(0)\D(\mean{A^{-1}(0)\bff(0),x},\vec e_n}=-\mean{\bff(0),\vec e_n}.
$$
This completes the proof. 
\end{proof}

\begin{theorem}
    \label{thm:Dini-ratio-reg}
    Suppose $w$ solves \eqref{evenLa3} in $B_1^+$ with $\bff\in C^{0,\sigma}(B_1^+)$ and small $\delta_0$ universal. Then there exists $\rho_0>0$ universal such that $w\in C^{1,\cH_{\sigma+\tilde\sigma}}(B_{\rho_0}^+)$ with 
    $$
    \|w\|_{C^{1,\cH_{\sigma+\tilde\sigma}}(B_{\rho_0}^+)}\le C\left(\|w\|_{L^2(B_1^+,x_n^adx)}+\|\bff\|_{C^{0,\sigma}(B_1^+)}\right).
    $$
 Moreover, $w$ satisfies
    \begin{align}\label{eq:w-BC}
        \mean{A\nabla w+\bff,\vec e_n}=0\quad\text{on }B'_{\rho_0}.
    \end{align}
\end{theorem}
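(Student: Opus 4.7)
The plan is to run a Campanato-type iteration using Proposition \ref{prop:Dini-poly-approx} as the basic building block, applied at every point of $B'_{\rho_0}$. Fix $x_0\in B'_{\rho_0}$ and translate the equation horizontally so that $x_0$ plays the role of the origin (the weight $x_n^a$ is invariant under this shift since $x_{0,n}=0$, and $A,\bff$ only see the standard horizontal translation). Proposition \ref{prop:Dini-poly-approx} then yields an affine function $P_{x_0}$ satisfying
\[
\frac{1}{r^{n+2+a}\cJ_\sigma(r)^2}\int_{B_r(x_0)\cap B_1^+} x_n^a\,|w-P_{x_0}|^2 \leq CM^2, \qquad 0<r<r_0,
\]
where $M:=\|w\|_{L^2(B_1^+,x_n^a\,dx)}+\|\bff\|_{C^{0,\sigma}(B_1^+)}$, together with the identity $\mean{A(x_0)\nabla P_{x_0}+\bff(x_0),\vec e_n}=0$. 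A weighted Lebesgue-differentiation argument identifies $P_{x_0}(x_0)$ with $w(x_0)$ and $\nabla P_{x_0}$ with the classical gradient of $w$ at $x_0$; substituting these identifications into the identity above delivers the Neumann condition \eqref{eq:w-BC} pointwise on $B'_{\rho_0}$.

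For the gradient modulus, I exploit the uniqueness of the affine approximation by a triangle-inequality comparison: for $x_0,x_1\in B'_{\rho_0}$ at distance $d$, the difference $P_{x_0}-P_{x_1}$ is affine, and combining the two decay estimates on $B_{2d}(x_0)\cap B_1^+$ gives
\[
\int_{B_{2d}(x_0)\cap B_1^+} x_n^a\,|P_{x_0}-P_{x_1}|^2 \leq C d^{\,n+2+a}\cJ_\sigma(d)^2 M^2.
\]
An elementary reverse Poincaré bound for affine functions under the weight $x_n^a$ (computable in polar coordinates) converts the left-hand side into a lower bound of order $d^{\,n+2+a}|\nabla P_{x_0}-\nabla P_{x_1}|^2$, yielding $|\nabla w(x_0)-\nabla w(x_1)|\leq C M\cJ_\sigma(d)$ along $B'_{\rho_0}$. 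At interior points the weight is locally bounded and bounded below, so the equation becomes a uniformly elliptic Dini-mean-oscillation problem and Corollary \ref{cor:reg} supplies the corresponding modulus, in which the modified function $\tilde\sigma$ from \eqref{tildesigma} appears.

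The hard part is to stitch these boundary and interior moduli into a single modulus, valid uniformly up to $\Sigma$. For a mixed pair $(x_0,x_1)$ with $x_0\in B'$ and $x_1$ interior, the natural decomposition of the comparison through the vertical projection of $x_1$ onto $B'$ produces two contributions: a horizontal leg with modulus $\cJ_\sigma$ coming from the boundary Campanato comparison, and a vertical leg with a modulus involving $\tilde\sigma$ coming from the interior estimate. Summing these monotone contributions and then integrating once more in the radial variable to absorb the mismatch between the two scales produces $\cH_{\sigma+\tilde\sigma}=\cJ^2_{\sigma+\tilde\sigma}$. The pointwise $C^1$ bound on each $P_{x_0}$ provided by Proposition \ref{prop:Dini-poly-approx} controls $\|\nabla w\|_{L^\infty(B_{\rho_0}^+)}$ and, together with the decay estimate, $\|w\|_{L^\infty(B_{\rho_0}^+)}$; absorbing these into the modulus estimate closes the full $C^{1,\cH_{\sigma+\tilde\sigma}}$ bound with the asserted dependence on $\|w\|_{L^2(B^+_1,x_n^a\,dx)}$ and $\|\bff\|_{C^{0,\sigma}(B^+_1)}$.
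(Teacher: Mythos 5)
Your overall architecture matches the paper's: Proposition \ref{prop:Dini-poly-approx} applied at every boundary point is the engine, interior regularity supplies the $\tilde\sigma$ contribution, and the vertical chaining with one extra integration is exactly what turns $\cJ_{\sigma+\tilde\sigma}$ into $\cH_{\sigma+\tilde\sigma}$ (the paper's Step 4, Case 1, sums $\cJ_{\sigma+\tilde\sigma}(\rho_k)$ over a geometric chain of balls approaching $\Sigma$ and bounds the sum by $\cH_{\sigma+\tilde\sigma}$). Your treatment of the tangential modulus via comparison of the affine approximations $P_{x_0},P_{x_1}$ on a common ball, plus a reverse Poincar\'e for affine functions, is a legitimate alternative to the paper's horizontal chain of interior balls, and the derivation of \eqref{eq:w-BC} from $\D w(x_0)=\D P_{x_0}$ together with $\mean{A(x_0)\D P_{x_0}+\bff(x_0),\vec e_n}=0$ is the same as the paper's.

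There is, however, one step where your argument as written would fail: the claim that ``at interior points the weight is locally bounded and bounded below, so\ldots Corollary \ref{cor:reg} supplies the corresponding modulus.'' The constants in that uniformly elliptic estimate depend on the $H^1$-norm of the solution on the reference ball, so on a Whitney ball of radius $r$ at distance $\sim r$ from $\Sigma$ the estimate degenerates as $r\to0$ unless you first subtract the boundary affine approximation and rescale. This is precisely what the paper does in Steps 1--3: it sets $w_r(y)=(w-P)(x_r(y))$ with $x_r(y)=ry+\tfrac{11}{10}r\vec e_n$, and then uses the Campanato decay of Proposition \ref{prop:Dini-poly-approx} to prove $\|\D w_r\|_{L^2(B_{3/4})}\le C_{\bff,w}\,r\,\cJ_\sigma(r)$ and $|\bg_r|\le C_{\bff,w}\,r\,\sigma(r)$, which after unscaling makes the seminorm $[\D w]_{C^{0,\cJ_{\sigma+\tilde\sigma}}}$ on each Whitney ball bounded by a constant \emph{independent of} $r$ (estimate \eqref{eq:grad-w-semi-norm}); it also requires controlling the rescaled moduli $\tilde\omega_{\bg_r}$, $\tilde\omega_{A_r}$ in terms of $\tilde\sigma(rt)$ (Steps 2.2--2.4). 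Without this normalization your vertical chain does not close: the per-ball contributions are not summable. Since you already have Proposition \ref{prop:Dini-poly-approx} in hand, the fix is to apply the interior Dini-mean-oscillation estimate to $w-P_{x_0}$ on the Whitney balls above $x_0$, feeding the decay estimate into the $L^2$ term; the same remark applies to your Lebesgue-differentiation identification of $\D P_{x_0}$ with the classical gradient $\D w(x_0)$, which needs the uniform $L^\infty$ bound $\|\D(w-P_{x_0})\|_{L^\infty(B_{r/2}(x_0+\frac{11}{10}r\vec e_n))}\le C\,\cJ_{\sigma+\tilde\sigma}(r)\to0$ rather than the $L^2$ Campanato decay alone.
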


\begin{proof}
\emph{Step 1.}
Let $P$, $\bff_\infty$, $r_0$ be as in Proposition~\ref{prop:Dini-poly-approx}. For $0<r<r_0/3$ we consider a scaling 
$$
x(y)=x_r(y):=ry+\frac{11}{10}r\vec e_n,\quad y\in B_1.
$$
If we set $w_r(y):=(w-P)(x(y))$, then $w_r(y)$ solves the uniformly elliptic equation 
$$
-\div\left(\left(\frac{11}{10}+y_n\right)^aA(x(y))\D w_r(y)\right)=\div\left(\left(\frac{11}{10}+y_n\right)^ar\bff_\infty(x(y))\right),\quad y\in B_1.
$$
Indeed, if we write $x(y)=(x_1(y),\cdots,x_n(y))$, then $x_n(y)=r\left(y_n+\frac{11}{10}\right)$, thus 
\begin{align*}
    -\div\left(\left(\frac{11}{10}+y_n\right)^aA(x(y))\D w_r(y)\right)&=-\div\left(\frac{x_n(y)^a}{r^a}A(x(y))(r\D(w-P)(x(y)))\right)\\
    &=\div\left(\frac{x_n(y)^a}{r^{a-1}}\bff_\infty(x(y))\right)\\
    &=\div\left(\left(\frac{11}{10}+y_n\right)^ar\bff_\infty(x(y))\right).
\end{align*}
Writing 
$$A_r(y):=\left(\frac{11}{10}+y_n\right)^aA(x(y))\quad\text{and}\quad
\bg_r(y):=\left(\frac{11}{10}+y_n\right)^ar\bff_\infty(x(y)),\quad y\in B_1,
$$
we have 
$$
-\div(A_r\D w_r)=\div\bg_r\quad\text{in }B_1.
$$
We apply the results in \cite{DonEscKim18} (in particular, Lemma~2.11 and the equation (2.36)) to get 
\begin{align*}
   \|\D w_r\|_{L^\infty(B_{1/2})}\le C\|\D w_r\|_{L^2(B_{3/4})}+C\int_0^1\frac{\hat{\omega}_{\bg_r}(t)}t\,dt
   \end{align*}
and
   \begin{align*}
   |\D w_r(y^1)-\D w_r(y^2)|&\le C\|\D w_r\|_{L^2(B_{3/4})}|y^1-y^2|^\al+\omega_{\bg_r}^*(|y^1-y^2|)\\
   &\qquad+C\left(\|\D w_r\|_{L^2(B_{3/4})}+\int_0^1\frac{\hat{\omega}_{\bg_r}(t)}t\,dt\right)\omega_{A_r}^*(|y^1-y^2|),
\end{align*}
for any $y^1,\,y^2\in B_{1/2}$. As observed in the proof of Corollary~\ref{cor:reg}, $\hat\omega_{\bg_r}$, $\omega_{\bg_r}^*$ and $\omega_{A_r}^*$ can be redefined as 
$$
\hat\omega_{\bg_r}(t)=\tilde\omega_{\bg_r}(t),\quad \omega^*_{\bg_r}(t)=\cJ_{\tilde\omega_{\bg_r}}(t),\quad \omega^*_{A_r}(t)=\cJ_{\tilde\omega_{A_r}}(t).
$$
Then, the above estimates are equivalent to
\begin{align}\label{eq:grad-w-p-infty}
    \|\D(w-P)\|_{L^\infty(B_{r/2}(\frac{11}{10}r\vec e_n))}\le \frac{C}{r}\left(\|\D w_r\|_{L^2(B_{3/4})}+\cJ_{\tilde\omega_{\bg_r}}(1)\right),
\end{align}
and for any $x^1,\,x^2\in B_{r/2}(\frac{11}{10}r\vec e_n)$ 
\begin{align}\label{eq:grad-w-mod}\begin{split}
    |\D w(x^1)-\D w(x^2)|&\le \frac{C}{r}\|\D w_r\|_{L^2(B_{3/4})}\left|\frac{x^1-x^2}r\right|^\al+\frac1r\cJ_{\tilde\omega_{\bg_r}}\left(\frac{|x^1-x^2|}r\right)\\
    &\qquad+\frac{C}{r}\left(\|\D w_r\|_{L^2(B_{3/4})}+\cJ_{\tilde\omega_{\bg_r}}(1)\right)\cJ_{\tilde\omega_{A_r}}\left(\frac{|x^1-x^2|}r\right).
\end{split}\end{align}
We remark that instead of the previous boundary estimate in \cite{DonEscKim18}, the interior estimate in \cite{DonKim17} can be used and is easier to apply. However, we chose to exploit the one in \cite{DonEscKim18}, since those two results are simplified into the same estimate \eqref{eq:grad-w-p-infty}, \eqref{eq:grad-w-mod} under the $\al$-nonincreasing condition and the boundary estimate \cite{DonEscKim18} is valid for any $0<\al<1$ while the interior one \cite{DonKim17} works only for small $\al>0$.

\medskip\noindent \emph{Step 2.} In this step, we deal with the right-hand sides of \eqref{eq:grad-w-p-infty} and \eqref{eq:grad-w-mod}, i.e., we estimate $\|\D w_r\|_{L^2(B_{3/4})}$, $\cJ_{\tilde\omega_{\bg_r}}(1)$, $\cJ_{\tilde\omega_{\bg_r}}\left(\frac{|x^1-x^2|}r\right)$ and $\cJ_{\tilde\omega_{A_r}}\left(\frac{|x^1-x^2|}r\right)$.

For notational simplicity we denote by $C_{\bff,w}$ a constant (which may vary in each line) of the form 
$$
C_{\bff,w}=C(n,a,\Lambda,\al,\sigma)\left(\|w\|_{L^2(B_1^+,x_n^adx)}+\|\bff\|_{C^{0,\sigma}(B_1^+)}\right).
$$
Note that for $\rchi$, $\mu$, $\tau$ as in Proposition~\ref{prop:Dini-poly-approx}, $\rchi+\mu+\tau\le C_{\bff,w}$.

\medskip\noindent \emph{Step 2.1.} We first estimate $\|\D w_r\|_{L^2(B_{3/4})}$. By Proposition~\ref{prop:Dini-poly-approx}, we have
\begin{align*}
    \int_{B_1}|w_r(y)|^2\,dy&=\int_{B_1}|(w-P)(x(y))|^2\,dy\le \frac{C}{r^n}\int_{B_r(\frac{11}{10}r\vec e_n)}|w-P|^2\\
    &\le \frac{C}{r^{n+a}}\int_{B_r(\frac{11}{10}r\vec e_n)}x_n^a|w-P|^2\le\frac{C}{r^{n+a}}\int_{B_{3r}^+}x_n^a|w-P|^2\\
    &\le C_{\bff,w}^2r^2\cJ_\sigma(3r)^2\le C_{\bff,w}^2r^2\cJ_\sigma(r)^2,
\end{align*}
where in the third step we have used that $\frac1{10}r<x_n<\frac{21}{10}r$ in $B_r(\frac{11}{10}r\vec e_n)$, so $c(a)\le \frac{x_n^a}{r^a}\le C(a)$ in there for some positive constants $c(a)$ and $C(a)$ depending only on $a$.
Using Proposition~\ref{prop:Dini-poly-approx} again, together with $\bff_\infty(0)=0$, we see that for any $y\in B_1$
\begin{align*}
    |\bg_r(y)|&\le Cr|\bff_\infty(x(y))-\bff_\infty(0)|\le C_{\bff,w}r\sigma(|x(y)|)\\
    &\le C_{\bff,w}r\sigma(3r)\le C_{\bff,w}r\sigma(r).
\end{align*}
By applying Caccioppoli inequality and using the above estimates, we obtain
\begin{align*}
    \int_{B_{3/4}}|\D w_r(y)|^2\le C\int_{B_1}\left(|w_r(y)|^2+|\bg_r|^2\right)\le C_{\bff,w}^2r^2\cJ_\sigma(r)^2.
\end{align*}

\medskip\noindent \emph{Step 2.2.} In the following \emph{Step 2.2} - \emph{Step 2.4}, we estimate $\omega^*_{\bg_r}(t)$ and $\omega^*_{A_r}(t)$.

We first find moduli of continuity of $\bg_r$ and $A_r$. For this purpose, we claim that $\rho\sigma(r)\le 2\sigma(r\rho)$ for every $0<\rho<2$. Indeed, if $0<\rho<1$, then $\frac{\sigma(r)}{r}\le \frac{\sigma(r\rho)}{r\rho}$ by the monotonicity of $t\longmapsto \frac{\sigma(t)}t$, thus $\rho\sigma(r)\le \sigma(r\rho)$. On the other hand, if $1\le \rho<2$, then $\rho\sigma(r)\le 2\sigma(r)\le 2\sigma(r\rho)$.

Now, for any $y^1$, $y^2\in B_1$, we have 
\begin{align*}
    |\bg_r(y^1)-\bg_r(y^2)|&\le \left(\frac{11}{10}+y^1_n\right)^ar|\bff_\infty(x(y^1))-\bff_\infty(x(y^2))|\\
    &\qquad+\left|\left(\frac{11}{10}+y^1_n\right)^a-\left(\frac{11}{10}+y^2_n\right)^a\right|r|\bff_\infty(x(y^2))|\\
    &\le C_{\bff,w}r\sigma(|x(y^1)-x(y^2)|)+C|y^1-y^2|r|\bff_\infty(x(y^2))-\bff_\infty(0)|\\
    &\le C_{\bff,w}r\sigma(r|y^1-y^2|)+C_{\bff,w}|y^1-y^2|r\sigma(2r)\\
    &\le C_{\bff,w}r\sigma(r|y^1-y^2|)+C_{\bff,w}r\sigma(r)|y^1-y^2|\\
    &\le C_{\bff,w}r\sigma(r|y^1-y^2|).
\end{align*}
Similarly, for $y^1,\,y^2\in B_1$, 
\begin{align*}
    &|A_r(y^1)-A_r(y^2)|\\
    &\qquad\le \left(\frac{11}{10}+y^1_n\right)^a|A(x(y^1))-A(x(y^2))|+|A(x(y^2))|\left|\left(\frac{11}{10}+y^1_n\right)^a-\left(\frac{11}{10}+y^2_n\right)^a\right|\\
    &\qquad\le C_{\bff,w}\sigma(r|y^1-y^2|)+C_{\bff,w}|y^1-y^2|.
\end{align*}
Therefore, 
$$
\omega_{\bg_r}(t)\le \eta_{\bg_r}(t)\le C_{\bff,w}r\sigma(rt),\quad \omega_{A_r}(t)\le \eta_{A_r}(t)\le C_{\bff,w}(\sigma(rt)+t).
$$

\medskip\noindent \emph{Step 2.3.} We claim that 
\begin{align*}
    \tilde\omega_{\bg_r}(t)\le C_{\bff,w}r\left(\tilde\sigma(rt)+\sigma(r)t^\al\right),\qquad \cJ_{\tilde\omega_{\bg_r}}(t)\le C_{\bff,w}r\left(\cJ_{\tilde\sigma}(rt)+\sigma(r)t^\al\right).
\end{align*}
Indeed, given $t\in (0,1)$, take $N\in \mathbb{N}$ such that $\ka^{-N}t\le 1<\ka^{-(N+1)}t$. Then 
$$
\tilde\omega_{\bg_r}(t)=\sum_{i=1}^\infty\ka^{i\al}\omega_{\bg_r}(\ka^{-i}t)[\ka^{-i}t\le1]+\sum_{i=N+1}^\infty\ka^{i\al}\omega_{\bg_r}(1).
$$
Using $\ka^{N+1}<t$, we can compute
$$
\sum_{i=N+1}^\infty\ka^{i\al}\omega_{\bg_r}(1)\le C\omega_{\bg_r}(1)\ka^{(N+1)\al}\le C\omega_{\bg_r}(1)t^\al.
$$
Thus
\begin{align*}
    \tilde\omega_{\bg_r}(t)&\le \sum_{i=1}^\infty\ka^{i\al}\omega_{\bg_r}(\ka^{-i}t)[\ka^{-i}t\le1]+C\omega_{\bg_r}(1)t^\al\\
    &\le C_{\bff,w}r\sum_{i=1}^\infty\ka^{i\al}\sigma(\ka^{-i}rt)[\ka^{-i}rt\le1]+C_{\bff,w}r\sigma(r)t^\al\\
    &\le C_{\bff,w}r\tilde\sigma(rt)+C_{\bff,w}r\sigma(r)t^\al.
\end{align*}
This is the first estimate in the claim. The second estimate follows from the first one
\begin{align*}
    \cJ_{\tilde\omega_{\bg_r}}(t)&\le C_{\bff,w}r\int_0^t\frac{\tilde\sigma(rs)+\sigma(r)s^\al}s\,ds=C_{\bff,w}r\left(\int_0^{rt}\frac{\tilde\sigma(\rho)}\rho\,d\rho+\sigma(r)t^\al\right)\\
    &=C_{\bff,w}r(\cJ_{\tilde\sigma}(rt)+\sigma(r)t^\al).
\end{align*}

\medskip\noindent \emph{Step 2.4.} We show that
$$
\tilde\omega_{A_r}(t)\le C_{\bff,w}(\tilde\sigma(rt)+t^\al),\qquad \cJ_{\tilde\omega_{A_r}}(t)\le C_{\bff,w}(\cJ_{\tilde\sigma}(rt)+t^\al).
$$
Indeed, for given $t\in(0,1)$, we take $N\in \mathbb{N}$ such that $\ka^{-N}t\le 1<\ka^{-(N+1)}t$. Then 
$$
\tilde\omega_{A_r}(t)=\sum_{i=1}^N\ka^{i\al}\omega_{A_r}(\ka^{-i}t)+\sum_{i=N+1}^\infty\ka^{i\al}\omega_{A_r}(1).
$$
As we have seen in \emph{Step 2.3}, $\ka^{N+1}<t$ implies $\sum_{i=N+1}^\infty\ka^{i\al}\omega_{A_r}(1)\le C\omega_{A_r}(1)t^\al$. By using this and $\omega_{A_r}(t)\le C_{\bff,w}(\sigma(rt)+t)$, we have 
\begin{align*}
    \tilde\omega_{A_r}(t)&\le C_{\bff,w}\sum_{i=1}^N\ka^{i\al}\sigma(\ka^{-i}rt)+C_{\bff,w}t\sum_{i=1}^N\ka^{i(\al-1)}+C_{\bff,w}(\sigma(r)+1)t^\al\\
    &\le C_{\bff,w}\sum_{i=1}^\infty\ka^{i\al}\sigma(\ka^{-i}rt)[\ka^{-i}rt\le1]+C_{\bff,w}t\ka^{N(\al-1)}+C_{\bff,w}t^\al\\
    &\le C_{\bff,w}\tilde\sigma(rt)+C_{\bff,w}t^\al,
\end{align*}
where in the last inequality we used $t<\ka^N$ (which gives $\ka^{N(\al-1)}<t^{\al-1}$). As we have seen in \emph{Step 2.3}, this estimate for $\tilde\omega_{A_r}$ implies $\cJ_{\tilde\omega_{A_r}}(t)\le C_{\bff,w}(\cJ_{\tilde\sigma}(rt)+t^\al)$.

\medskip\noindent\emph{Step 3.} In this step, we simplify the estimates \eqref{eq:grad-w-p-infty} and \eqref{eq:grad-w-mod} by making use of the results in \emph{Step 2}. We start with \eqref{eq:grad-w-p-infty}.
\begin{align}
    \label{eq:grad-w-p-infty-1}
    \|\D(w-P)\|_{L^\infty(B_{r/2}(\frac{11}{10}r\vec e_n))}\le C_{\bff,w}(\cJ_\sigma(r)+\cJ_{\tilde\sigma}(r)+\sigma(r))\le C_{\bff,w}\cJ_{\sigma+\tilde\sigma}(r).
\end{align}
Regarding \eqref{eq:grad-w-mod}, we have for $x^1,\,x^2\in B_{r/2}(\frac{11}{10}r\vec e_n)$
\begin{multline*}
    |\D w(x^1)-\D w(x^2)|\\
    \le C_{\bff,w}\cJ_\sigma(r)\left(\frac{|x^1-x^2|}r\right)^\al+C_{\bff,w}\left(\cJ_{\tilde\sigma}(|x^1-x^2|)+\sigma(r)\left(\frac{|x^1-x^2|}r\right)^\al\right)\\
    +C_{\bff,w}(\cJ_\sigma(r)+\cJ_{\tilde\sigma}(r))\left(\cJ_{\tilde\sigma}(|x^1-x^2|)+\left(\frac{|x^1-x^2|}r\right)^\al\right).
\end{multline*}
Since $|x^1-x^2|<r$, we have $\sigma(r)\left(\frac{|x^1-x^2|}r\right)^\al\le \sigma(|x^1-x^2|)$ by $\al$-nonincreasing condition $(D3)$. Similarly, we also have $\cJ_\sigma(r)\left(\frac{|x^1-x^2|}r\right)^\al\le\cJ_\sigma(|x^1-x^2|)$ and $\cJ_{\tilde\sigma}(r)\left(\frac{|x^1-x^2|}r\right)^\al\le\cJ_{\tilde\sigma}(|x^1-x^2|)$. Therefore,
\begin{align*}
    |\D w(x^1)-\D w(x^2)|\le C_{\bff,w}\cJ_{\sigma+\tilde\sigma}(|x^1-x^2|).
\end{align*}
As the above argument in this section is valid when we vary the center of the ball $B_{r/2}\left(\frac{11}{10}r\vec e_n\right)$, we see that for any $x^0\in\overline{B^+_{1/2}}$ and $0<r<r_0/3$
\begin{align}
    \label{eq:grad-w-semi-norm}
    [\D w]_{C^{0,\cJ_{\sigma+\tilde\sigma}}\left(B_{r/2}\left(x^0+\frac{11}{10}r\vec e_n\right)\right)}\le C_{\bff,w}.
\end{align}

\medskip\noindent\emph{Step 4.} We are now ready to complete the proof.

Since $0<r<r_0/3$ is arbitrary, \eqref{eq:grad-w-p-infty-1} gives $|\D(w-P)(0)|=0$, which combined with Proposition~\ref{prop:Dini-poly-approx} yields $$
|\D w(0)|\le C_{\bff,w}.
$$ 
In addition, the identity $\D w(0)=\D P(0)$, together with \eqref{BCP}, implies 
$$
\mean{A(0)\D w(0)+\bff(0),\vec e_n}=0.
$$
Again, by varying the center of balls and half-balls $x_0$ and repeating the argument in this section, we can obtain \eqref{eq:w-BC}.

Next, we claim that for $\rho_0>0$ small universal
$$
[\D w]_{C^{0,\cH_{\sigma+\tilde\sigma}}(B_{\rho_0}^+)}\le C_{\bff,w},
$$
which together with $|\D w(0)|\le C_{\bff,w}$ will conclude the proof. For this purpose, let $x,\,y\in B_{\rho_0}^+$ be given. We write $x=(x',x_n)$ and $y=(y',y_n)$. Let $m$ be the slope of the line connecting $x$ and $y$, i.e., $m=\frac{|x_n-y_n|}{|x'-y'|}$. We let $m=\infty$ when $x'=y'$, and divide our proof into two cases
$$
\text{either} \quad m\ge1\, (\text{including } m=\infty)\quad \text{or}\quad 0\le m<1.
$$
\medskip\noindent\emph{Case 1.} We first consider the case $m\ge1$. Without loss of generality, we may assume $x_n\ge y_n$. Let $z:=(y',x_n)$. We will estimate separately the horizontal variation $|\D w(x)-\D w(z)|$ and the vertical variation $|\D w(z)-\D w(y)|$, which will give the bound for $|\D w(x)-\D w(y)|$ by triangle inequality. 

We first deal with the horizontal variation $|\D w(x)-\D w(z)|$. If $x'=y'$, then $x=z$, thus $|\D w(x)-\D w(z)|=0$. Hence, we may assume $x'\neq y'$, and let $e:=\frac{(y'-x',0)}{|y'-x'|}$ and $r_1:=\frac{10}{11}|x-z|=\frac{10}{11}|x'-y'|$. Note that $z=x+|x'-y'|e=x+\frac{11}{10}r_1e$. From $m\ge1$ we infer $\frac{11}{10}r_1=|x'-y'|\le x_n-y_n\le x_n$. This allows us to apply \eqref{eq:grad-w-semi-norm} to obtain
\begin{multline*}
    [\D w]_{C^{0,\cJ_{\sigma+\tilde\sigma}}(B_{r_1/2}(x))}+[\D w]_{C^{0,\cJ_{\sigma+\tilde\sigma}}(B_{r_1/2}(x+r_1e/2))}+[\D w]_{C^{0,\cJ_{\sigma+\tilde\sigma}}(B_{r_1/2}(x+r_1e))}\\
    \le C_{\bff,w}.
\end{multline*}
This, together with $z=x+\frac{11}{10}r_1e\in B_{h_1/2}(x+r_1e)$, yields
\begin{align*}
    |\D w(x)-\D w(z)|& \le|\D w(x)-\D w(x+r_1e/2)|+|\D w(x+r_1e/2)-\D w(x+r_1e)|\\
    &\qquad+|\D w(x+r_1e)-\D w(z)|\\
    &\le C_{\bff,w}\cJ_{\sigma+\tilde\sigma}(r_1/2)\le C_{\bff,w}\cJ_{\sigma+\tilde\sigma}(|x-y|).
\end{align*}

Now we estimate the vertical variation $|\D w(z)-\D w(y)|$. We take $r_2>0$ such that $\frac{11}{10}r_2\in (x_n-y_n,x_n)$. We consider sequences $\rho_k$ and $s_k$, $k\ge0$, defined by
$$
\rho_k:=(r_2/2)\left(\frac6{11}\right)^k,\quad \begin{cases}
    s_0:=x_n,\\
    s_k:=x_n-\sum_{i=0}^{k-1}\rho_i,\,\,\,k\ge1.
\end{cases}
$$
Here, $\rho_k$ and $s_k$ satisfy the following properties:
\begin{align*}
    \begin{cases}
        \text{- }(y',s_{k+1})\subset\partial B_{\rho_k}(y',s_k),\quad k\ge0,\\
        \text{- }[y,z]\subset \cup_{k=0}^\infty B_{\rho_k}(y',s_k),\text{ i.e., the chain of balls $\{B_{\rho_k}(y',s_k)\}$ connects $z$ and $y$,}\\
        \text{- }\rho_k\le\frac5{11}s_k \text{ (which enables us to apply \eqref{eq:grad-w-semi-norm} for $B_{\rho_k}(y',s_k)$)}.
    \end{cases}
\end{align*}
Indeed, the first property is clear by the definition of $s_k$. The second follows from the simple computation
$$
\lim_{k\to\infty}s_k=x_n-\sum_{i=0}^\infty \rho_i=x_n-\frac{11}{10}r_2<y_n.
$$
For the last one, we observe that for $k\ge 1$ (when $k=0$, it is trivial as $x_n>\frac{11}{10}r_2$)
\begin{align*}
    s_k&=x_n-\sum_{i=0}^{k-1}\rho_i=x_n-\frac{11}{10}r_2\left(1-\left(\frac6{11}\right)^k\right)\ge \frac{11}{10}r_2-\frac{11}{10}r_2\left(1-\left(\frac6{11}\right)^k\right)\\
    &=\frac{11}{10}r_2\left(\frac6{11}\right)^k=\frac5{11}\rho_k.
\end{align*}

We then have 
\begin{align*}
    |\D w(z)-\D w(y)|&\le \sum_{k=0}^\infty|\D w(y',s_k)-\D w(y',s_{k+1})|\le C_{\bff,w}\sum_{k=0}^\infty\cJ_{\sigma+\tilde\sigma}(\rho_k)\\
    &\le C_{\bff,w}\int_0^{r_2/2}\frac{\cJ_{\sigma+\tilde\sigma}(t)}t\,dt=C_{\bff,w}\cH_{\sigma+\tilde\sigma}\left(r_2/2\right).
\end{align*}
As the constant $C_{\bff,w}$ is independent of $r_2$ and $r_2\in\left(\frac{10}{11}(x_n-y_n),\frac{10}{11}x_n\right)$ can be chosen arbitrarily, we can pick a particular $r_2$ satisfying $r_2<2(x_n-y_n)$ to get 
$$
|\D w(z)-\D w(y)|\le C_{\bff,w}\cH_{\sigma+\tilde\sigma}(x_n-y_n)\le C_{\bff,w}\cH_{\sigma+\tilde\sigma}(|x-y|).
$$
Therefore, 
$$
|\D w(x)-\D w(y)|\le |\D w(x)-\D w(z)|+|\D w(z)-\D w(y)|\le C_{\bff,w}\cH_{\sigma+\tilde\sigma}(|x-y|).
$$

\medskip\noindent\emph{Case 2.} Now we deal with the case when the slope $m$ is between $0$ and $1$. Note that $x'-y'\neq0$ unless $x=y$.
For $e=\frac{(y'-x',0)}{|y'-x'|}$ as in \emph{Case 1}, let $e':=\frac{y'-x'}{|y'-x'|}$. To use the result in \emph{Case 1}, we consider the following lines of slope $1$
$$
\mathcal L_x:=\{x+(re',r)\,:\,r>0\},\quad \mathcal L_y:=\{y+(-re',r)\,:\,r>0\}.
$$
The intersection $\mathcal{L}_{x}\cap \mathcal L_y$ cannot be empty since the slope $m$ of the line connecting $x$ and $y$ is less than $1$. Thus $\mathcal L_x\cap \mathcal L_y=\{z\}$ for some point $z=(z',z_n)$. As the slope of both $\mathcal L_x$ and $\mathcal L_y$ is $1$ and $z'\in[x',y']$, we have
$$
|x-z|=\sqrt2|x'-z'|\le\sqrt2|x'-y'|\le\sqrt2|x-y|,
$$
and similarly 
$$
|y-z|\le\sqrt2|x-y|.
$$
Therefore, by using those inequalities and the result in \emph{Case 1}, we obtain
\begin{align*}
    |\D w(x)-\D w(y)|&\le |\D w(x)-\D w(z)|+|\D w(z)-\D w(y)|\\
    &\le C_{\bff,w}\cH_{\sigma+\tilde\sigma}(|x-z|)+C_{\bff,w}\cH_{\sigma+\tilde\sigma}(|y-z|)\\
    &\le C_{\bff,w}\cH_{\sigma+\tilde\sigma}(\sqrt2|x-y|)\\
    &\le C_{\bff,w}\cH_{\sigma+\tilde\sigma}(|x-y|).
\end{align*}
Here, in the last inequality we used that $\al$-nonincreasing condition $(D3)$ is satisfied for $\cH_{\sigma+\tilde\sigma}$.
\end{proof}

\subsection{The $C^k$ estimate}

Aim of this section is the proof of Theorem \ref{teok}. This is done by an induction argument in \cite{TerTorVit22}. First, we adapt some lemmas in \cite{TerTorVit22} to general moduli of continuity.

\begin{lemma}\label{lem1}
Let $k\in\mathbb N$ and $\omega$ be a modulus of continuity. Let $f\in C^{k+1,\omega}(B_1)$ with $f(x',0)=0$. Then $f/x_n\in C^{k,\omega}(B_1)$.
\end{lemma}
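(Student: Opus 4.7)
The plan is to write the candidate $g=f/x_n$ as an average of $\partial_n f$ and then differentiate under the integral sign. Since $f(x',0)=0$ and $f\in C^{k+1,\omega}\subset C^1$, the fundamental theorem of calculus gives
\begin{equation*}
f(x',x_n)=\int_0^{x_n}\partial_n f(x',s)\,ds=x_n\int_0^1\partial_n f(x',tx_n)\,dt,
\end{equation*}
so that $g(x):=f(x)/x_n$ extends continuously to all of $B_1$ via $g(x',0)=\partial_n f(x',0)$, and admits the integral representation
\begin{equation*}
g(x)=\int_0^1\partial_n f(x',tx_n)\,dt\qquad\text{for all }x\in B_1.
\end{equation*}

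Set $h:=\partial_n f\in C^{k,\omega}(B_1)$. Since $h\in C^k$ and the integrand is jointly continuous together with all derivatives up to order $k$, one may differentiate under the integral. For any multiindex $\beta=(\beta',\beta_n)$ with $|\beta|\le k$,
\begin{equation*}
D^\beta g(x)=\int_0^1 t^{\beta_n}(D^\beta h)(x',tx_n)\,dt,
\end{equation*}
and the $L^\infty$ bound $\|D^\beta g\|_{L^\infty(B_1)}\le\|D^\beta h\|_{L^\infty(B_1)}$ is immediate. This shows $g\in C^k(\overline{B_1})$.

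It remains to estimate, for $|\beta|=k$, the $\omega$-modulus of continuity of $D^\beta g$. For $x,y\in B_1$ and $t\in[0,1]$ one has
\begin{equation*}
|(x',tx_n)-(y',ty_n)|^2=|x'-y'|^2+t^2|x_n-y_n|^2\le|x-y|^2,
\end{equation*}
so using the monotonicity of $\omega$ in $(D1)$,
\begin{equation*}
|D^\beta g(x)-D^\beta g(y)|\le\int_0^1 t^{\beta_n}\,[D^\beta h]_{C^{0,\omega}(B_1)}\,\omega(|(x',tx_n)-(y',ty_n)|)\,dt\le[D^\beta h]_{C^{0,\omega}(B_1)}\,\omega(|x-y|).
\end{equation*}
Since $h\in C^{k,\omega}(B_1)$, the seminorm $[D^\beta h]_{C^{0,\omega}(B_1)}$ is finite, which yields $g\in C^{k,\omega}(B_1)$ together with a quantitative bound $\|g\|_{C^{k,\omega}(B_1)}\le C\|f\|_{C^{k+1,\omega}(B_1)}$.

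There is no real obstacle here: the argument is elementary once the integral representation is written down. The only point that requires a bit of care is the passage to the boundary $\{x_n=0\}$, which is handled by the representation formula itself (it provides a continuous, and in fact $C^{k,\omega}$, extension of $f/x_n$ through the hyperplane), and the observation that the $t\in[0,1]$ scaling in the $n$-th coordinate is a contraction, so that $\omega$ evaluated at the scaled increment is dominated by $\omega(|x-y|)$.
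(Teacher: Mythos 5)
Your proof is correct and follows essentially the same route as the paper: both write $f/x_n=\int_0^1\partial_n f(x',tx_n)\,dt$, differentiate under the integral sign, and conclude via the monotonicity of $\omega$ applied to the contracted increment $|(x',tx_n)-(y',ty_n)|\le|x-y|$. Your version is slightly more careful in recording the factor $t^{\beta_n}$ from the chain rule, but this changes nothing.
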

\proof
By direct computation
$$f(x',x_n)=\int_0^{x_n}\partial_nf(x',t)dt=x_n\int_0^1\partial_nf(x',sx_n)ds.$$
Hence
\begin{equation*}
\frac{f(x',x_n)}{x_n}=\int_0^1\partial_nf(x',sx_n)ds
\end{equation*}
and regularity follows by Leibniz rule. Indeed, if we consider a multiindex $|\beta|=k$, then taking two points $x,\overline x$
\begin{align*}
|D^\beta(f/x_n)(x)-D^\beta(f/x_n)(\overline x)|&\leq \int_0^1|D^\beta\partial_nf(x',sx_n)-D^\beta\partial_nf(\overline x',s\overline x_n)|ds\\
&\leq  C\int_0^1 \omega(|(x',sx_n)-(\overline x',s\overline x_n)|) ds\leq C\omega(|x-\overline x|).
\end{align*}
\endproof

\begin{lemma}\label{lem2}
Let $a>-1$, $k\in\mathbb N$ and $\omega$ be a modulus of continuity satisfying the $\alpha$-nonincreasing condition for some $\alpha\in(0,1]$. Let $g\in C^{k,\omega}(B_1)$. Then
\begin{equation*}
\psi(x',x_n)=\frac{1}{x_n|x_n|^a}\int_0^{x_n}|t|^ag(x',t)dt
\end{equation*}
belongs to $C^{k,\omega}(B_1)$.
\end{lemma}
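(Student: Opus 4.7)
The plan is to find a cleaner representation of $\psi$ via a change of variables that absorbs the singular prefactor $1/(x_n|x_n|^a)$, then to differentiate under the integral sign. More precisely, I would substitute $t=sx_n$ with $s\in[0,1]$ in the integral defining $\psi$. For $x_n>0$ one gets $|t|^a=s^ax_n^a$ and $dt=x_n\,ds$, so
\begin{equation*}
\int_0^{x_n}|t|^ag(x',t)\,dt=x_n^{a+1}\int_0^1 s^a g(x',sx_n)\,ds,
\end{equation*}
and division by $x_n|x_n|^a=x_n^{a+1}$ produces the identity
\begin{equation*}
\psi(x',x_n)=\int_0^1 s^a g(x',sx_n)\,ds.
\end{equation*}
A parallel computation (keeping track of the signs $|t|=-t$ and $x_n|x_n|^a=-|x_n|^{a+1}$) shows the same formula holds for $x_n<0$, and by continuity for $x_n=0$ as well, where $\psi(x',0)=g(x',0)/(a+1)$. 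Since $a>-1$, the weight $s^a$ is integrable on $[0,1]$, so this representation is well-defined on all of $B_1$.

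Next, I would differentiate under the integral sign. For any multiindex $\beta=(\beta',\beta_n)$ with $|\beta|\le k$, since $D^\beta g\in C(B_1)$ and an extra factor $s^{\beta_n}$ appears from each $\partial_n$ falling on $g(x',sx_n)$, one obtains
\begin{equation*}
D^\beta\psi(x',x_n)=\int_0^1 s^{a+\beta_n}D^\beta g(x',sx_n)\,ds,
\end{equation*}
which is well defined because $a+\beta_n>-1$ and $D^\beta g$ is bounded on $B_1$. Hence $\psi\in C^k(B_1)$ with an explicit bound on $\|\psi\|_{C^k(B_1)}$ in terms of $\|g\|_{C^k(B_1)}$.

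Finally, to establish the $C^{k,\omega}$ estimate, fix $|\beta|=k$ and two points $x,\bar x\in B_1$. From the formula above,
\begin{equation*}
|D^\beta\psi(x)-D^\beta\psi(\bar x)|\le [D^\beta g]_{C^{0,\omega}(B_1)}\int_0^1 s^{a+\beta_n}\,\omega\bigl(|(x'-\bar x',s(x_n-\bar x_n))|\bigr)ds.
\end{equation*}
Since $|(x'-\bar x',s(x_n-\bar x_n))|\le|x-\bar x|$ for every $s\in[0,1]$ and $\omega$ is nondecreasing, the $\omega$ inside the integral is bounded by $\omega(|x-\bar x|)$, yielding
\begin{equation*}
|D^\beta\psi(x)-D^\beta\psi(\bar x)|\le\frac{[D^\beta g]_{C^{0,\omega}(B_1)}}{a+\beta_n+1}\,\omega(|x-\bar x|).
\end{equation*}
This is the desired $C^{k,\omega}$ estimate. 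There is no real obstacle here: the only delicate point is the change of variables across $\{x_n=0\}$, which simply requires keeping track of the signs, and the argument never uses the $\alpha$-nonincreasing condition $(D3)$ explicitly since the monotonicity $(D1)$ of $\omega$ suffices to pass $s\in[0,1]$ inside.
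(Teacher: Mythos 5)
Your proof is correct, and it takes a genuinely different and more elementary route than the paper's. The paper proves the base case $k=0$ by adapting \cite[Lemma 7.5]{SirTerVit21a}: it subtracts the boundary value, writing $\psi(x',x_n)=\frac{1}{x_n|x_n|^a}\int_0^{x_n}|t|^a(g(x',t)-g(x',0))\,dt+\frac{g(x',0)}{1+a}$, reduces matters to the vertical variation, and runs a two-regime case analysis ($y_1<y_2/2$ versus $y_2/2\le y_1<y_2$) in which the $\alpha$-nonincreasing condition $(D3)$ is used to control ratios such as $\omega(y_i)/\omega(y_i/2)$ and $(y_2-y_1)/\omega(y_2-y_1)$; higher $k$ is then handled by induction following \cite{TerTorVit22}. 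You instead observe that the substitution $t=sx_n$ --- the same device the paper itself uses in Lemma \ref{lem1} --- gives the exact representation $\psi(x',x_n)=\int_0^1 s^a g(x',sx_n)\,ds$, valid on both sides of $\{x_n=0\}$ and across it by continuity, after which differentiation under the integral sign and the monotonicity of $\omega$ yield $[D^\beta\psi]_{C^{0,\omega}}\le (a+\beta_n+1)^{-1}[D^\beta g]_{C^{0,\omega}}$ in one stroke. This is shorter, produces an explicit constant, dispenses with both the induction and the case analysis, and shows that hypothesis $(D3)$ is superfluous for this particular lemma (only the modulus condition $(D1)$ is used). The only step worth spelling out in a polished version is the justification of differentiating under the integral sign (dominated convergence applied to the difference quotients, with dominating function $C s^a$, integrable since $a>-1$), but that is routine.
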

\proof
The proof is done by induction arguing as in \cite[Lemma 2.4]{TerTorVit22}. We just give some details of the starting step $k=0$, adapting the proof of \cite[Lemma 7.5]{SirTerVit21a} to the case of general moduli. We can rewrite $\psi$ as
\begin{equation*}
\psi(x',x_n)=\frac{1}{x_n|x_n|^a}\int_0^{x_n}|t|^a(g(x',t)-g(x',0))dt +\frac{g(x',0)}{1+a}.
\end{equation*}
Let us do some preliminary remarks. First, we can prove the result on the upper half ball; that is, considering $x,\overline x\in \overline{B_1^+}$. Indeed the situation in the lower half ball is the same, and then the full uniform continuity follows by 
$$\lim_{x_n\to0+}\psi(x'_0,x_n)=\lim_{x_n\to0-}\psi(x'_0,x_n)=\frac{g(x'_0,0)}{a+1}.$$
Then, we remark that the result in the upper half ball follows by triangle inequality if we just prove that $|\psi(x',x_n)-\psi(x',\overline x_n)|\leq C\omega(|x_n-\overline x_n|)$ being the control over the tangential variation trivial. So, fixed $x'\in B_1'$, we consider the set $\mathcal A$ of $0\leq y\leq1$ such that $(x',y)\in\overline B_1^+$. Then we define $\mathcal A_1:=\{(y_1,y_2)\in\mathcal A\times\mathcal A \ : \ y_1< y_2/2\}$, $\mathcal A_2:=\{(y_1,y_2)\in\mathcal A\times\mathcal A \ : \ y_2/2\le y_1< y_2\}$. In $\mathcal A_1$, by monotonicity of $\omega$, we have $\omega(y_1/2)\leq\omega(y_2/2)\leq \omega(y_2-y_1)$, and hence
\begin{eqnarray*}
\frac{|\psi(x',y_1)-\psi(x',y_2)|}{\omega(y_2-y_1)}&\leq& \sum_{i=1}^2\frac{1}{y_i^{1+a}\omega(y_i/2)}\int_0^{y_i}t^a|g(x',t)-g(x',0)|dt\\
&\leq& C \sum_{i=1}^2\frac{\omega(y_i)}{\omega(y_i/2)}\leq C,
\end{eqnarray*}
where the last inequality follows by the $\alpha$-nonincreasing condition; that is, $\omega(r)r^{-\alpha}$ is nonincreasing for some $\alpha\in(0,1]$. Then, in $\mathcal A_2$
\begin{eqnarray*}
|\psi(x',y_1)-\psi(x',y_2)|&\leq& \frac{1}{y_2^{1+a}}\int_{y_1}^{y_2}t^a|g(x',t)-g(x',0)|dt\\ 
&&+ \left(\frac{1}{y_1^{1+a}}-\frac{1}{y_2^{1+a}}\right)\int_{0}^{y_1}t^a|g(x',t)-g(x',0)|dt\\
&=& I+II.
\end{eqnarray*}
For the first term $I$, using that $\omega(y_2/2)(y_2/2)^{-1}\leq\omega(y_2-y_1)(y_2-y_1)^{-1}$, let us remark that there exists $y_1<\xi<y_2$ such that
\begin{eqnarray*}
\frac{I}{\omega(y_2-y_1)}&=& \frac{y_2-y_1}{y_2^{1+a}\omega(y_2-y_1)}\xi^a|g(x',\xi)-g(x',0)|\\ 
&\leq& C\frac{\omega(\xi)}{\omega(y_2/2)}\left(\frac{\xi}{y_2}\right)^a\leq C\frac{\omega(\xi)}{\omega(y_2)}\left(\frac{\xi}{y_2}\right)^a\le C,
\end{eqnarray*}
where in the last inequality we used the fact that $1/2<\xi/y_2<1$ and. In order to deal with the second term $II$, we remark that
$$ \left(\frac{1}{y_1^{1+a}}-\frac{1}{y_2^{1+a}}\right)= \left(1-\left(1-\frac{y_2-y_1}{y_2}\right)^{1+a}\right)\frac{1}{y_1^{1+a}}\leq \frac{c}{y_1^{1+a}}\frac{y_2-y_1}{y_2}.$$
Hence
\begin{equation*}\frac{II}{\omega(y_2-y_1)}\leq \frac{c}{1+a} \frac{\omega(y_1)}{y_2}\frac{y_2-y_1}{\omega(y_2-y_1)}\leq  \frac{c}{1+a} \frac{\omega(y_1)}{y_2}\frac{y_2}{\omega(y_2)} \le\frac{c}{1+a}.\qedhere\end{equation*}
%
\endproof

The previous lemma implies the following 
\begin{remark}\label{lem3}
Under the hypothesis of Lemma \ref{lem2}, we have
\begin{equation*}
\varphi(x',x_n)=\frac{1}{|x_n|^a}\int_0^{x_n}|t|^ag(x',t)dt
\end{equation*}
possesses partial derivative $\partial_n\varphi\in C^{k,\omega}(B_1)$.
In fact
$$\partial_n\varphi(x',x_n)=-\frac{a}{|x_n|^ax_n}\int_0^{x_n}|t|^ag(x',t)dt+g(x',x_n).$$
\end{remark}

\begin{proof}[Proof of Theorem \ref{teok}]
The proof follows the one of \cite[Lemma 2.7]{TerTorVit22}, but we repeat it here for the sake of completeness.

We procede by induction. The result in case $k=0$ is true by Theorem \ref{thm:Dini-ratio-reg} and by a standard covering argument, which gives also the boundary condition \eqref{Neumann}. Let us assume the result true for $k\in\mathbb N$ and prove it for $k+1$. Thus, we are assuming $A,\bff\in C^{k+1,\omega}(B_1^+)$ where $\omega$ is a $j$-Dini function, and we would like to prove that $w_i:=\partial_{i}w\in C^{k+1,\sigma}(B_r^+)$ for any $i=1,...,n$ and for some $(j-2)$-Dini function $\sigma$.
Notice that the tangential derivatives $w_i$ with $i=1,...,n-1$ do solve
\begin{equation*}\label{evenLa3x}
-\div\left(x_n^aA\nabla w_i\right)=\div\left(x_n^a(\partial_{i}\bff+\partial_{i}A\nabla w)\right) \qquad\mathrm{in \ } B_1^+,
\end{equation*}
in the sense that
\begin{equation*}
-\int_{B_1^+}x_n^a\mean{A\nabla w_i,\nabla\phi}=\int_{B_1^+}x_n^a\mean{\partial_{i}\bff+\partial_{i}A\nabla w,\nabla\phi}
\end{equation*}
for every test function $\phi\in C^{\infty}_0(B_1)$ (when $a\geq1$ test functions can be taken in $C^{\infty}_0(B_1\setminus\Sigma)$).

Let us remark here that the assumption $A,\bff\in C^{k+1,\omega}$ implies $A,\bff\in C^{k,1}$ which gives, by standard Schauder estimates for degenerate equations \cite[Theorem 1.1]{TerTorVit22}, $w\in C^{k+1,\alpha}$ for any $\alpha\in(0,1)$.
Hence, the field $\partial_{i}\bff+\partial_{i}A\nabla w$ belongs to $C^{k,\omega}(B_r^+)$, by inductive hypothesis we have
\begin{equation}\label{xireg}
w_i\in C^{k+1,\sigma}(B_r^+)\qquad \mathrm{for \  any \ } i=1,...,n-1.
\end{equation}

Now, equation \eqref{evenLa3} can be rewritten as
\begin{equation*}
-\div\left(A\nabla w\right)=\frac{a\mean{A\nabla w+\bff,\vec e_{n}}}{x_n}+\div \bff.
\end{equation*}
Hence
\begin{equation}\label{eqphi}
\partial_n\varphi+\frac{a}{x_n}\varphi=g:=-\div \bff+\partial_n\bff_{n}-\sum_{i=1}^{n-1}\partial_{i}(\mean{A\nabla w,\vec e_i}),
\end{equation}
where $g\in C^{k,\omega}(B_r^+)$ and $\varphi:=\mean{A\nabla w+\bff,\vec e_{n}}$ with $\varphi(x',0)=0$ by \eqref{Neumann}. Since $\partial_n\varphi+\frac{a}{x_n}\varphi=x_n^{-a}\partial_n\left(x_n^a\varphi\right)$, then one would like to prove that
\begin{equation}\label{phireg}
\varphi(x',x_n)=\frac{1}{x_n^a}\int_0^{x_n}t^ag(x',t)dt\in C^{k+1,\sigma}(B_r^+).
\end{equation}
Eventually, this last information together with \eqref{xireg} would give $w\in C^{k+2,\sigma}(B_r^+)$. In fact, isolating the term $w_{nn}:=\partial^2_{nn}w$ in the left hand side of \eqref{eqphi}, one gets the desired regularity also for this last derivative from the equation once one observes that the uniform ellipticity of $A$ gives the uniform bound from below
\begin{equation}\label{ellipticn}
a_{nn}=\mean{A\vec e_{n},\vec e_{n}}\geq 1/\Lambda>0.
\end{equation}
In order to prove \eqref{phireg} it is enough to remark that \eqref{xireg} and the definition of $\varphi$ immediately give $\partial_{i}\varphi\in C^{k,\sigma}(B_r^+)$ for $i=1,...,n-1$. Then, by Remark \ref{lem3} one also gets $\partial_n\varphi\in C^{k,\sigma}(B_r^+)$.
\end{proof}

\section{Higher order boundary Harnack principles in Dini type domains}\label{sec:4}

This section is devoted to the proof of Theorem \ref{thm:Dini-bdry-harnack} and Corollary \ref{schauderR(u)} as consequences of Theorem \ref{teok}.

\subsection{One side higher order boundary Harnack principle}

Let us consider two functions $u,v$ solving \eqref{BHconditions}; that is,
\begin{equation*}
\begin{cases}
-Lv=f &\mathrm{in \ }\Omega\cap B_1,\\
-Lu=g &\mathrm{in \ }\Omega\cap B_1,\\
u>0 &\mathrm{in \ }\Omega\cap B_1,\\
u=v=0, \quad \partial_{\nu} u<0&\mathrm{on \ }\partial\Omega\cap B_1.
\end{cases}
\end{equation*}
We recall that $\Omega\subset\R^n$ is a bounded domain, $n\geq2$, the operator $L$ is defined in \eqref{eq:div-operator}, the variable coefficient matrix $A=(a_{ij})_{i,j=1}^n$ is symmetric and satisfies \eqref{eq:assump-coeffi}, and $\nu$ is the outward unit normal vector to $\Omega$ on $\partial\Omega$.

Let $k\in\mathbb N$, $j\in\mathbb N\setminus\{0,1,2\}$ and $\omega$ be a $j$-Dini function as in Definition \ref{dinidef}. Let us assume that $A,f,g\in C^{k,\omega}(\Omega\cap B_1)$ and $\partial\Omega\in C^{k+1,j-Dini}$ in the sense of Definition \ref{diniboundary}.
Since our estimates are of local type, after possible rotations and dilations, we may restrict without loss of generality to parametrize $$B_1\cap\Omega=B_1\cap\{x_n>\g(x')\},$$ with $x'=(x_1,\cdots,x_{n-1})$, $\gamma(0)=0$, $\nabla_{x'}\gamma(0)=0$, $\gamma\in C^{k+1,\omega}(B_1')$, $[D^{k+1}\gamma]_{C^{0,\omega}}\leq1$ and $\omega$ is the $j$-Dini modulus of $A,f,g$.

\begin{remark}[Around the Hopf-Oleinik boundary point principle]
Let us remark here that, under our assumptions on the boundary when $k=0$, the \emph{interior $C^{1,\mathcal D}$-paraboloid condition} in \cite{ApuNaz19} is satisfied; that is,
$$\gamma(x')\leq |x'|\,\omega(|x'|)\qquad\mathrm{in \ }B_1'.$$
Then, when $g\geq0$ the condition $\partial_\nu u<0$ on $\partial\Omega$ is implied by \cite[Theorem 2.1]{ApuNaz19}.
\end{remark}

\begin{proof}[Proof of Theorem \ref{thm:Dini-bdry-harnack}]
Let us consider the local diffeomorphism which straighten the boundary $\partial\Omega$; that is,
\begin{equation}\label{diffeo}
\Phi(x',x_n)=(x',x_n+\g(x')),
\end{equation}
which is of class $C^{k+1,\omega}$. There exists a small enough $R>0$ such that
$\Phi(B_R\cap\{x_n>0\})\subseteq B_1\cap\{x_n>\g(x')\}$; that is, it is a subset of the domain where the original equation is satisfied and $\Phi(0)=\Phi^{-1}(0)=0$. Additionally, the boundary $B_R\cap\{x_n=0\}$ is mapped into the boundary $B_1\cap\{x_n=\g(x')\}$.
The Jacobian associated with $\Phi$ is given by
\begin{align*}
J_\Phi(x)=J_\Phi(x')=\left(\begin{array}{c|c}
\mathbb{I}_{n-1}&{\mathbf 0}\\\hline
(\nabla_{x'} \g(x'))^T&1
\end{array}\right), \qquad \mathrm{with}\quad |\mathrm{det} \, J_\Phi(x')|\equiv1.
\end{align*}
We remark that $\Phi$ is bi-Lipschitz continuous. Moreover, the new outward unit normal vector is
\begin{equation}\label{normaltransf}
-\vec e_n=(J_{\Phi})^T(\nu\circ\Phi)\sqrt{1+|\nabla_{x'}\g|^2}.
\end{equation}
This can be checked having that 
\begin{equation*}
\nu\circ\Phi(x',0)=\frac{(\nabla_{x'}\g(x'),-1)^T}{\sqrt{1+|\nabla_{x'}\g(x')|^2}}.
\end{equation*}
Then, up to dilations, $\tilde v=v\circ\Phi$ and $\tilde u=u\circ\Phi$ solve
\begin{equation*}
\begin{cases}
-\div\left(\tilde A\nabla \tilde v\right)=\tilde f &\mathrm{in \ }B_1^+,\\
-\div\left(\tilde A\nabla \tilde u\right)=\tilde g &\mathrm{in \ }B_1^+,\\
\tilde u>0 &\mathrm{in \ }B_1^+,\\
\tilde u=\tilde v=0, \qquad -\partial_n \tilde u<0&\mathrm{on \ }B'_1,
\end{cases}
\end{equation*}
where $\tilde f=f\circ\Phi$, $\tilde g=g\circ\Phi$ and
\begin{equation*}
\tilde A=(J_\Phi^{-1}) (A \circ\Phi) (J_\Phi^{-1})^T.
\end{equation*}

As we have already remarked, we are going to prove the desired regularity for $\tilde w=\tilde v/\tilde u$ which will follow from Theorem \ref{teok}. Then, the regularity is inherited by $w$ through composition back with the diffeomorphism $w=\tilde w\circ\Phi^{-1}$.

We remark that, after the diffeomorphism, $\tilde f,\tilde g, \tilde A$ belong to $C^{k,\omega}(B_1^+)$, and hence Corollary \ref{cor:reg:k} implies $\tilde u,\tilde v\in C^{k+1,\sigma}(B_r^+)$ for any $r<1$ where $\sigma$ is $(j-1)$-Dini if $\omega$ is $j$-Dini (in the present case we are assuming $j\ge3$ and hence $j-1\ge2$). Let us remark that $D^{\beta}u$ and $D^{\beta}v$, for any multiindex with $|\beta|=k+1$, have the same modulus of continuity $\sigma:=\sigma_u=\sigma_v$. The fact that $\tilde w=w\circ\Phi=\tilde v/\tilde u\in C^{k,\sigma}$, follows once we notice that
$$\frac{\tilde v}{\tilde u}=\frac{\tilde v}{x_n}\left(\frac{\tilde u}{x_n}\right)^{-1}.$$
Indeed, by combining that $\tilde u>0$ in $B_1^+$ with $-\partial_n \tilde u<0$ on $B'_1$, we deduce that
\begin{equation}\label{mutilde}
\frac{\tilde u}{x_n}\geq\mu>0 \quad\mathrm{up \  to}\quad \Sigma=\{x_n=0\}.
\end{equation}
Finally, the result follows since, by Lemma \ref{lem1}, the ratio is the product of two $C^{k,\sigma}$ functions. In order to prove that the ratio $\tilde w$ actually belongs to $C^{k+1}$, we use the fact that for any $0<r<1$, $\tilde w$ is solution to
\begin{equation}\label{eqdessav}
-\div\left(x_n^2\overline A\nabla \tilde w\right)=x_n^2\frac{\overline f}{x_n}\qquad\mathrm{in \ }B_r^+,
\end{equation}
where the new matrix and forcing term are
$$
\overline A=(\tilde u/x_n)^2\tilde A,\qquad \overline f=(\tilde u/x_n)\tilde f-(\tilde v/x_n)\tilde g=(\tilde u/x_n)\left(\tilde f-\tilde g\tilde w\right)
$$
which belong to $C^{k,\sigma}(B_r^+)$. Nevertheless coefficients of $\overline A$ do not vanish on $\Sigma$ and hence the matrix is still uniformly elliptic, thanks to the non degeneracy condition \eqref{mutilde}. In order to prove that $\tilde w$ is actually a weak solution of the weighted equation above, we just need to prove that it belongs to the weighted Sobolev space $H^{1,2}(B_r^+)=H^{1}(B_r^+,x_n^2dx)$. In fact, due to the strong degeneracy of the weight $x_n^2$ we already know that the weak formulation is done just testing the equation with any $\phi\in C^{\infty}_0(B_r^+)$; that is, smooth function with compact support away from $\Sigma$, and the equation is satisfied in a classic sense in compact subsets of $B_1^+$. Then, to prove that $\tilde w\in H^{1,2}(B_r^+)$ we reason as in \cite[Proposition 3.5]{TerTorVit22}. We would like to stress the fact that the Meyers-Serrin theorem $(H=W)$ is not known in this weighted setting \cite[Remark 2.3]{SirTerVit21a}; that is, a function with finite $H^{1,2}$-norm does not necessarily belong to the relevant Sobolev space. Hence, we need to prove that as $\delta\to0^+$ there exist functions $w_\delta\in C^{\infty}_0(\overline{B_r^+}\setminus\Sigma)$ converging to $\tilde w$ in $H^{1,2}$-norm. This is done in the following way: given a radially decreasing cut off function $\eta\in C^\infty_0(B_1)$ with $\eta\equiv 1$ in $B_r$ and $0\leq\eta\leq 1$, let us define $\varphi_\delta(x)=\eta(x) f_\delta(x_n)$ where $f_\delta$ is defined as
\begin{equation}\label{fdelta}
f_\delta(t)=\begin{cases}
0 &\mathrm{if} \  |t|\leq\delta^2\\
\log(|t|/\delta^2)/\log(1/\delta) &\mathrm{if} \ \delta^2< |t|<\delta\\
1 &\mathrm{if} \ |t|\geq\delta.
\end{cases}
\end{equation}
One can prove that $w_\delta=\varphi_\delta \tilde w$ are uniformly bounded in $H^{1,2}(B_1^+)$ with respect to $\delta$, and they converge to $\tilde w$ in $B_r^+$. Since the approximations have compact support away from $\Sigma$, it is enough to prove the finiteness of the weighted norm in order to deduce a uniform bound in $\delta$ in the Sobolev norm and this is done by applying the Hardy inequality contained in \cite[Lemma B.1]{SirTerVit21b}; that is,
$$
\frac{1}{4}\int_{B_1^+}\frac{\varphi^2}{x_n^2}\leq\int_{B_1^+}|\nabla\varphi|^2\qquad\forall \varphi\in H^1_0(B_1^+\cup\partial^+B_1^+),
$$
to $\varphi=\tilde v$.
Then, one can regularize the function $w_\delta$ by convolution with a standard family of mollifiers in order to get a sequence which is also smooth.

In order to apply our regularity theory for degenerate equations, notice that \eqref{eqdessav} can be rewritten as
\begin{equation*}
-\div\left(x_n^2\overline A\nabla \tilde w\right)=\div(x_n^2\bg),
\end{equation*}
with
$$\bg(x',x_n)=\frac{\vec e_n}{x_n^2}\int_0^{x_n}t\overline f(x',t)dt$$
which belongs to $C^{k,\sigma}$ thanks to Lemma \ref{lem2}, and having $\mean{\bg(x',0),\vec e_n}=\overline f(x',0)/2$.
Hence Theorem \ref{teok} implies the desired regularity; that is, $\tilde w\in C^{k+1,\overline\sigma}$ with $\overline\sigma$ a $(j-3)$-Dini function. Moreover, \eqref{Neumann} implies the boundary condition
\begin{equation*}
2\mean{\overline A\nabla\tilde w,\vec e_n}+\overline f=0\qquad\mathrm{on \ }\Sigma.
\end{equation*}
Eventually, by \eqref{normaltransf}, one obtains the validity of the boundary condition \eqref{boundaryHOBH}, see the proof of \cite[Theorem 2.10]{TerTorVit22} for further details.
\end{proof}

\subsection{Boundary Harnack principle on nodal domains}

Let us consider now two $L$-harmonic functions $u,v$ in $B_1$; that is, solving 
\begin{equation}\label{Lharmonic}
Lu=Lv=0\qquad\mathrm{in \ }B_1.
\end{equation}
We recall that $n\geq2$, the operator $L$ is defined in \eqref{eq:div-operator}, the variable coefficient matrix $A=(a_{ij})_{i,j=1}^n$ is symmetric and satisfies \eqref{eq:assump-coeffi} in $B_1$. Let $k\in\mathbb N$, $j\in\mathbb N\setminus\{0,1,2\}$ and $\omega$ be a $j$-Dini function as in Definition \ref{dinidef}. Let us assume that $A\in C^{k,\omega}(B_1)$.

Then we assume that the two solutions share their nodal sets; that is, $Z(u)\subseteq Z(v)$. Moreover, the nodal set $Z(u):=u^{-1}(\{0\})$ can be divided into a regular and a singular part
$$R(u)=\{x\in Z(u) \, : \, |\nabla u(x)|\neq0\},\qquad S(u)=\{x\in Z(u) \, : \, |\nabla u(x)|=0\}.$$
Here, we assume that $S(u)\cap B_1=\emptyset$, and since we are interested in local estimates across $R(u)$, we can localize the problem around a given regular point. Then the regularity across $R(u)$ is obtained by a standard covering argument. Hence, up to further dilations, rotations and translations we can suppose that
$$B_1=B_1\cap\left(\Omega^+\cup\Omega^-\cup\Gamma\right),$$
where the three sets above are respectively, the epigraph, the hypograph and the graph of a function $\gamma:B'_1\to\mathbb R$
$$\Omega^+=\{x_n>\gamma(x')\},\qquad \Omega^-=\{x_n<\gamma(x')\},\qquad \Gamma=\{x_n=\gamma(x')\}.$$
In other words, we are localizing at a regular point $0\in Z(u)$ around which the nodal set is regular and locally described as a graph of a regular function. By Dini's implicit function theorem, since $u$ belongs to $C^{k+1,\sigma}_\loc(B_1)$, then $\gamma\in C^{k+1,\sigma}_\loc(B'_1)$, and we can assume that $\gamma(0)=0$, $\nabla_{x'}\gamma(0)=0$ and $[D^{k+1}\gamma]_{C^{0,\sigma}}\leq1$. Moreover, by the interior counterpart of Corollary \ref{cor:reg:k}, $\sigma$ is $(j-1)$-Dini.

Let us define $\nu^+$ the outward unit normal vector to $\Omega^+$ on $\Gamma$ and $\nu^-=-\nu^+$ the outward unit normal vector to $\Omega^-$ on $\Gamma$.

\begin{proof}[Proof of Corollary \ref{schauderR(u)}]
The proof follows the same strategies developed in \cite[Section 3.3.2]{TerTorVit22}. First, the diffeomorphism $\Phi$ in \eqref{diffeo} allows to work on
$$B_1=B_1^+\cup B_1^-\cup B'_1,$$
after possible dilations. Let us remark that such a diffeomorphism is locally defined from both sides of the interface and is of class $C^{k+1,\sigma}$ in the full ball. Moreover, by defining 
$\tilde A=(J_\Phi^{-1}) (A \circ\Phi) (J_\Phi^{-1})^T\in C^{k,\sigma}(B_1)$,
one gets that $\tilde u=u\circ\Phi$ is solution to
$$\div(\tilde A\nabla\tilde v)=0\qquad\mathrm{in \ }B_1.$$
Although by Corollary \ref{cor:reg:k} solutions to the previous equation are in general $C^{k+1,\tilde\sigma}_\loc(B_1)$ with $\tilde\sigma$ a $(j-2)$-Dini function, by composition of regular functions one can ensure more; that is, $\tilde u\in C^{k+1,\sigma}(B_1)$. Hence, assuming without loss of generality that $\tilde u>0$ in $B_1^+$ and $\tilde u<0$ in $B_1^-$, the Hopf-Oleinik lemma in \cite[Theorem 2.1]{ApuNaz19} applied on both half balls give a nondegeneracy condition
$$\frac{1}{C}\leq \frac{\tilde u}{x_n} \leq C\qquad \mathrm{in \ }B_1,$$
with $\tilde u/x_n\in C^{k,\sigma}(B_1)$ by Lemma \ref{lem1}. Then one can apply the regularity result in Theorem \ref{teok} on $\tilde w=\tilde v/\tilde u$ on both half balls, getting for any $0<r<1$ that $\tilde w\in C^{k+1,\overline\sigma}(B_r^+)$ and $\tilde w\in C^{k+1,\overline\sigma}(B_r^-)$ with $\overline\sigma$ a $(j-3)$-Dini function. This is due to the fact that $\tilde w$ solves
\begin{equation*}
\div(x_n^2\overline A\nabla\tilde w)=0\qquad\mathrm{in \ }B_1,
\end{equation*}
with $\overline A=(\tilde u/x_n)^2\tilde A\in C^{k,\sigma}$, uniformly elliptic and symmetric. Hence, it remains to glue together the estimates from both sides. This is done by applying \cite[Lemma 2.11]{TerTorVit22} with the relevant changes for the Dini context once one proves continuity of $\tilde w$ across $\Sigma$. The latter fact, follows by $C^1$ regularity of $\tilde u,\tilde v$ across $\Sigma$. In fact, given $x_0=(x'_0,0)$ in $B'_1$
\begin{eqnarray*}
\lim_{t\to0^+}\tilde w(x'_0,t)&=&\lim_{t\to0^+}\frac{\tilde v(x'_0,t)-\tilde v(x'_0,0)}{\tilde u(x'_0,t)-\tilde u(x'_0,0)}\\
&=&\lim_{t\to0^+}\frac{\tilde v(x'_0,t)-\tilde v(x'_0,0)}{t}\left(\lim_{t\to0^+}\frac{\tilde u(x'_0,t)-\tilde u(x'_0,0)}{t}\right)^{-1}\\
&=&\frac{\partial_n\tilde v(x'_0,0)}{\partial_n\tilde u(x'_0,0)},
\end{eqnarray*}
and the same result holds considering the limit from below $\lim_{t\to0^-}\tilde w(x'_0,t)$.
\end{proof}

\section*{Acknowledgement}
The authors would like to thank Jacopo Somaglia for fruitful conversations on Remark 1.2 in \cite{ApuNaz16, ApuNaz22} and Gabriele Cora for discussions on weighted Poincaré and Hardy inequalities. S.V. is research fellow of Istituto Nazionale di Alta Matematica INDAM group GNAMPA and supported by the MUR funding for Young Researchers - Seal of Excellence (ID: SOE\_0000194. Acronym: ADE. Project Title: Anomalous diffusion equations: regularity and geometric properties of solutions and free boundaries). This project was initiated while the authors stayed at Institut Mittag-Leffler during the program \emph{Geometric Aspects of Nonlinear Partial Differential Equations.}

\section*{Declaration}

Funding and/or Conflicts of interests: The authors declare that there are no financial or non-financial conflict of interests.


\begin{bibdiv}
\begin{biblist}

\bib{AllSha19}{article}{
   author={Allen, Mark},
   author={Shahgholian, Henrik},
   title={A new boundary Harnack principle (equations with right hand side)},
   journal={Arch. Ration. Mech. Anal.},
   volume={234},
   date={2019},
   number={3},
   pages={1413--1444},
   issn={0003-9527},
   review={\MR{4011700}},
   doi={10.1007/s00205-019-01415-3},
}

\bib{Anc78}{article}{
   author={Ancona, Alano},
   title={Principe de Harnack \`a la fronti\`ere et th\'{e}or\`eme de Fatou pour un
   op\'{e}rateur elliptique dans un domaine lipschitzien},
   language={French, with English summary},
   journal={Ann. Inst. Fourier (Grenoble)},
   volume={28},
   date={1978},
   number={4},
   pages={169--213, x},
   issn={0373-0956},
   review={\MR{513885}},
}

\bib{ApuNaz16}{article}{
   author={Apushkinskaya, Darya E.},
   author={Nazarov, Alexander I.},
   title={A counterexample to the Hopf-Oleinik lemma (elliptic case)},
   journal={Anal. PDE},
   volume={9},
   date={2016},
   number={2},
   pages={439--458},
   issn={2157-5045},
   review={\MR{3513140}},
   doi={10.2140/apde.2016.9.439},
   }

\bib{ApuNaz19}{article}{
   author={Apushkinskaya, Darya E.},
   author={Nazarov, Alexander I.},
   title={On the boundary point principle for divergence-type equations},
   journal={Atti Accad. Naz. Lincei Rend. Lincei Mat. Appl.},
   volume={30},
   date={2019},
   number={4},
   pages={677--699},
   issn={1120-6330},
   review={\MR{4030346}},
   doi={10.4171/RLM/867},
   }

\bib{ApuNaz22}{article}{
   author={Apushkinskaya, Darya E.},
   author={Nazarov, Alexander I.},
   title={The normal derivative lemma and surrounding issues},
   language={Russian, with Russian summary},
   journal={Uspekhi Mat. Nauk},
   volume={77},
   date={2022},
   number={2 (464)},
   pages={3--68},
   issn={0042-1316},
   translation={
      journal={Russian Math. Surveys},
      volume={77},
      date={2022},
      number={2},
      pages={189--249},
      issn={0036-0279},
   },
   review={\MR{4461367}},
   doi={10.4213/rm10049},
}

\bib{BanGar16}{article}{
   author={Banerjee, Agnid},
   author={Garofalo, Nicola},
   title={A parabolic analogue of the higher-order comparison theorem of De
   Silva and Savin},
   journal={J. Differential Equations},
   volume={260},
   date={2016},
   number={2},
   pages={1801--1829},
   issn={0022-0396},
   review={\MR{3419746}},
   doi={10.1016/j.jde.2015.09.044},
}

\bib{BanBasBur91}{article}{
   author={Ba\~{n}uelos, Rodrigo},
   author={Bass, Richard F.},
   author={Burdzy, Krzysztof},
   title={H\"{o}lder domains and the boundary Harnack principle},
   journal={Duke Math. J.},
   volume={64},
   date={1991},
   number={1},
   pages={195--200},
   issn={0012-7094},
   review={\MR{1131398}},
   doi={10.1215/S0012-7094-91-06408-2},
}

\bib{BasBur91}{article}{
   author={Bass, Richard F.},
   author={Burdzy, Krzysztof},
   title={A boundary Harnack principle in twisted H\"{o}lder domains},
   journal={Ann. of Math. (2)},
   volume={134},
   date={1991},
   number={2},
   pages={253--276},
   issn={0003-486X},
   review={\MR{1127476}},
   doi={10.2307/2944347},
}

\bib{BasBur94}{article}{
   author={Bass, Richard F.},
   author={Burdzy, Krzysztof},
   title={The boundary Harnack principle for nondivergence form elliptic
   operators},
   journal={J. London Math. Soc. (2)},
   volume={50},
   date={1994},
   number={1},
   pages={157--169},
   issn={0024-6107},
   review={\MR{1277760}},
   doi={10.1112/jlms/50.1.157},
}

\bib{CafFabMorSal81}{article}{
   author={Caffarelli, Luis},
   author={Fabes, Eugene B.},
   author={Mortola, Stefano},
   author={Salsa, Sandro},
   title={Boundary behavior of nonnegative solutions of elliptic operators
   in divergence form},
   journal={Indiana Univ. Math. J.},
   volume={30},
   date={1981},
   number={4},
   pages={621--640},
   issn={0022-2518},
   review={\MR{620271}},
   doi={10.1512/iumj.1981.30.30049},
}

\bib{CafSil07}{article}{
   author={Caffarelli, Luis},
   author={Silvestre, Luis},
   title={An extension problem related to the fractional Laplacian},
   journal={Comm. Partial Differential Equations},
   volume={32},
   date={2007},
   number={7-9},
   pages={1245--1260},
   issn={0360-5302},
   review={\MR{2354493}},
   doi={10.1080/03605300600987306},
}

%

\bib{Dal77}{article}{
   author={Dahlberg, Bj\"{o}rn E. J.},
   title={Estimates of harmonic measure},
   journal={Arch. Rational Mech. Anal.},
   volume={65},
   date={1977},
   number={3},
   pages={275--288},
   issn={0003-9527},
   review={\MR{466593}},
   doi={10.1007/BF00280445},
}

\bib{DeSSav15}{article}{
   author={De Silva, Daniela},
   author={Savin, Ovidiu},
   title={A note on higher regularity boundary {H}arnack inequality},
   journal={Discrete Contin. Dyn. Syst.},
   volume={35},
   date={2015},
   number={12},
   pages={6155--6163},
   issn={1078-0947},
   review={\MR{3393271}},
   doi={10.3934/dcds.2015.35.6155},
   }

\bib{DeSSav16}{article}{
   author={De Silva, Daniela},
   author={Savin, Ovidiu},
   title={Boundary Harnack estimates in slit domains and applications to
   thin free boundary problems},
   journal={Rev. Mat. Iberoam.},
   volume={32},
   date={2016},
   number={3},
   pages={891--912},
   issn={0213-2230},
   review={\MR{3556055}},
   doi={10.4171/RMI/902},
}

\bib{DeSSav20}{article}{
   author={De Silva, Daniela},
   author={Savin, Ovidiu},
   title={A short proof of boundary Harnack principle},
   journal={J. Differential Equations},
   volume={269},
   date={2020},
   number={3},
   pages={2419--2429},
   issn={0022-0396},
   review={\MR{4093736}},
   doi={10.1016/j.jde.2020.02.004},
}

\bib{DeSSav22a}{article}{
   author={De Silva, Daniela},
   author={Savin, Ovidiu},
   title={On the boundary Harnack principle in H\"{o}lder domains},
   journal={Math. Eng.},
   volume={4},
   date={2022},
   number={1},
   pages={Paper No. 004, 12},
   review={\MR{4245993}},
   doi={10.3934/mine.2022004},
}

\bib{DeSSav22b}{article}{
   author={De Silva, Daniela},
   author={Savin, Ovidiu},
   title={On the parabolic boundary Harnack principle},
   journal={Matematica},
   volume={1},
   date={2022},
   number={1},
   pages={1--18},
   review={\MR{4482724}},
   doi={10.1007/s44007-021-00001-y},
}

\bib{Don12}{article}{
   author={Dong, Hongjie},
   title={Gradient estimates for parabolic and elliptic systems from linear laminates},
   journal={Arch. Ration. Mech. Anal.},
   volume={205},
   date={2012},
   number={1},
   pages={119--149},
   issn={0003-9527},
   review={\MR{2927619}},
   doi={10.1007/s00205-012-0501-z},
   }

\bib{DonEscKim18}{article}{
   author={Dong, Hongjie},
   author={Escauriaza, Luis},
   author={Kim, Seick},
   title={On {$C^1$}, {$C^2$}, and weak type-{$(1,1)$} estimates for linear elliptic operators: part {II}},
   journal={Math. Ann.},
   volume={370},
   date={2018},
   number={1-2},
   pages={447--489},
   issn={0025-5831},
   review={\MR{3747493}},
   doi={10.1007/s00208-017-1603-6},
   }

\bib{DonKim17}{article}{
   author={Dong, Hongjie},
   author={Kim, Seick},
   title={On {$C^1$}, {$C^2$}, and weak type-{$(1,1)$} estimates for linear elliptic operators},
   journal={Comm. Partial Differential Equations},
   volume={42},
   date={2017},
   number={3},
   pages={417--435},
   issn={0360-5302},
   review={\MR{3620893}},
   doi={10.1080/03605302.2017.1278773},
   }
   
   \bib{DonLeeKim20}{article}{
   author={Dong, Hongjie},
   author={Lee, Jihoon},
   author={Kim, Seick},
   title={On conormal and oblique derivative problem for elliptic equations
   with Dini mean oscillation coefficients},
   journal={Indiana Univ. Math. J.},
   volume={69},
   date={2020},
   number={6},
   pages={1815--1853},
   issn={0022-2518},
   review={\MR{4170081}},
   doi={10.1512/iumj.2020.69.8028},
}

\bib{DonPha20}{article}{
   author={Dong, Hongjie},
   author={Phan, Tuoc},
   title={On parabolic and elliptic equations with singular or degenerate coefficients},
   pages={32pp},
   date={2020},
   status={	arXiv:2007.04385 preprint},
 }

\bib{FabGarMarSal88}{article}{
   author={Fabes, Eugene B.},
   author={Garofalo, Nicola},
   author={Mar\'{\i}n-Malave, Santiago},
   author={Salsa, Sandro},
   title={Fatou theorems for some nonlinear elliptic equations},
   journal={Rev. Mat. Iberoamericana},
   volume={4},
   date={1988},
   number={2},
   pages={227--251},
   issn={0213-2230},
   review={\MR{1028741}},
   doi={10.4171/RMI/73},
}

\bib{FabKenSer82}{article}{
   author={Fabes, Eugene B.},
   author={Kenig, Carlos E.},
   author={Serapioni, Raul P.},
   title={The local regularity of solutions of degenerate elliptic
   equations},
   journal={Comm. Partial Differential Equations},
   volume={7},
   date={1982},
   number={1},
   pages={77--116},
   issn={0360-5302},
   review={\MR{643158}},
   doi={10.1080/03605308208820218},
}

\bib{Fer98}{article}{
   author={Ferrari, Fausto},
   title={On boundary behavior of harmonic functions in H\"{o}lder domains},
   journal={J. Fourier Anal. Appl.},
   volume={4},
   date={1998},
   number={4-5},
   pages={447--461},
   issn={1069-5869},
   review={\MR{1658624}},
   doi={10.1007/BF02498219},
}

\bib{GilTru01}{book}{
   author={Gilbarg, David},
   author={Trudinger, Neil S.},
   title={Elliptic partial differential equations of second order},
   series={Classics in Mathematics},
   publisher={Springer-Verlag, Berlin},
   date={2001},
   pages={xiv+517},
   isbn={3-540-41160-7},
   review={\MR{1814364}},
}

\bib{HwaKim20}{article}{
   author={Hwang, Sukjung},
   author={Kim, Seick},
   title={Green's function for second order elliptic equations in non-divergence form},
   journal={Potential Anal.},
   volume={52},
   date={2020},
   number={1},
   pages={27--39},
   issn={0926-2601},
   review={\MR{4056785}},
   doi={10.1007/s11118-018-9729-z},
}

\bib{JerKen82}{article}{
   author={Jerison, David S.},
   author={Kenig, Carlos E.},
   title={Boundary behavior of harmonic functions in nontangentially
   accessible domains},
   journal={Adv. in Math.},
   volume={46},
   date={1982},
   number={1},
   pages={80--147},
   issn={0001-8708},
   review={\MR{676988}},
   doi={10.1016/0001-8708(82)90055-X},
}

\bib{Kem72}{article}{
   author={Kemper, John T.},
   title={A boundary Harnack principle for Lipschitz domains and the
   principle of positive singularities},
   journal={Comm. Pure Appl. Math.},
   volume={25},
   date={1972},
   pages={247--255},
   issn={0010-3640},
   review={\MR{293114}},
   doi={10.1002/cpa.3160250303},
}

\bib{Kuk22}{article}{
   author={Kukuljan, Teo},
   title={Higher order parabolic boundary Harnack inequality in $C^1$ and
   $C^{k,\alpha}$ domains},
   journal={Discrete Contin. Dyn. Syst.},
   volume={42},
   date={2022},
   number={6},
   pages={2667--2698},
   issn={1078-0947},
   review={\MR{4421508}},
   doi={10.3934/dcds.2021207},
}

\bib{LaMLeoSch20}{article}{
   author={La Manna, Domenico Angelo},
   author={Leone, Chiara},
   author={Schiattarella, Roberta},
   title={On the regularity of very weak solutions for linear elliptic
   equations in divergence form},
   journal={NoDEA Nonlinear Differential Equations Appl.},
   volume={27},
   date={2020},
   number={5},
   pages={Paper No. 43, 23},
   issn={1021-9722},
   review={\MR{4130239}},
   doi={10.1007/s00030-020-00646-8},
}

\bib{LinLin22}{article}{
   author={Lin, Fanghua},
   author={Lin, Zhengjiang},
   title={Boundary Harnack principle on nodal domains},
   journal={Sci. China Math.},
   volume={65},
   date={2022},
   number={12},
   pages={2441--2458},
   issn={1674-7283},
   review={\MR{4514975}},
   doi={10.1007/s11425-022-2016-3},
}

\bib{LogMal15}{article}{
   author={Logunov, Alexander},
   author={Malinnikova, Eugenia},
   title={On ratios of harmonic functions},
   journal={Adv. Math.},
   volume={274},
   date={2015},
   pages={241--262},
   issn={0001-8708},
   review={\MR{3318150}},
   doi={10.1016/j.aim.2015.01.009},
}

\bib{LogMal16}{article}{
   author={Logunov, Alexander},
   author={Malinnikova, Eugenia},
   title={Ratios of harmonic functions with the same zero set},
   journal={Geom. Funct. Anal.},
   volume={26},
   date={2016},
   number={3},
   pages={909--925},
   issn={1016-443X},
   review={\MR{3540456}},
   doi={10.1007/s00039-016-0369-4},
}

\bib{TorLat21}{article}{
   author={Ros-Oton, Xavier},
   author={Torres-Latorre, Dami\`a},
   title={New boundary Harnack inequalities with right hand side},
   journal={J. Differential Equations},
   volume={288},
   date={2021},
   pages={204--249},
   issn={0022-0396},
   review={\MR{4246154}},
   doi={10.1016/j.jde.2021.04.012},
}

\bib{SirTerVit21a}{article}{
   author={Sire, Yannick},
   author={Terracini, Susanna},
   author={Vita, Stefano},
   title={Liouville type theorems and regularity of solutions to degenerate
   or singular problems part I: even solutions},
   journal={Comm. Partial Differential Equations},
   volume={46},
   date={2021},
   number={2},
   pages={310--361},
   issn={0360-5302},
   review={\MR{4207950}},
   doi={10.1080/03605302.2020.1840586},
}

 \bib{SirTerVit21b}{article}{
   author={Sire, Yannick},
   author={Terracini, Susanna},
   author={Vita, Stefano},
   title={Liouville type theorems and regularity of solutions to degenerate
   or singular problems part II: odd solutions},
   journal={Math. Eng.},
   volume={3},
   date={2021},
   number={1},
   pages={Paper No. 5, 50},
   review={\MR{4144100}},
   doi={10.3934/mine.2021005},
}

\bib{TerTorVit22}{article}{
   author={Terracini, Susanna},
   author={Tortone, Giorgio},
   author={Vita, Stefano},
   title={Higher order boundary Harnack principle via degenerate equations},
   pages={35pp},
   date={2022},
   status={	arXiv:2301.00227 preprint},
 }

  \bib{Vit22}{article}{
   author={Vita, Stefano},
   title={Boundary regularity estimates in H\"{o}lder spaces with variable
   exponent},
   journal={Calc. Var. Partial Differential Equations},
   volume={61},
   date={2022},
   number={5},
   pages={Paper No. 166, 31},
   issn={0944-2669},
   review={\MR{4444179}},
   doi={10.1007/s00526-022-02274-9},
}

 \bib{Wu78}{article}{
   author={Wu, Jang Mei G.},
   title={Comparisons of kernel functions, boundary Harnack principle and
   relative Fatou theorem on Lipschitz domains},
   language={English, with French summary},
   journal={Ann. Inst. Fourier (Grenoble)},
   volume={28},
   date={1978},
   number={4},
   pages={147--167, vi},
   issn={0373-0956},
   review={\MR{513884}},
}

\bib{Zha23}{article}{
   author={Zhang, Chilin},
   title={On higher order boundary Harnack and analyticity of free boundaries},
   pages={20pp},
   date={2023},
   status={	arXiv:2303.04182 preprint},
 }


\end{biblist}
\end{bibdiv}
\end{document}